\theoremstyle{thmstyleone}%
\newtheorem{thm}{Theorem}[section]%  meant for continuous numbers
\theoremstyle{thmstyletwo}%
\theoremstyle{thmstylethree}%
\newtheorem{lem}{Lemma}[section]
\newtheorem{cor}{Corollary}[section]
\numberwithin{equation}{section}
\begin{document}

\title[Article Title]{Proof and generalization of conjectures of Ramanujan Machine}

%%=============================================================%%
%% Prefix	-> \pfx{Dr}
%% GivenName	-> \fnm{Joergen W.}
%% Particle	-> \spfx{van der} -> surname prefix
%% FamilyName	-> \sur{Ploeg}
%% Suffix	-> \sfx{IV}
%% NatureName	-> \tanm{Poet Laureate} -> Title after name
%% Degrees	-> \dgr{MSc, PhD}
%% \author*[1,2]{\pfx{Dr} \fnm{Joergen W.} \spfx{van der} \sur{Ploeg} \sfx{IV} \tanm{Poet Laureate} 
%%                 \dgr{MSc, PhD}}\email{iauthor@gmail.com}
%%=============================================================%%

\author*[1]{\fnm{Shuma} \sur{Yamamoto}}\email{yama.shu2006@gmail.com}

\affil*[1]{\orgname{Tokyo Metropolitan Nishi high school}, \city{Tokyo}, \country{Japan}}

%%==================================%%
%% sample for unstructured abstract %%
%%==================================%%

\abstract{The Ramanujan Machine project predicts new continued fraction representations of numbers expressed by important mathematical constants. Generally, the value of a continued fraction is found by reducing it to a second order linear difference equation. In this paper, we prove 38 conjectures by solving the equation in two ways, use of a differential equation or application of Petkov\v{s}ek's algorithm. Especially, in the former way, we can get strong generalization of 31 conjectures.}

\keywords{continued fractions, Ramanujan Machine, difference equations, differential equations, Euler operators, hypergeometric functions}

\pacs[MSC Classification]{30B70, 39A06}

\maketitle

\section{Introduction}\label{sec1}
The Ramanujan Machine project\cite{RM0,RM1,RM2} predicts new continued fraction representations of numbers expressed by important mathematical constants; such as $\pi^2$, $\log(2)$, special values of the zeta function and Catalan's constant $G$. Let us represent continued fractions as 
$$
\text{CF}[a_n,b_n]=a_0+\cfrac{b_1}{a_1+\cfrac{b_2}{a_2+\ddots}}.
$$
In all the Ramanujan Machine's conjectures, $a_n$ and $b_n$ are polynomial functions of $n$ and satisfy $\deg(b_n)=2\deg(a_n)$. The conjectures are about the nontrivial value of the specific $\textup{CF}[a_n,b_n]$. The continued fractions of this type are closely related to proofs of the irrationality of mathematical constants. For instance, Ap\'{e}ry's proof of the irrationality of $\zeta(2)$ and $\zeta(3)$ essentially uses 
$$\frac{5}{\zeta(2)}=\textup{CF}\left[11n^2+11n+3,n^4\right]$$
and 
$$\frac{6}{\zeta(3)}=\textup{CF}\left[(2n+1)(17n^2+17n+5),-n^6\right]$$
respectively~\cite{AperyF,AperyE}. 
In order to explain our method, we first recall the following well-known fact.
\begin{lem}\label{l1.1}
Assume that $A_n$ and $B_n$ satisfy
\begin{align}
\left\{\begin{array}{l}
A_0=1,A_1=a_0,A_{n+1}=a_nA_n+b_nA_{n-1}, \\
B_0=0,B_1=1,\;\:B_{n+1}=a_nB_n+b_nB_{n-1}. \label{e1.1}
\end{array}
\right.
\end{align}
Then,

$$\textup{CF}[a_n,b_n]=\lim_{n\to\infty}\frac{A_n}{B_n}.$$
\end{lem}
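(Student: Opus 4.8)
The plan is to show that the finite truncations (convergents) of $\textup{CF}[a_n,b_n]$ are exactly the ratios $A_n/B_n$ up to an index shift, after which the claim follows by passing to the limit. First I would introduce an auxiliary variable $x$ and define, for each $n\ge 1$,
$$
g_n(x)=a_0+\cfrac{b_1}{a_1+\cfrac{b_2}{\ddots+\cfrac{b_n}{a_n+x}}},
$$
the continued fraction truncated at level $n$ but with a free quantity $x$ adjoined to its innermost denominator. Setting $x=0$ recovers the ordinary $n$-th convergent, so it suffices to understand $g_n$.

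The heart of the argument is the identity
$$
g_n(x)=\frac{A_{n+1}+xA_n}{B_{n+1}+xB_n},
$$
which I would prove by induction on $n$. The base case $n=1$ is a direct check using $A_1=a_0$, $A_2=a_0a_1+b_1$, $B_1=1$, $B_2=a_1$. For the inductive step the key observation is that $g_{n+1}$ is obtained from $g_n$ by substituting $x\mapsto b_{n+1}/(a_{n+1}+x)$ into the innermost slot; applying the inductive hypothesis and clearing the resulting compound fraction by $(a_{n+1}+x)$ turns the numerator into $a_{n+1}A_{n+1}+b_{n+1}A_n+xA_{n+1}$ and the denominator into $a_{n+1}B_{n+1}+b_{n+1}B_n+xB_{n+1}$. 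The recurrences in \eqref{e1.1} collapse these to $A_{n+2}+xA_{n+1}$ and $B_{n+2}+xB_{n+1}$, which is precisely the asserted identity at level $n+1$. Conceptually this is nothing but the statement that each layer of the fraction is a M\"obius transformation and that composing successive layers multiplies their associated $2\times 2$ matrices, with the recurrence \eqref{e1.1} recording that matrix product column by column.

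With the identity in hand, putting $x=0$ yields that the $n$-th convergent equals $A_{n+1}/B_{n+1}$. Since $\textup{CF}[a_n,b_n]$ is by definition the limit of its convergents, I would conclude
$$
\textup{CF}[a_n,b_n]=\lim_{n\to\infty}\frac{A_{n+1}}{B_{n+1}}=\lim_{n\to\infty}\frac{A_n}{B_n}.
$$

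I expect the only genuine obstacle to be bookkeeping: keeping the index shift between the convergents and the pair $(A_n,B_n)$ consistent throughout, and verifying that the substitution in the inductive step truly reproduces the level-$(n+1)$ fraction rather than an off-by-one variant. The algebra telescopes immediately once the recurrences are invoked, so no convergence estimate is needed here; the existence of the limit is already subsumed in the meaning of $\textup{CF}[a_n,b_n]$.
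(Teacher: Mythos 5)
Your proof is correct. Note that the paper itself offers no proof of this lemma at all --- it is recalled as a ``well-known fact'' --- so there is nothing to compare against; your induction on the identity $g_n(x)=\dfrac{A_{n+1}+xA_n}{B_{n+1}+xB_n}$ is precisely the standard Wallis--Euler argument (each layer acting as a M\"obius transformation, with \eqref{e1.1} recording the matrix product), and it correctly establishes that the convergents are $A_{n+1}/B_{n+1}$, after which the claim follows from the definition of the value of the continued fraction as the limit of its convergents.
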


Notice that $A_n$ and $B_n$ satisfy the same difference equation
\begin{equation}
    y_{n+1}=a_ny_n+b_ny_{n-1},\qquad n=1,2,3,\cdots. \label{e1.2}
\end{equation}
It is obvious that (\upshape\ref{e1.2}) can be easily solved when $\deg(a_n)=\deg(b_n)=0$. Actually, (\upshape\ref{e1.2}) can also be solved when $\deg(a_n)=1,\deg(b_n)=2$ because the exponential generating function $\sum_{n=0
}^\infty y_n\frac{x^n}{n!}$ of the solution of (\upshape\ref{e1.2}) satisfies a differential equation which comes down to the hypergeometric or confluent hypergeometric differential equation by a simple change of the independent variable $x$. In this paper, by considering the case $\deg(a_n)=2,\deg(b_n)=4$ in a similar way, we derive the following formula (Theorem~{\upshape\ref{t2.1}}) and prove Ramanujan Machine's 31 conjectures as its corollaries:
\begin{equation}
    \textup{CF}\left[\Delta\Bigl(n(n+\alpha)(n+\beta)\Bigr),-2n(n+\alpha)(n+\beta)(n+\gamma)\right]=4\left\{\sum_{n=0}^\infty \frac{\bigl(\frac{1}{2}\bigr)_n(\gamma+1)_n}{\bigl(\frac{\alpha+1}{2}\bigr)_{n+1}\bigl(\frac{\beta+1}{2}\bigr)_{n+1}}\right\}^{-1}, \label{e1.3}
\end{equation} where $\Delta$ is a difference operator defined by $\Delta f_n=f_{n+1}-f_n$. Furthermore, even if the series diverges, this formula holds due to its analytic cotinuation.(Lemma~{\upshape\ref{l2.3}}, Theorem~{\upshape\ref{t2.2}})

Alternatively, we can find the value of the continued fraction just by finding a special solution of (\upshape\ref{e1.2}) because the continued fraction can be expressed in the form of a series using $y_n$ (see Lemma~{\upshape\ref{l3.1}} below). We can sometimes obtain the special solution of (\upshape\ref{e1.2}) by applying Petkov\v{s}ek's algorithm~\cite{A=B}, which finds the solution of a given linear difference equation with polynomial coefficients under the assumption that the solution $y_n$ is hypergeometric, i.e. $y_n/y_{n-1}$ is a rational function of $n$. In fact, we can prove seven more conjectures by using this method. 

\section{Proof and Generalization of the conjectures for $\pi^2$, $\log(2)$ and Catalan's constant}
In what follows, we define $\sum_{k=0}^{-1}f_k=0$ and $\prod_{i=1}^0f_i=1$ for arbitrary sequence $f_n$ as convention. Moreover, in this section, $a_n$ and $b_n$ are denoted by $a(n)$ and $b(n)$ respectively, and we assume that both are polynomials satisfying $\deg(b)=2\deg(a)$.

In the case where $\deg(a)=1$, let $y_n$ be a sequence satisfying (\upshape\ref{e1.2}) and $y=y(x)$ be the generating function defined by
$$y=\sum_{n=0}^\infty y_n\frac{x^n}{n!}.$$
Then a differential equation that $y$ satisfies is given by using the differential operator $D_x=\frac{d}{dx}$ and the Euler operator $\vartheta_x=x\frac{d}{dx}$ as follows:
$$D_x^2y=a(\vartheta_x+1)D_xy+b(\vartheta_x+1)y.$$
In what follows, when the variable is obvious, the two operators will be denoted by $D$ and $\vartheta$. We can solve this equation because it is rewritten as
\begin{equation}
    P(x)\frac{d^2y}{dx^2}+Q(x)\frac{dy}{dx}+Ry=0, \label{e2.1}
\end{equation}
where $P(x)$ and $Q(x)$ are polynomials of degree two and one respectively, and transformed into the hypergeometric differential equation
$$x(1-x)\frac{d^2y}{dx^2}+(\gamma-(\alpha+\beta+1)x)\frac{dy}{dx}-\alpha\beta y=0$$
by variable translation $x\mapsto x+\delta$ and constant multiplication $x\mapsto cx$ in most cases.\footnote{If $P(x)$ has a multiple solution, (\upshape\ref{e2.1}) comes down to the confluent hypergeometric differential equation.} This equation is known to have the solution 
$$y=F(\alpha,\beta;\gamma;x)=\sum_{n=0}^\infty \frac{(\alpha)_n(\beta)_n}{(\gamma)_n}\frac{x^n}{n!}.$$
This can be derived from the fact that the equation is equivalent to the following:
$$\Bigl\{\left(\vartheta+\gamma\right)D-(\vartheta+\alpha)(\vartheta+\beta)\Bigr\}y=0.$$

Let us have the same argument when $\deg(a)=2$. Fix $s \not\in \mathbb{Z}_{\le 0}$ and put $y'_n = \frac{y_n}{(s)_n}$, where $y_n$ is a solution of (\upshape\ref{e1.2}). Define $y = y(x)$ by:
$$y=\sum_{n=0}^\infty y_n'\frac{x^n}{n!}.$$
Then $y_n'$ satisfies the following recurrence formula:
$$(n+s)(n+s-1)y_{n+1}'=(n+s-1)a(n)y_n'+b(n)y_{n-1}',\qquad n=1,2,3,\cdots.$$
Assume that $n+s-1$ divides $b(n)$. Then the formula is simplified as follows:
$$(n+s)y_{n+1}'=a(n)y_n'+c(n)y_{n-1}'\qquad\left(c(n)=\frac{b(n)}{n+s-1}\right).$$
Therefore, $y$ satisfies the differential equation
\begin{equation}
    \Bigl\{(\vartheta+s+1)D^2-a(\vartheta+1)D-c(\vartheta+1)\Bigr\}y=0. \label{e2.2}
\end{equation}

Before we discuss how to
solve this equation, let us first prove basic properties of the Euler operator.
\begin{lem}\label{l2.1}
Let $p(x)$ be a polynomial. Then, 
\begin{itemize}
    \item[\textup{1.}] $x^ap(\vartheta)=p(\vartheta-a)x^a\qquad(a\in\mathbb{C}).$
    \item[\textup{2.}] $x^nD^n=\vartheta^{\underline{n}}\qquad\left(n\in\mathbb{Z}_{\geq0},\vartheta^{\underline{n}}=\vartheta(\vartheta-1)\cdots(\vartheta-n+1)\right).$
    \item[\textup{3.}] $D^np(\vartheta)=p(\vartheta+n)D^n.$
\end{itemize}
\end{lem}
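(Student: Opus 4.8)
The plan is to derive all three identities from the two elementary first-order commutation relations
\[
\vartheta x^a = x^a(\vartheta + a), \qquad D\vartheta = (\vartheta + 1)D,
\]
each of which follows from a one-line application of the product rule: for any smooth $f$ one has $\vartheta(x^a f) = x\frac{d}{dx}(x^a f) = a x^a f + x^{a+1} f' = x^a(\vartheta + a)f$, and $D(\vartheta f) = \frac{d}{dx}(x f') = f' + x f'' = (\vartheta + 1)(Df)$. Equivalently, the first relation can be written $x^a \vartheta = (\vartheta - a)x^a$. Since all the stated identities are operator identities, once these building blocks are in place the remaining work is bookkeeping with noncommuting operators.

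For part 3 I would first reduce to the monomial case $p(\vartheta) = \vartheta^k$ by linearity, then iterate $D\vartheta = (\vartheta + 1)D$: a short induction on $n$ gives $D^n\vartheta = (\vartheta + n)D^n$, and a further induction on $k$ gives $D^n \vartheta^k = (\vartheta + n)^k D^n$; summing over the monomials of $p$ yields $D^n p(\vartheta) = p(\vartheta + n) D^n$. Part 1 is entirely parallel: reduce to $\vartheta^k$, iterate $x^a \vartheta = (\vartheta - a)x^a$ to obtain $x^a \vartheta^k = (\vartheta - a)^k x^a$, and recombine. The one point that needs care here is that $a$ is an arbitrary complex number, so one cannot induct on $a$; the induction must be on the power of $\vartheta$, with the complex exponent carried along passively inside $x^a$.

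For part 2 I would proceed by induction on $n$, the base case $n = 0$ being the identity (the empty product). Writing $x^{n+1}D^{n+1} = x^n(xD)D^n = x^n \vartheta D^n$ and using the mirror form $\vartheta D^n = D^n(\vartheta - n)$ of part 3 (obtained by rearranging $D\vartheta = (\vartheta + 1)D$), the inductive hypothesis $x^n D^n = \vartheta^{\underline{n}}$ gives $x^{n+1}D^{n+1} = \vartheta^{\underline{n}}(\vartheta - n) = \vartheta^{\underline{n+1}}$, closing the induction.

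Alternatively, all three identities admit a uniform verification on monomials, which I find cleaner: since $\vartheta x^m = m x^m$, the power $x^m$ is an eigenvector of $\vartheta$, so $p(\vartheta)x^m = p(m)x^m$, while $D^n x^m = m^{\underline{n}} x^{m-n}$. Evaluating each side of the three identities on $x^m$ then collapses to a scalar equality (for instance, in part 3 both sides send $x^m$ to $p(m)\,m^{\underline{n}} x^{m-n}$). Because every operator appearing is linear and continuous on the space of formal power series and the monomials span it, agreement on all $x^m$ forces the operator identities. The only thing to justify in this route is precisely that last passage from a spanning set to the full space; modulo that, there is no real obstacle, the whole lemma being a careful organization of the product rule together with the eigenrelation $\vartheta x^m = m x^m$.
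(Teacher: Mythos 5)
Your proposal is correct, and its primary route goes through: the two product-rule relations $\vartheta x^a = x^a(\vartheta+a)$ and $D\vartheta=(\vartheta+1)D$ are verified correctly, and the three inductions you sketch (on $n$ for $D^n\vartheta=(\vartheta+n)D^n$, on the monomial degree $k$, and on $n$ for part 2) all close. The paper organizes the work differently. It proves part 1 first, in two stages: the integer case $a=1$ is handled by writing $p$ as a product of linear factors $c(\vartheta-\alpha_1)\cdots(\vartheta-\alpha_n)$ and pushing $x$ through one factor at a time, and the general complex exponent is then obtained by the change of variables $x=t^a$ together with $\vartheta_x=\frac{1}{a}\vartheta_t$; part 2 is an induction using part 1; and part 3 is derived with no further induction by the algebraic chain $D^np(\vartheta)=x^{-n}\vartheta^{\underline{n}}p(\vartheta)=x^{-n}p(\vartheta)\vartheta^{\underline{n}}=p(\vartheta+n)x^{-n}\vartheta^{\underline{n}}=p(\vartheta+n)D^n$. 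So the logical order is reversed: the paper goes $1\to 2\to 3$, while you prove 3 and 1 independently from the first-order relations and then get 2 from 3. What your route buys is a cleaner treatment of complex $a$ — differentiating $x^a$ directly avoids the paper's substitution $x=t^a$, which is the least transparent step there (it silently needs $a\neq 0$ and care with multivalued powers); what the paper's route buys is economy, since part 3 costs nothing once 1 and 2 are in hand. Your alternative eigenvector argument ($\vartheta x^m = m x^m$, check on monomials) is the most conceptual, but the caveat you flag is real and slightly stronger than you suggest: it establishes the identities only on spaces spanned (in a suitable topology) by powers of $x$, whereas the paper uses them as operator identities on functions that are not a priori power series in the relevant variable, so the product-rule route should remain the backbone of the proof.
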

\begin{proof}
    \underline{1.} This formula holds for $a=1$ and $p(x)=x$. Namely, $x\vartheta$ and $(\vartheta-1)x$ are equal as operators. Thus, letting $q(x)$ be a polynomial and assuming that $q(x)=c(x-\alpha_1)(x-\alpha_2)\cdots(x-\alpha_n)$ we have
    \begin{align*}
    cx(\vartheta-\alpha_1)(\vartheta-\alpha_2)\cdots(\vartheta-\alpha_n)&=c(\vartheta-\alpha_1-1)x(\vartheta-\alpha_2)\cdots(\vartheta-\alpha_n)\\
    &=c(\vartheta-\alpha_1-1)(\vartheta-\alpha_2-1)x\cdots(\vartheta-\alpha_n)\\
    &\quad\vdots\\
    &=c(\vartheta-\alpha_1-1)(\vartheta-\alpha_2-1)\cdots(\vartheta-\alpha_n-1)x.
    \end{align*}
Therefore we get $xq(\vartheta)=q(\vartheta-1)x$. Here we transform the variable as $x=t^a$. From $\vartheta_x=\frac{1}{a}\vartheta_t$, we see that
    $$t^aq\left(\frac{\vartheta_t}{a}\right)=q\left(\frac{\vartheta_t}{a}-1\right)t^a.$$
    We obtain the formula by $t\mapsto x$ and $q(x)=p(ax)$.
    
    \underline{2.} We prove the formula by induction. It holds for $n=0$. Suppose that $x^{n-1}D^{n-1}=\vartheta^{\underline{n-1}}$. Then we have 
    $$x^nD^n=x(x^{n-1}D^{n-1})D=x\vartheta^{\underline{n-1}} D=(\vartheta-1)^{\underline{n-1}}xD=\vartheta^{\underline{n}},$$
    which completes the proof.

    \underline{3.} From the formula 1 and 2, we have
    $$D^np(\vartheta)=x^{-n}\vartheta^{\underline{n}}p(\vartheta)=x^{-n}p(\vartheta)\vartheta^{\underline{n}}=p(\vartheta+n)x^{-n}\vartheta^{\underline{n}}=p(\vartheta+n)D^n.$$
\end{proof}

As in the case where $\deg(a)=1$, we consider a variable translation in (\upshape\ref{e2.2}). To deal with the translation, the following lemma is useful.

\begin{lem}\label{l2.2}
Let $p(n)$ be a polynomial, and assume that $t=x-\delta$. Then,
$$p(\vartheta_x)=\sum_{k=0}^{\deg(p)} \frac{\Delta^kp(\vartheta_t)}{k!}(\delta D_t)^k.$$
\end{lem}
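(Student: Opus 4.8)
The plan is to rewrite $\vartheta_x$ in terms of the operators in the shifted variable $t=x-\delta$, reduce the claim to a non-commutative binomial theorem by linearity, and then verify that binomial theorem on the falling-factorial basis. First I would note that $t=x-\delta$ gives $D_x=D_t$ and $x=t+\delta$, so that
$$\vartheta_x=xD_x=(t+\delta)D_t=\vartheta_t+\delta D_t.$$
Writing $E=\delta D_t$, Lemma~\ref{l2.1}.3 applied in the variable $t$ yields the commutation relation $E^k p(\vartheta_t)=p(\vartheta_t+k)E^k$ for every polynomial $p$; in particular $E\vartheta_t=(\vartheta_t+1)E$ and $E^k\vartheta_t=(\vartheta_t+k)E^k$. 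This shift relation is the only structural input I will need.

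Since both sides of the asserted identity are linear in $p$, it suffices to prove it on a basis of the polynomial ring, and the natural choice is the falling-factorial basis $p(x)=x^{\underline{m}}$, because $\Delta$ acts transparently there: $\Delta^k x^{\underline{m}}=\frac{m!}{(m-k)!}x^{\underline{m-k}}$, vanishing for $k>m$. Substituting this into the right-hand side, the prefactors $\frac{1}{k!}\cdot\frac{m!}{(m-k)!}$ collapse to $\binom{m}{k}$, so the claim for $p=x^{\underline{m}}$ is exactly the operator identity
$$(\vartheta_t+E)^{\underline{m}}=\sum_{k=0}^m\binom{m}{k}\,\vartheta_t^{\underline{m-k}}\,E^k.$$

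I would then prove this identity by induction on $m$, which is the heart of the argument. Writing $X=\vartheta_t$ and $Y=E$, the inductive step multiplies $(X+Y)^{\underline{m}}$ on the right by the next factor $(X+Y-m)$ and pushes $Y^k$ past $X$ using $Y^kX=(X+k)Y^k$. The key computation is
$$X^{\underline{m-k}}\,Y^k\,(X+Y-m)=X^{\underline{m-k+1}}\,Y^k+X^{\underline{m-k}}\,Y^{k+1},$$
where the two competing multiples of $m\,X^{\underline{m-k}}Y^k$ cancel after using $X^{\underline{m-k}}(X+k)=X^{\underline{m-k+1}}+m\,X^{\underline{m-k}}$; summing against $\binom{m}{k}$ and invoking Pascal's rule then produces the case $m+1$.

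The main obstacle is precisely this non-commutativity: the falling factorial $(\vartheta_t+E)^{\underline{m}}$ cannot be expanded as in the ordinary binomial theorem, and the whole argument hinges on carefully tracking the shift $Y^kX=(X+k)Y^k$ so that the spurious terms telescope away in the induction. Once the falling-factorial case is settled, linearity in $p$ gives the general statement immediately, with the upper summation limit $\deg(p)$ accounted for by the vanishing of $\Delta^k p$ for $k>\deg(p)$.
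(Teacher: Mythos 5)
Your proof is correct and follows essentially the same route as the paper's: reduce by linearity to the falling-factorial basis, where the claim becomes the non-commutative binomial identity $(\vartheta_t+\delta D_t)^{\underline{m}}=\sum_{k=0}^{m}\binom{m}{k}\vartheta_t^{\underline{m-k}}(\delta D_t)^k$, proved by induction on $m$ using the commutation relation of Lemma~\ref{l2.1}.3 and Pascal's rule. The only difference is organizational: you peel off the trailing factor $(\vartheta_x-m)$ on the right, so the inductive hypothesis is applied verbatim and the non-commutativity is absorbed into the identity $\vartheta_t^{\underline{m-k}}(\delta D_t)^k(\vartheta_x-m)=\vartheta_t^{\underline{m-k+1}}(\delta D_t)^k+\vartheta_t^{\underline{m-k}}(\delta D_t)^{k+1}$, whereas the paper peels $\vartheta_x$ on the left and applies the hypothesis to the shifted polynomial $(n-1)^{\underline{m}}$.
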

\begin{proof}
    Both sides of the formula are linear with respect to $p$. Thus we only have to prove the formula for a certain sequence of polynomials $p_m(n)$ satisfying $\deg(p_m)=m$.
    
    Let $p(n)=n^{\underline{m}}$. The following property is well-known:
    $$\Delta n^{\underline{m}}=m\cdot n^{\underline{m-1}}.$$
    The formula holds for $p(n)=n^{\underline{0}}=1$. Suppose that the formula holds for $p(n)=n^{\underline{m}}$. Then,
    \begin{eqnarray*}
        {\vartheta_x}^{\underline{m+1}}&=&\vartheta_x(\vartheta_x-1)^{\underline{m}}\\
        &=&(\vartheta_t+\delta D_t)\sum_{k=0}^m\binom{m}{k}(\vartheta_t-1)^{\underline{m-k}}(\delta D_t)^k\\
        &=&\sum_{k=0}^m\binom{m}{k}{\vartheta_t}^{\underline{m-k+1}}(\delta D_t)^k+\sum_{k=0}^m\binom{m}{k}{\vartheta_t}^{\underline{m-k}}(\delta D_t)^{k+1}\\
        &=&{\vartheta_t}^{\underline{m+1}}+(\delta D_t)^{m+1}+\sum_{k=1}^m\left\{\binom{m}{k}+\binom{m}{k-1}\right\}{\vartheta_t}^{\underline{m-k+1}}(\delta D_t)^k\\
        &=&\sum_{k=0}^{m+1}\binom{m+1}{k}{\vartheta_t}^{\underline{(m+1)-k}}(\delta D_t)^k.
    \end{eqnarray*}
    Thus the formula holds for $p(n)=n^{\underline{m+1}}$. Therefore it is proved inductively that the formula holds for $p(n)=n^{\underline{m}}$ for all non-negative integers $m$.
\end{proof}
Putting $t=x-\delta$ in (\upshape\ref{e2.2}) and applying Lemma~{\upshape\ref{l2.2}}, we have
\begin{eqnarray*}
    \left[\left(\vartheta_t+\delta D_t+s+1\right)D_t^2-\left\{a(\vartheta_t+1)+\Delta a(\vartheta_t+1)\cdot(\delta D_t)+\frac{\Delta^2a(\vartheta_t+1)}{2}(\delta D_t)^2\right\}D_t\right.\\
    \left.-\left\{c(\vartheta_t+1)+\Delta c(\vartheta_t+1)\cdot(\delta D_t)+\frac{\Delta^2c(\vartheta_t+1)}{2}(\delta D_t)^2+\frac{\Delta^3c(\vartheta_t+1)}{6}(\delta D_t)^3\right\}\right]y=0.
\end{eqnarray*}
Let $a^{(0)}$ and $b^{(0)}$ be the leading coefficients of $a(n)$ and $b(n)$ respectively. Then $\frac{\Delta^2a(\vartheta_t)}{2}=a^{(0)},\frac{\Delta^3c(\vartheta_t)}{6}=b^{(0)}$. Expanding the equation with respect to $D$, we obtain
\begin{eqnarray}\left[\delta(-b^{(0)}\delta^2-a^{(0)}\delta+1)D^3+\left\{-\frac{\delta^2}{2}\Delta^2c(\vartheta+1)-\delta\Delta a(\vartheta+1)+(\vartheta+s+1)\right\}D^2\right.\notag\\
+\left\{-\delta\Delta c(\vartheta+1)-a(\vartheta+1)\right\}D-c(\vartheta+1)\biggr]y(t)=0. \label{e2.3}
\end{eqnarray}

If two of the coefficients of $D^3$, $D^2$ and $D$ are zero, the equation is described more simply than (\upshape\ref{e2.2}). In particular, when we suppose that the coefficients of $D^3$ and $D$ equal zero, we obtain (\upshape\ref{e1.3}). 

\begin{thm}\label{t2.1}
    Assume that $\Re(2\gamma-\alpha-\beta)<1$ and that $\frac{\alpha+1}{2},\frac{\beta+1}{2},\gamma+1\not\in\mathbb{Z}_{\leq0}$. Then,
    $$\textup{CF}\left[\Delta\Bigl(n(n+\alpha)(n+\beta)\Bigr),-2n(n+\alpha)(n+\beta)(n+\gamma)\right]=4\left\{\sum_{n=0}^\infty \frac{\bigl(\frac{1}{2}\bigr)_n(\gamma+1)_n}{\bigl(\frac{\alpha+1}{2}\bigr)_{n+1}\bigl(\frac{\beta+1}{2}\bigr)_{n+1}}\right\}^{-1}.$$
\end{thm}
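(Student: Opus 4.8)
The plan is to specialize the construction developed above to $a(n)=\Delta\bigl(n(n+\alpha)(n+\beta)\bigr)$ and $b(n)=-2n(n+\alpha)(n+\beta)(n+\gamma)$, and then to extract the value of the continued fraction from the minimal (recessive) solution of the recurrence \eqref{e1.2}. First I would record the elementary data. Writing $g(n)=n(n+\alpha)(n+\beta)$, we have $a(n)=\Delta g(n)$ and $b(n)=-2g(n)(n+\gamma)$, so $a^{(0)}=3$, $b^{(0)}=-2$, and $a_0=a(0)=(1+\alpha)(1+\beta)$. The factor $n+\gamma$ in $b(n)$ singles out the choice $s=\gamma+1$: then $n+s-1=n+\gamma$ divides $b(n)$ and $c(n)=b(n)/(n+\gamma)=-2g(n)$. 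The decisive structural identity is that $c=-2g$ while $a=\Delta g$, whence $\Delta c=-2\,\Delta g=-2a$, and in particular $\Delta c(\vartheta+1)=-2\,a(\vartheta+1)$ as operators.

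With these identities I would pass to the translated equation \eqref{e2.3} and choose $\delta$ so that two of its coefficients vanish simultaneously, the situation flagged just before the statement. Using $a^{(0)}=3$ and $b^{(0)}=-2$, the coefficient of $D^3$ factors as $\delta(2\delta^2-3\delta+1)=\delta(2\delta-1)(\delta-1)$, while the identity $\Delta c(\vartheta+1)=-2\,a(\vartheta+1)$ collapses the coefficient of $D$ to $(2\delta-1)\,a(\vartheta+1)$. Both vanish exactly at $\delta=\tfrac12$, leaving the single relation $\bigl\{E(\vartheta)D^2-c(\vartheta+1)\bigr\}y=0$ with $E(\vartheta)$ affine in $\vartheta$; after a constant rescaling of the variable this is of hypergeometric type and is solved explicitly. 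A consistency check supports the identification of the answer: rewriting the right-hand side series as $\tfrac{4}{(\alpha+1)(\beta+1)}\,{}_3F_2\bigl(\tfrac12,1,\gamma+1;\tfrac{\alpha+3}{2},\tfrac{\beta+3}{2};1\bigr)$, convergence at argument $1$ demands $\Re\bigl(\tfrac{\alpha+\beta}{2}-\gamma+\tfrac12\bigr)>0$, i.e. $\Re(2\gamma-\alpha-\beta)<1$, which is precisely the hypothesis of the theorem.

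It remains to turn this solution into the value of the continued fraction. Combining Lemma~\ref{l1.1} with the Casoratian identity $A_nB_{n-1}-A_{n-1}B_n=(-1)^n b_1\cdots b_{n-1}$ yields the classical formula $\textup{CF}[a_n,b_n]=a_0-h_1/h_0$ for any minimal solution $h_n$ with $h_0\neq0$. Since the generating function built from $h_n$ is $y(x)=\sum_n \tfrac{h_n}{(\gamma+1)_n}\tfrac{x^n}{n!}$, one has $h_0=y(0)$ and $h_1=(\gamma+1)\,y'(0)$, so the problem reduces to evaluating the distinguished solution of the reduced equation and its derivative at $x=0$. Because the translation was by $\tfrac12$, the point $x=0$ is carried to the singular point at which the ${}_3F_2$ above is summed, and it is this evaluation that produces the stated series together with the constant $4$. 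The hard part will be exactly this last step: among the three solutions of the third-order reduced equation one must single out the one that is the generating function of the minimal solution of \eqref{e1.2}, and then evaluate it at the singular point through the hypergeometric connection and summation formulas so as to land on precisely the series displayed, with the correct normalization. Finally, when $\Re(2\gamma-\alpha-\beta)\ge1$ the series diverges and the identity has to be understood by analytic continuation in the parameters, which is the purpose of Lemma~\ref{l2.3} and Theorem~\ref{t2.2}.
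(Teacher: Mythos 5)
Your first half is exactly the paper's reduction: the choice $s=\gamma+1$, $c(n)=-2n(n+\alpha)(n+\beta)$, the identity $\Delta c=-2a$, and the observation that $\delta=\tfrac12$ kills both the $D^3$ and $D$ coefficients of \eqref{e2.3} all match the published proof, and your convergence check correctly recovers the hypothesis $\Re(2\gamma-\alpha-\beta)<1$. The gap is in the second half, and it is the mathematical core of the theorem: you reduce everything to ``single out the solution of the reduced equation that generates the minimal solution of \eqref{e1.2} and evaluate it,'' and then explicitly defer that step (``the hard part will be exactly this last step''). That identification is not routine. Pincherle's theorem does give $\textup{CF}[a_n,b_n]=a_0-h_1/h_0$ once a minimal solution $h_n$ is exhibited, but proving that a particular ODE solution is minimal requires exactly the kind of growth/asymptotic analysis you have not supplied; moreover, Pincherle in its clean form yields general convergence of the continued fraction, and upgrading this to the classical convergence of the convergents $A_n/B_n$ asserted via Lemma~\ref{l1.1} needs additional care. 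So as written, the proposal establishes the reduced differential equation but not the value of the continued fraction.

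The paper avoids minimal solutions altogether. Solving the reduced equation around $v=0$ (where $v=4x(1-x)$) gives the two-parameter family $z_n=\frac{\bigl(\frac{\alpha+1}{2}\bigr)_n\bigl(\frac{\beta+1}{2}\bigr)_n}{(\gamma+1)_n}\Bigl(z_0+\frac{C}{2}\sum_{k=0}^{n-1}\frac{\bigl(\frac12\bigr)_k(\gamma+1)_k}{\bigl(\frac{\alpha+1}{2}\bigr)_{k+1}\bigl(\frac{\beta+1}{2}\bigr)_{k+1}}\Bigr)$, and the initial conditions \eqref{e1.1} pin down $(z_0,C)=(1,0)$ for $A_n$ and $(0,\tfrac12)$ for $B_n$; expanding $\{4x(1-x)\}^n$ then yields closed finite-sum formulas for $A_n$ and $B_n$, and $\lim A_n/B_n=4/S$ (with $S$ the displayed series) follows directly because under $\Re(2\gamma-\alpha-\beta)<1$ the inner partial sums converge to $S$ with error $O(k^{-\epsilon})$. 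If you prefer to finish along your route, note that your formula gives $a_0-h_1/h_0=-2C/z_0$, so the candidate minimal solution is the one with $z_0=-\frac{C}{2}S$, which indeed yields the value $4/S$; but you would still have to prove minimality of that choice (its bracket is the tail $-\frac{C}{2}\sum_{k\geq n}$, which tends to $0$, versus a nonzero constant for generic solutions, and this must be transferred from the $z_n$ back to the $h_n$ through the substitution $v=4x(1-x)$) and then justify classical convergence --- which is essentially the same asymptotic work the paper carries out explicitly.
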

\begin{proof}
    Assume that $y_n$ satisfies the following recurrence formula:
    \begin{equation}
        y_{n+1}=\Delta\Bigl(n(n+\alpha)(n+\beta)\Bigr)y_n-2n(n+\alpha)(n+\beta)(n+\gamma)y_{n-1},\qquad n=1,2,3,\cdots, \label{e2.4}
    \end{equation}
    and let $y=y(x)$ be a function defined by
    $$y=\sum_{n=0}^\infty\frac{y_n}{(\gamma+1)_n}\frac{x^n}{n!}.$$
    By substituting $a(n)=\Delta(n(n+\alpha)(n+\beta))$, $s=\gamma+1$ and $c(n)=-2n(n+\alpha)(n+\beta)$ for (\upshape\ref{e2.2}), we see that $y$ satisfies the following differential equation:
    \begin{align}
    \Bigl\{(\vartheta_x+\gamma+2){D_x}^2-\Delta\Bigl((\vartheta_x+1)(\vartheta_x+\alpha+1)(\vartheta_x+\beta+1)\Bigr)D_x\notag\\
    +2(\vartheta_x+1)(\vartheta_x+\alpha+1)(\vartheta_x+\beta+1)\Bigr\}y=0. \label{e2.5}
    \end{align}
    With the translation of variable $t=x-\delta$, applying Lemma~{\upshape\ref{l2.2}} like (\upshape\ref{e2.3}), we have
    \begin{align*}
        \left[\delta(2\delta^2-3\delta+1){D_t}^3+\left\{\delta(\delta-1)\Delta^2\Bigl((\vartheta_t+1)(\vartheta_t+\alpha+1)(\vartheta_t+\beta+1)\Bigr)+(\vartheta_t+\gamma+2)\right\}{D_t}^2\right.\\
        \left.+(2\delta-1)\Delta\Bigl((\vartheta_t+1)(\vartheta_t+\alpha+1)(\vartheta_t+\beta+1)\Bigr)D_t+2(\vartheta_t+1)(\vartheta_t+\alpha+1)(\vartheta_t+\beta+1)\right]y=0.
    \end{align*}
    Putting $\delta=1/2$, we have
    $$\left\{\frac{\vartheta+(\alpha+\beta-2\gamma+2)}{4}{D}^2-(\vartheta+1)(\vartheta+\alpha+1)(\vartheta+\beta+1)\right\}y(t)=0.$$
    From Lemma~{\upshape\ref{l2.1}} we see that
    \begin{align*}
        \bigl(\vartheta+(\alpha+\beta-2\gamma+2)\bigr)D^2       &=\bigl(\vartheta+(\alpha+\beta-2\gamma+2)\bigr)t^{-2}\vartheta(\vartheta-1)\\
        &=(\vartheta+1)\bigl(\vartheta+(\alpha+\beta-2\gamma+2)\bigr)t^{-2}\vartheta.
    \end{align*}
    Substituting this, we have
    $$\left\{(\vartheta+1)\left(\vartheta+(\alpha+\beta-2\gamma+2)\right)\frac{1}{(2t)^2}\vartheta-(\vartheta+1)(\vartheta+\alpha+1)(\vartheta+\beta+1)\right\}y(t)=0.$$
    Thus,
    $$\left\{\left(\vartheta_t+(\alpha+\beta-2\gamma+2)\right)\frac{1}{(2t)^2}\vartheta_t-(\vartheta_t+\alpha+1)(\vartheta_t+\beta+1)\right\}y=\frac{C}{t},$$
    where $C$ is an arbitrary constant. Putting $u=(2t)^2$, we see from $\vartheta_t=2\vartheta_u$ that 
    $$\left\{\left(\vartheta_u+\left(\frac{\alpha+\beta}{2}-\gamma+1\right)\right)D_u-\left(\vartheta_u+\frac{\alpha+1}{2}\right)\left(\vartheta_u+\frac{\beta+1}{2}\right)\right\}y=\frac{C}{2\sqrt{u}}.$$
    Since $u=(2x-1)^2$, we see that $u=1$ at $x=0$. Hence, to solve (\upshape\ref{e2.4}) we should get power series solution at $u=1$ of this differential equation. Put $u=1-v$. Then, by applying Lemma~{\upshape\ref{l2.2}}, we obtain
    $$\left\{(\vartheta_v+\gamma+1)D_v-\left(\vartheta_v+\frac{\alpha+1}{2}\right)\left(\vartheta_v+\frac{\beta+1}{2}\right)\right\}y=\frac{C}{2}\sum_{n=0}^\infty \left(\frac{1}{2}\right)_n\frac{v^n}{n!}.$$
    Put $y(v)$ as follows:
    $$y=\sum_{n=0}^\infty z_n\frac{v^n}{n!}.$$
    Then,
    $$(n+\gamma+1)z_{n+1}-\left(n+\frac{\alpha+1}{2}\right)\left(n+\frac{\beta+1}{2}\right)z_n=\frac{C}{2}\left(\frac{1}{2}\right)_n.$$
    By solving this recurrence formula, by $\frac{\alpha+1}{2},\frac{\beta+1}{2},\gamma+1\not\in\mathbb{Z}_{\leq0}$, we get
    $$z_n=\frac{\bigl(\frac{\alpha+1}{2}\bigr)_n\bigl(\frac{\beta+1}{2}\bigr)_n}{(\gamma+1)_n}\left(z_0+\frac{C}{2}\sum_{k=0}^{n-1} \frac{\bigl(\frac{1}{2}\bigr)_k(\gamma+1)_k}{\bigl(\frac{\alpha+1}{2}\bigr)_{k+1}\bigl(\frac{\beta+1}{2}\bigr)_{k+1}}\right).$$
    Since $v=4x(1-x)$, we have
    $$\sum_{n=0}^\infty \frac{y_n}{(\gamma+1)_n}\frac{x^n}{n!}=\sum_{n=0}^\infty \frac{\bigl(\frac{\alpha+1}{2}\bigr)_n\bigl(\frac{\beta+1}{2}\bigr)_n}{(\gamma+1)_n}\left(z_0+\frac{C}{2}\sum_{k=0}^{n-1} \frac{\bigl(\frac{1}{2}\bigr)_k(\gamma+1)_k}{\bigl(\frac{\alpha+1}{2}\bigr)_{k+1}\bigl(\frac{\beta+1}{2}\bigr)_{k+1}}\right)\frac{\left\{4x(1-x)\right\}^n}{n!}.$$
    By the comparison of coefficients of degree 0 and 1, we have
    $$
    \left\{\begin{array}{l}
    y_0=z_0 ,\\
    \displaystyle\frac{y_1}{\gamma+1}=\frac{(\alpha+1)(\beta+1)}{\gamma+1}z_0+\frac{2C}{\gamma+1}.
    \end{array}
    \right.
    $$
    Therefore we have $z_0=y_0$ and $C=\frac{y_1-(\alpha+1)(\beta+1)y_0}{2}$.
    Thus, assuming that $A_n$ and $B_n$ are defined by the formula (\upshape\ref{e1.1}) with $a_n=\Delta\Bigl(n(n+\alpha)(n+\beta)\Bigr)$ and $b_n=-2n(n+\alpha)(n+\beta)(n+\gamma)$, we have
    \begin{align}
        \sum_{n=0}^\infty \frac{A_n}{(\gamma+1)_n}\frac{x^n}{n!}&=\sum_{n=0}^\infty \frac{\bigl(\frac{\alpha+1}{2}\bigr)_n\bigl(\frac{\beta+1}{2}\bigr)_n}{(\gamma+1)_n}\frac{\{4x(1-x)\}^n}{n!}\notag\\
        &=F\left(\frac{\alpha+1}{2},\frac{\beta+1}{2};\gamma+1;4x(1-x)\right) \label{e2.6}\\
        &=\sum_{n=0}^\infty\frac{\bigl(\frac{\alpha+1}{2}\bigr)_n\bigl(\frac{\beta+1}{2}\bigr)_n}{(\gamma+1)_nn!}4^n\sum_{i=0}^n\binom{n}{i}(-1)^ix^{n+i},\notag\\
        \sum_{n=0}^\infty \frac{B_n}{(\gamma+1)_n}\frac{x^n}{n!}&=\frac{1}{4}\sum_{n=0}^\infty \frac{\bigl(\frac{\alpha+1}{2}\bigr)_n\bigl(\frac{\beta+1}{2}\bigr)_n}{(\gamma+1)_n}\left(\sum_{k=0}^{n-1} \frac{\bigl(\frac{1}{2}\bigr)_k(\gamma+1)_k}{\bigl(\frac{\alpha+1}{2}\bigr)_{k+1}\bigl(\frac{\beta+1}{2}\bigr)_{k+1}}\right)\frac{\{4x(1-x)\}^n}{n!}\notag\\
        &=\frac{1}{4}\sum_{n=0}^\infty \frac{\bigl(\frac{\alpha+1}{2}\bigr)_n\bigl(\frac{\beta+1}{2}\bigr)_n}{(\gamma+1)_nn!}\left(\sum_{k=0}^{n-1} \frac{\bigl(\frac{1}{2}\bigr)_k(\gamma+1)_k}{\bigl(\frac{\alpha+1}{2}\bigr)_{k+1}\bigl(\frac{\beta+1}{2}\bigr)_{k+1}}\right)4^n\sum_{i=0}^n\binom{n}{i}(-1)^ix^{n+i}.\notag
    \end{align}
    Thus
    \begin{align*}
        A_n&=(\gamma+1)_nn!\sum_{k=\lceil\frac{n}{2}\rceil}^n\frac{\bigl(\frac{\alpha+1}{2}\bigr)_k\bigl(\frac{\beta+1}{2}\bigr)_k}{(\gamma+1)_kk!}4^k\binom{k}{n-k}(-1)^{n-k},\\
        B_n&=\frac{(\gamma+1)_nn!}{4}\sum_{k=\lceil\frac{n}{2}\rceil}^n\frac{\bigl(\frac{\alpha+1}{2}\bigr)_k\bigl(\frac{\beta+1}{2}\bigr)_k}{(\gamma+1)_kk!}4^k\binom{k}{n-k}(-1)^{n-k}\sum_{i=0}^{k-1} \frac{\bigl(\frac{1}{2}\bigr)_i(\gamma+1)_i}{\bigl(\frac{\alpha+1}{2}\bigr)_{i+1}\bigl(\frac{\beta+1}{2}\bigr)_{i+1}}.
    \end{align*}
    From
    $$\Gamma(z)\sim\sqrt{\frac{2\pi}{z}}\left(\frac{z}{e}\right)^z\quad(|\arg z|<\pi,z\rightarrow\infty),$$
    we see that
    $$\frac{(a)_n}{(b)_n}=\frac{\Gamma(b)}{\Gamma(a)}\cdot\frac{\Gamma(n+a)}{\Gamma(n+b)}\sim\frac{\Gamma(b)}{\Gamma(a)}\cdot e^{b-a}\sqrt{\frac{n+b}{n+a}}\left(1+\frac{a-b}{n+b}\right)^{n+b}(n+a)^{a-b}\sim\frac{\Gamma(b)}{\Gamma(a)}n^{a-b}=O(n^{\Re(a-b)})$$
    for $a,b\not\in\mathbb{Z}_{\leq0}$. Thus, when $\Re(2\gamma-\alpha-\beta)<1$, since 
    $$\frac{\bigl(\frac{1}{2}\bigr)_n(\gamma+1)_n}{\bigl(\frac{\alpha+1}{2}\bigr)_{n+1}\bigl(\frac{\beta+1}{2}\bigr)_{n+1}}=O\left(\frac{1}{n^{1+\epsilon}}\right)\qquad \left(\epsilon=\Re\left(\frac{\alpha+\beta+1}{2}-\gamma\right)>0\right)$$
    we have
    $$\sum_{i=0}^{k-1} \frac{\bigl(\frac{1}{2}\bigr)_i(\gamma+1)_i}{\bigl(\frac{\alpha+1}{2}\bigr)_{i+1}\bigl(\frac{\beta+1}{2}\bigr)_{i+1}}=\sum_{i=0}^{\infty} \frac{\bigl(\frac{1}{2}\bigr)_i(\gamma+1)_i}{\bigl(\frac{\alpha+1}{2}\bigr)_{i+1}\bigl(\frac{\beta+1}{2}\bigr)_{i+1}}+O(k^{-\epsilon})\quad(k\rightarrow\infty).$$
    Therefore it follows from Lemma~{\upshape\ref{l1.1}} that
    \begin{align*}
        &\textup{CF}\left[\Delta\Bigl(n(n+\alpha)(n+\beta)\Bigr),-2n(n+\alpha)(n+\beta)(n+\gamma)\right]=\lim_{n\rightarrow\infty}\frac{A_n}{B_n}\\
        &=\lim_{n\rightarrow\infty}\frac{\displaystyle\sum_{k=\lceil\frac{n}{2}\rceil}^n\frac{\bigl(\frac{\alpha+1}{2}\bigr)_k\bigl(\frac{\beta+1}{2}\bigr)_k}{(\gamma+1)_kk!}4^k\binom{k}{n-k}(-1)^{n-k}}{\displaystyle\frac{1}{4}\sum_{k=\lceil\frac{n}{2}\rceil}^n\frac{\bigl(\frac{\alpha+1}{2}\bigr)_k\bigl(\frac{\beta+1}{2}\bigr)_k}{(\gamma+1)_kk!}4^k\binom{k}{n-k}(-1)^{n-k}\left(\sum_{i=0}^{\infty} \frac{\bigl(\frac{1}{2}\bigr)_i(\gamma+1)_i}{\bigl(\frac{\alpha+1}{2}\bigr)_{i+1}\bigl(\frac{\beta+1}{2}\bigr)_{i+1}}+O(k^{-\epsilon})\right)}\\
        &=4\left\{\sum_{n=0}^\infty \frac{\bigl(\frac{1}{2}\bigr)_n(\gamma+1)_n}{\bigl(\frac{\alpha+1}{2}\bigr)_{n+1}\bigl(\frac{\beta+1}{2}\bigr)_{n+1}}\right\}^{-1}.
    \end{align*}
\end{proof}

To find the value of the continued fraction for $\Re(2\gamma-\alpha-\beta)\geq1$, we prove the following lemma.

\begin{lem}\label{l2.3}
    Define a function $H(\alpha,\beta;\gamma)$ of three complex variables  by
    $$H(\alpha,\beta;\gamma)=\frac{1}{\displaystyle\textup{CF}\left[\Delta\Bigl(n(n+\alpha)(n+\beta)\Bigr),-2n(n+\alpha)(n+\beta)(n+\gamma)\right]},$$
    where $\frac{\alpha+1}{2},\frac{\beta+1}{2},\gamma+1,\frac{\alpha+\beta+1}{2}-\gamma\not\in\mathbb{Z}_{\leq0}$.
    Then $H(\alpha,\beta;\gamma)$ is regular in each variable.
\end{lem}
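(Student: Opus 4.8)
The plan is to make $H$ explicit where Theorem~{\upshape\ref{t2.1}} already applies, recognize it as a generalized hypergeometric function at unit argument, and then continue it analytically to the whole domain while tracking the singular locus. Put $\epsilon_0=\frac{\alpha+\beta+1}{2}-\gamma$, let $D$ be the set on which $\frac{\alpha+1}{2},\frac{\beta+1}{2},\gamma+1,\epsilon_0\notin\mathbb{Z}_{\leq0}$, and set $D_0=D\cap\{\Re(\epsilon_0)>0\}$. On $D_0$, Theorem~{\upshape\ref{t2.1}} gives
\begin{equation*}
H(\alpha,\beta;\gamma)=\frac{1}{4}\sum_{n=0}^\infty\frac{\bigl(\frac{1}{2}\bigr)_n(\gamma+1)_n}{\bigl(\frac{\alpha+1}{2}\bigr)_{n+1}\bigl(\frac{\beta+1}{2}\bigr)_{n+1}}=\frac{1}{(\alpha+1)(\beta+1)}\,{}_3F_2\!\left(\tfrac{1}{2},1,\gamma+1;\tfrac{\alpha+3}{2},\tfrac{\beta+3}{2};1\right),
\end{equation*}
the last equality following from $\bigl(\frac{\alpha+1}{2}\bigr)_{n+1}=\frac{\alpha+1}{2}\bigl(\frac{\alpha+3}{2}\bigr)_n$, the analogue for $\beta$, and $(1)_n/n!=1$. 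Each summand is holomorphic on $D$, and the estimate $O(n^{-1-\Re(\epsilon_0)})$ obtained in the proof of Theorem~{\upshape\ref{t2.1}} is locally uniform on $D_0$; hence the series converges locally uniformly and $H$ is holomorphic on $D_0$ by Weierstrass's theorem. This reduces the lemma to continuing $H$ across $\{\Re(\epsilon_0)=0\}$ into the rest of $D$.

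For the continuation I would replace the series by an integral representation that is holomorphic in the parameters and converges in a strictly larger region --- the Mellin--Barnes integral of the ${}_3F_2$ at unit argument, whose integrand is
\begin{equation*}
\frac{\Gamma\!\bigl(\tfrac{1}{2}+s\bigr)\,\Gamma(1+s)\,\Gamma(\gamma+1+s)\,\Gamma(-s)}{\Gamma\!\bigl(\tfrac{\alpha+3}{2}+s\bigr)\,\Gamma\!\bigl(\tfrac{\beta+3}{2}+s\bigr)},
\end{equation*}
integrated along a contour $\mathcal{C}$ separating the poles of $\Gamma(-s)$ from those of the other three Gamma factors. Summing residues at $s=n$ recovers the series on $D_0$, while Stirling's formula shows that the integral itself converges and is holomorphic in each of $\alpha,\beta,\gamma$ in a half-space reaching beyond $\Re(\epsilon_0)=0$. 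Since ``regular in each variable'' is a one-variable assertion, I would carry this out one variable at a time with the other two frozen, reading off from the Gamma factors (and the prefactor $\Gamma(\frac{\alpha+3}{2})\Gamma(\frac{\beta+3}{2})/[\Gamma(\frac{1}{2})\Gamma(\gamma+1)]$) that the only parameter singularities sit on $\frac{\alpha+1}{2},\frac{\beta+1}{2},\epsilon_0\in\mathbb{Z}_{\leq0}$, with $\gamma+1\notin\mathbb{Z}_{\leq0}$ already imposed --- precisely the complement of $D$.

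To complete the identification $H=1/\textup{CF}$ on all of $D$, I would use that $A_n$ and $B_n$, generated by the polynomial recurrence (\upshape\ref{e1.1}) with the coefficients of (\upshape\ref{e2.4}), are entire in $(\alpha,\beta,\gamma)$, so each $B_n/A_n$ is meromorphic; by a Poincar\'e--Perron type analysis of (\upshape\ref{e2.4}) the limit $\lim_{n\to\infty}B_n/A_n$ is governed by the dominant solution and depends holomorphically on the parameters off the excluded loci. As this limit agrees on $D_0$ with the integral continuation, the identity theorem forces the two to coincide throughout $D$, giving regularity.

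The main obstacle is exactly the crossing of $\{\Re(\epsilon_0)=0\}$: there the defining series diverges, so holomorphy cannot be read off termwise, and one must show that the continuation produces singularities only along the codimension-one loci $\epsilon_0\in\mathbb{Z}_{\leq0}$ (besides the Pochhammer poles $\frac{\alpha+1}{2},\frac{\beta+1}{2}\in\mathbb{Z}_{\leq0}$). Concretely, as $\epsilon_0\to0$ the pole of $\Gamma(-s)$ at $s=0$ collides with the decay produced by the numerator, generating a $\Gamma(\epsilon_0)$-type factor; the delicate point is to verify that this factor is cancelled by the accompanying prefactor so that $H$ remains finite away from the excluded hyperplanes.
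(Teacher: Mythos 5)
Your reduction to $D_0=\{\Re(\epsilon_0)>0\}$ and your diagnosis that the crossing of $\{\Re(\epsilon_0)=0\}$ is the main obstacle are both correct, but the device you propose for the crossing is circular, and it sits exactly where the paper's proof does its real work. To transfer the identity $1/\textup{CF}=(\text{continuation of the }{}_3F_2)$ from $D_0$ to all of $D$ by the identity theorem, you must already know that the left-hand side is holomorphic on the connected domain $D$ --- and that is precisely the assertion of Lemma~\ref{l2.3}. You delegate this to ``a Poincar\'e--Perron type analysis'' showing that $\lim_{n\to\infty}B_n/A_n$ ``depends holomorphically on the parameters off the excluded loci,'' but Poincar\'e--Perron theory only yields, for \emph{fixed} parameters, that ratios of normalized solutions tend to one of the characteristic roots $1,2$ of $\lambda^2=3\lambda-2$; it neither decides which root governs the particular solution $A_n$ nor gives any holomorphy in the parameters. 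Deciding that $A_n'=A_n/\bigl((\gamma+1)_n n!\bigr)$ is dominant, i.e.\ $A_{n+1}'/A_n'\to 2$, for \emph{every} parameter in $D$ is the crux, and the paper settles it by a mechanism absent from your proposal: by (\ref{e2.6}), $\sum_n A_n'x^n=F\bigl(\tfrac{\alpha+1}{2},\tfrac{\beta+1}{2};\gamma+1;4x(1-x)\bigr)$, and the ${}_2F_1$ connection formulas at $z=1$ show that this function has a pole or branch point at $x=\tfrac12$ precisely under the four exclusion hypotheses (this is the only place where $\tfrac{\alpha+\beta+1}{2}-\gamma\notin\mathbb{Z}_{\leq 0}$ enters), so the radius of convergence is at most $\tfrac12$ and the ratio must be $2$. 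Once dominance holds on $D$, Lemma~\ref{l3.1} applied to $y_n=A_n$ gives
\begin{equation*}
H(\alpha,\beta;\gamma)=\sum_{n=0}^\infty\frac{(\alpha+1)_n(\beta+1)_n}{(\gamma+1)_{n+1}(n+1)!}\frac{2^n}{A_n'A_{n+1}'},
\end{equation*}
a series of functions holomorphic on $D$ whose consecutive-term ratios tend to $\tfrac12$; holomorphy of $H$ follows directly, with no continuation of the ${}_3F_2$ and no identity theorem at all. In short, your plan continues the \emph{series} but never proves anything new about the \emph{continued fraction}, which is the quantity the lemma is about.

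A secondary, technical flaw: the claim that the Mellin--Barnes integral for the ${}_3F_2$ at unit argument ``converges and is holomorphic in a half-space reaching beyond $\Re(\epsilon_0)=0$'' is not correct as stated. On a vertical contour $s=\sigma+it$, Stirling gives $|{}\text{integrand}{}|\asymp |t|^{-\Re(\epsilon_0)-1}e^{-\pi|t|}\,|(-1)^s|$, and with any fixed branch $|(-1)^s|=e^{\mp\pi t}$ cancels the exponential decay on one half of the contour, leaving only the algebraic factor $|t|^{-\Re(\epsilon_0)-1}$; so the integral's convergence is governed by the same abscissa $\Re(\epsilon_0)>0$ as the series. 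A parameter continuation of ${}_3F_2(1)$ is certainly available (e.g.\ via Thomae's relations), so this is repairable; but even after repairing it, the circularity described above remains.
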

\begin{proof}
    Put $A_n'=\frac{A_n}{(\gamma+1)_nn!}$, where $A_n$ is defined by (\upshape\ref{e1.1}). Since
    \begin{align*}
        A_{n+1}'&=\frac{\Delta\Bigl(n(n+\alpha)(n+\beta)\Bigr)}{(n+1)(n+\gamma+1)}A_n'-\frac{2(n+\alpha)(n+\beta)}{(n+1)(n+\gamma+1)}A_{n-1}'\\
        &=\left\{3+O\left(\frac{1}{n}\right)\right\}A_n'-\left\{2+O\left(\frac{1}{n}\right)\right\}A_{n-1}',
    \end{align*}
    we have
    \begin{equation}
        \lim_{n\rightarrow\infty}\frac{A_{n+1}'}{A_n'}\in\left\{\lambda|\lambda^2=3\lambda-2\right\}=\{1,2\}. \label{e2.7}
    \end{equation}
    
    Moreover, we get the following from Lemma~{\upshape\ref{l3.1}} discussed below:
    $$H(\alpha,\beta;\gamma)=\sum_{n=0}^\infty\frac{(\alpha+1)_n(\beta+1)_n}{(\gamma+1)_{n+1}(n+1)!}\frac{2^n}{A_n'A_{n+1}'}.$$
    When $\lim_{n\rightarrow\infty}A_{n+1}'/A_n'=2$, the ratio of the adjacent terms in the right-hand side converges to $\frac{1}{2}$. Thus the series converges absolutely uniformly in each variable. Therefore $H(\alpha,\beta;\gamma)$ is regular in each variable when $\lim_{n\rightarrow\infty}A_{n+1}'/A_n'=2$.
    
    By (\upshape\ref{e2.6}), we have
    $$\sum_{n=0}^\infty A_n'x^n=F\left(\frac{\alpha+1}{2},\frac{\beta+1}{2};\gamma+1;4x(1-x)\right).$$
    We can obtain the analytic continuation of $F(a,b;c;z)$ around $z=1$ by the following formulas\cite[pp.108-110]{EH}. Assume that $a,b,c\not\in\mathbb{Z}_{\leq0}$.
    \begin{enumerate}
        \item when $c-a-b$ is not an integer,
        \begin{align*}
        F(a,b;c;z)=&\frac{\Gamma(c)\Gamma(c-a-b)}{\Gamma(c-a)\Gamma(c-b)}F(a,b;a+b-c+1;1-z)\\&+(1-z)^{c-a-b}\frac{\Gamma(c)\Gamma(a+b-c)}{\Gamma(a)\Gamma(b)}F(c-a,c-b;c-a-b+1;1-z)
        \end{align*}
        \hfill$|\arg(1-z)|<\pi.$
        \item when $c-a-b=m$ is a non-negative integer,
        \begin{align*}
            &F(a,b;a+b+m;z)\\&=\frac{\Gamma(m)\Gamma(a+b+m)}{\Gamma(a+m)\Gamma(b+m)}\sum_{n=0}^{m-1}\frac{(a)_n(b)_n}{(1-m)_nn!}(1-z)^n\\
            &+(1-z)^m(-1)^m\frac{\Gamma(a+b+m)}{\Gamma(a)\Gamma(b)}\sum_{n=0}^\infty\frac{(a+m)_n(b+m)_n}{(n+m)!n!}\left\{h_n-\log(1-z)\right\}(1-z)^n
        \end{align*}
        \hfill $|\arg(1-z)|<\pi\;,\;h_n=\psi(n+1)+\psi(n+m+1)-\psi(a+n+m)-\psi(b+n+m)$.
        \item when $c-a-b=-m$ is a negative integer,
        \begin{align*}
            &F(a,b;a+b-m;z)\\&=(1-z)^{-m}\frac{\Gamma(m)\Gamma(a+b-m)}{\Gamma(a)\Gamma(b)}\sum_{n=0}^{m-1}\frac{(a-m)_n(b-m)_n}{(1-m)_nn!}(1-z)^n\\
            &+(-1)^m\frac{\Gamma(a+b-m)}{\Gamma(a-m)\Gamma(b-m)}\sum_{n=0}^\infty\frac{(a)_n(b)_n}{(n+m)!n!}\left\{h_n'-\log(1-z)\right\}(1-z)^n
        \end{align*}
        \hfill$|\arg(1-z)|<\pi\;,\;h_n'=\psi(n+1)+\psi(n+m+1)-\psi(a+n)-\psi(b+n)$.
    \end{enumerate}
    By substituting $a=\frac{\alpha+1}{2},b=\frac{\beta+1}{2},c=\gamma+1$ and $z=4x(1-x)$, we find that the analytic continuation of $\sum_{n=0}^\infty A_n'x^n$ has a branch point or a pole at $x=\frac{1}{2}$ when $\frac{\alpha+1}{2},\frac{\beta+1}{2},\gamma+1,\frac{\alpha+\beta+1}{2}-\gamma\not\in\mathbb{Z}_{\leq0}$. Therefore the radius of convergence of $\sum_{n=0}^\infty A_n'x^n$ is $\frac{1}{2}$ or less, hence we see from (\upshape\ref{e2.7}) that $\lim_{n\rightarrow\infty}A_{n+1}'/A_n'=2$. This completes the proof.
\end{proof}

\begin{thm}\label{t2.2} Assume that $\frac{\alpha+1}{2},\frac{\beta+1}{2},\gamma+1,\frac{\alpha+\beta+1}{2}-\gamma\not\in\mathbb{Z}_{\leq0}$. Then,
    $$\alpha\left(\alpha-2\gamma-1\right)H(\alpha,\beta;\gamma)-(\alpha-1)\left(\alpha+\beta-2\gamma-1\right)H(\alpha-2,\beta;\gamma)+1=0,$$
    $$2\gamma\left(2\gamma-\alpha-\beta-1\right)H(\alpha,\beta;\gamma)-\left(2\gamma-\alpha-1\right)\left(2\gamma-\beta-1\right)H(\alpha,\beta;\gamma-1)+1=0.$$
\end{thm}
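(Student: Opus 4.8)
The plan is to establish both identities first on the open subregion of parameter space where the defining series of $H$ converges, and then propagate them to the full domain by analytic continuation. By Theorem~\ref{t2.1}, on $\Re(2\gamma-\alpha-\beta)<1$ we have $H(\alpha,\beta;\gamma)=\tfrac14\sum_{n=0}^\infty u_n$ with $u_n=\frac{\bigl(\frac12\bigr)_n(\gamma+1)_n}{\bigl(\frac{\alpha+1}{2}\bigr)_{n+1}\bigl(\frac{\beta+1}{2}\bigr)_{n+1}}$. Each identity shifts a single parameter by one step: $\frac{\alpha+1}{2}\mapsto\frac{\alpha+1}{2}-1$ in the first, $\gamma+1\mapsto\gamma$ in the second. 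Writing the shifted summand as a rational multiple of $u_n$, namely $\frac{\frac{\alpha+1}{2}+n}{\frac{\alpha-1}{2}}u_n$ in the first case and $\frac{\gamma}{\gamma+n}u_n$ in the second, and collecting the two polynomial coefficients, each identity reduces to a single-sum claim of the form $\sum_{n=0}^\infty R(n)u_n=\text{const}$ with $R$ an explicit rational function of $n$.

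Next I would verify this reduced identity by creative telescoping: exhibit a certificate $G_n=y_nu_n$, with $y_n$ rational in $n$, such that $R(n)u_n=G_{n+1}-G_n$, whence $\sum_{n=0}^\infty R(n)u_n=-G_0$ provided $G_n\to0$. For the first identity the certificate can be taken to be $G_n=2\bigl(\frac{\alpha+1}{2}+n\bigr)\bigl(\frac{\beta+1}{2}+n\bigr)u_n$: inserting $\frac{u_{n+1}}{u_n}=\frac{\bigl(\frac12+n\bigr)(\gamma+1+n)}{\bigl(\frac{\alpha+3}{2}+n\bigr)\bigl(\frac{\beta+3}{2}+n\bigr)}$ into $G_{n+1}-G_n$ makes the degree-two part cancel, and the surviving degree-one polynomial matches $R$ exactly when the coefficients are $\alpha(\alpha-2\gamma-1)$ and $(\alpha-1)(\alpha+\beta-2\gamma-1)$. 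Since $u_0=\bigl(\frac{\alpha+1}{2}\bigr)^{-1}\bigl(\frac{\beta+1}{2}\bigr)^{-1}$ one finds $G_0=2$, and this boundary value reproduces precisely the inhomogeneous constant $+1$ once the factor $\tfrac14$ relating $H$ to the series is restored. An analogous certificate, whose rational part now also carries the factor $\gamma+n$ coming from the $\gamma$-shift, handles the second identity.

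To legitimize $\sum R(n)u_n=-G_0$ I must check $G_n\to0$. As $G_n$ is a fixed quadratic multiple of $u_n$, the asymptotic $\frac{(p)_n}{(q)_n}=O\bigl(n^{\Re(p-q)}\bigr)$ used in the proof of Theorem~\ref{t2.1} gives $G_n=O\bigl(n^{(2\gamma-\alpha-\beta+1)/2}\bigr)$ for the first certificate, which tends to $0$ exactly on $\Re(2\gamma-\alpha-\beta)<-1$; there both series, for $(\alpha,\beta;\gamma)$ and for $(\alpha-2,\beta;\gamma)$, converge, so the telescoped identity holds on this nonempty open set (and on the analogous open set for the second identity). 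Finally I would remove the auxiliary restriction: fixing $\beta,\gamma$ and viewing the first identity as an equation in $\alpha$, Lemma~\ref{l2.3} makes every $H$ occurring in it regular on $\mathbb{C}$ minus a discrete set, so both sides are holomorphic there; as they agree for $\Re\alpha\gg0$ (which lies in the region just described), the identity theorem forces equality on the whole connected slice, and likewise for the second identity continued in $\gamma$. This covers the full domain $\frac{\alpha+1}{2},\frac{\beta+1}{2},\gamma+1,\frac{\alpha+\beta+1}{2}-\gamma\notin\mathbb{Z}_{\le0}$.

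The main obstacle is the telescoping step: guessing the correct certificate $y_n$ (equivalently, running Gosper's algorithm on $R(n)u_n$) and confirming that its boundary value $G_0$ returns exactly the prescribed constant rather than some other value. The polynomial bookkeeping that ties the certificate to the specific coefficients $\alpha(\alpha-2\gamma-1)$, $(\alpha-1)(\alpha+\beta-2\gamma-1)$, and their $\gamma$-shift analogues is where an arithmetic slip would most easily creep in, and a secondary point of care is ensuring the auxiliary convergence region is nonempty and meets the slice along which the continuation is performed.
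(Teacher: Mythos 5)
Your proposal takes essentially the same route as the paper: the paper likewise proves both identities on the region where the series of Theorem~\ref{t2.1} converges ($\Re(2\gamma-\alpha-\beta)<-1$ for the first, $<1$ for the second) by writing the shifted series as a rational multiple of $u_n$ and telescoping --- the paper's telescoped quantity $\frac{(\frac12)_n(\gamma+1)_n}{(\frac{\alpha+1}{2})_n(\frac{\beta+1}{2})_n}=\bigl(n+\frac{\alpha+1}{2}\bigr)\bigl(n+\frac{\beta+1}{2}\bigr)u_n$ is exactly your certificate up to a constant --- and then extends to the full domain via Lemma~\ref{l2.3} and the identity theorem, just as you do. The only blemish is a factor-of-two slip in the normalization: since $G_{n+1}-G_n=2\bigl[\bigl(n+\tfrac12\bigr)(n+\gamma+1)-\bigl(n+\tfrac{\alpha+1}{2}\bigr)\bigl(n+\tfrac{\beta+1}{2}\bigr)\bigr]u_n=\tfrac12 R(n)u_n$ for your $R$, the certificate must be $G_n=4\bigl(n+\frac{\alpha+1}{2}\bigr)\bigl(n+\frac{\beta+1}{2}\bigr)u_n$, so that $G_0=4$ and $-G_0/4=-1$ yields the constant $+1$; your $G_0=2$ would give $-\tfrac12$ instead.
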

\begin{proof}
    From Lemma~{\upshape\ref{l2.3}}, we see that $H(\alpha,\beta;\gamma)$ is regular in each variable because $\frac{\alpha+1}{2},\frac{\beta+1}{2},\gamma+1,\frac{\alpha+\beta+1}{2}-\gamma\not\in\mathbb{Z}_{\leq0}$. Therefore by the identity theorem, we only have to prove the formulas in the range of $\alpha,\beta$ and $\gamma$ which we can apply Theorem~{\upshape\ref{t2.1}}, i.e. the following holds:
    $$H(\alpha,\beta;\gamma)=\frac{1}{4}\sum_{n=0}^\infty\frac{\bigl(\frac{1}{2}\bigr)_n(\gamma+1)_n}{\bigl(\frac{\alpha+1}{2}\bigr)_{n+1}\bigl(\frac{\beta+1}{2}\bigr)_{n+1}}.$$
    
    We assume that $\Re(2\gamma-\alpha-\beta)<-1$ and that $\frac{\alpha-1}{2},\frac{\beta+1}{2},\gamma+1\not\in\mathbb{Z}_{\leq0}$ and we show the first formula as follows:
    \begin{align*}
        &\alpha\left(\alpha-2\gamma-1\right)H(\alpha,\beta;\gamma)-(\alpha-1)\left(\alpha+\beta-2\gamma-1\right)H(\alpha-2,\beta;\gamma)\\
        &=\sum_{n=0}^\infty\frac{\alpha}{2}\left(\frac{\alpha-1}{2}-\gamma\right)\frac{\bigl(\frac{1}{2}\bigr)_n(\gamma+1)_n}{\bigl(\frac{\alpha+1}{2}\bigr)_{n+1}\bigl(\frac{\beta+1}{2}\bigr)_{n+1}}-\frac{\alpha-1}{2}\left(\frac{\alpha+\beta-1}{2}-\gamma\right)\frac{\bigl(\frac{1}{2}\bigr)_n(\gamma+1)_n}{\bigl(\frac{\alpha-1}{2}\bigr)_{n+1}\bigl(\frac{\beta+1}{2}\bigr)_{n+1}}\\
        &=\sum_{n=0}^\infty\frac{\bigl(\frac{1}{2}\bigr)_n(\gamma+1)_n}{\bigl(\frac{\alpha+1}{2}\bigr)_n\bigl(\frac{\beta+1}{2}\bigr)_n}\left\{\frac{\frac{\alpha}{2}\Bigl(\frac{\alpha-1}{2}-\gamma\Bigr)}{\Bigl(n+\frac{\alpha+1}{2}\Bigr)\left(n+\frac{\beta+1}{2}\right)}-\frac{\frac{\alpha+\beta-1}{2}-\gamma}{n+\frac{\beta+1}{2}}\right\}\\
        &=\sum_{n=0}^\infty\frac{\bigl(\frac{1}{2}\bigr)_n(\gamma+1)_n}{\bigl(\frac{\alpha+1}{2}\bigr)_n\bigl(\frac{\beta+1}{2}\bigr)_n}\left\{\frac{\left(n+\frac{1}{2}\right)(n+\gamma+1)}{\Bigl(n+\frac{\alpha+1}{2}\Bigr)\left(n+\frac{\beta+1}{2}\right)}-1\right\}\\
        &=\sum_{n=0}^\infty\frac{\bigl(\frac{1}{2}\bigr)_{n+1}(\gamma+1)_{n+1}}{\bigl(\frac{\alpha+1}{2}\bigr)_{n+1}\bigl(\frac{\beta+1}{2}\bigr)_{n+1}}-\frac{\bigl(\frac{1}{2}\bigr)_n(\gamma+1)_n}{\bigl(\frac{\alpha+1}{2}\bigr)_n\bigl(\frac{\beta+1}{2}\bigr)_n}=-1.
    \end{align*}

    Next we assume that $\Re(2\gamma-\alpha-\beta)<1$ and that $\frac{\alpha+1}{2},\frac{\beta+1}{2},\gamma\not\in\mathbb{Z}_{\leq0}$ and we show the second formula as follows:
    \begin{align*}
        &2\gamma\left(2\gamma-\alpha-\beta-1\right)H(\alpha,\beta;\gamma)-\left(2\gamma-\alpha-1\right)\left(2\gamma-\beta-1\right)H(\alpha,\beta;\gamma-1)\\
        &=\sum_{n=0}^\infty\gamma\left(\gamma-\frac{\alpha+\beta+1}{2}\right)\frac{\bigl(\frac{1}{2}\bigr)_n(\gamma+1)_n}{\bigl(\frac{\alpha+1}{2}\bigr)_{n+1}\bigl(\frac{\beta+1}{2}\bigr)_{n+1}}-\left(\gamma-\frac{\alpha+1}{2}\right)\left(\gamma-\frac{\beta+1}{2}\right)\frac{\bigl(\frac{1}{2}\bigr)_n(\gamma)_n}{\bigl(\frac{\alpha+1}{2}\bigr)_{n+1}\bigl(\frac{\beta+1}{2}\bigr)_{n+1}}\\
        &=\sum_{n=0}^\infty\frac{\bigl(\frac{1}{2}\bigr)_n(\gamma)_n}{\bigl(\frac{\alpha+1}{2}\bigr)_n\bigl(\frac{\beta+1}{2}\bigr)_n}\left\{\frac{\left(\gamma-\frac{\alpha+\beta+1}{2}\right)(n+\gamma)}{\Bigl(n+\frac{\alpha+1}{2}\Bigr)\left(n+\frac{\beta+1}{2}\right)}-\frac{\Bigl(\gamma-\frac{\alpha+1}{2}\Bigr)\left(\gamma-\frac{\beta+1}{2}\right)}{\Bigl(n+\frac{\alpha+1}{2}\Bigr)\left(n+\frac{\beta+1}{2}\right)}\right\}\\
        &=\sum_{n=0}^\infty\frac{\bigl(\frac{1}{2}\bigr)_n(\gamma)_n}{\bigl(\frac{\alpha+1}{2}\bigr)_n\bigl(\frac{\beta+1}{2}\bigr)_n}\left\{\frac{\left(n+\frac{1}{2}\right)(n+\gamma)}{\Bigl(n+\frac{\alpha+1}{2}\Bigr)\left(n+\frac{\beta+1}{2}\right)}-1\right\}\\
        &=\sum_{n=0}^\infty\frac{\bigl(\frac{1}{2}\bigr)_{n+1}(\gamma)_{n+1}}{\bigl(\frac{\alpha+1}{2}\bigr)_{n+1}\bigl(\frac{\beta+1}{2}\bigr)_{n+1}}-\frac{\bigl(\frac{1}{2}\bigr)_n(\gamma)_n}{\bigl(\frac{\alpha+1}{2}\bigr)_n\bigl(\frac{\beta+1}{2}\bigr)_n}=-1.
    \end{align*}
\end{proof}

Many of the Ramanujan Machine's conjectures can be obtained as corollaries of Theorems~{\upshape\ref{t2.1}} and~{\upshape\ref{t2.2}}. The following are corollaries involving Catalan's constant $G$.
 
\begin{cor}\label{c1}
    $$\frac{1}{2G}=\textup{CF}\left[3n^2+3n+1,-2n^4\right]$$
\end{cor}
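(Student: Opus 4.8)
The plan is to obtain Corollary \ref{c1} as the special case $\alpha=\beta=\gamma=0$ of Theorem \ref{t2.1}, and then to evaluate the resulting series in terms of Catalan's constant $G$.

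First I would pin down the parameters by matching polynomials. Writing $f(n)=n(n+\alpha)(n+\beta)$, a direct expansion gives
\[
\Delta f(n)=3n^2+\bigl(3+2(\alpha+\beta)\bigr)n+\bigl(1+\alpha+\beta+\alpha\beta\bigr),
\]
so requiring $\Delta f(n)=3n^2+3n+1$ forces $\alpha+\beta=0$ and $\alpha\beta=0$, hence $\alpha=\beta=0$. With these values $-2n(n+\alpha)(n+\beta)(n+\gamma)=-2n^3(n+\gamma)$, and matching this against $-2n^4$ forces $\gamma=0$. Thus the continued fraction in the corollary is precisely the $\alpha=\beta=\gamma=0$ instance of Theorem \ref{t2.1}. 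I would then check its hypotheses: here $\Re(2\gamma-\alpha-\beta)=0<1$, while $\tfrac{\alpha+1}{2}=\tfrac{\beta+1}{2}=\tfrac12$ and $\gamma+1=1$ all lie outside $\mathbb{Z}_{\le0}$, so the theorem applies and yields
\[
\textup{CF}\left[3n^2+3n+1,-2n^4\right]=4\left\{\sum_{n=0}^\infty\frac{\bigl(\tfrac12\bigr)_n(1)_n}{\bigl(\tfrac12\bigr)_{n+1}^2}\right\}^{-1}.
\]
It then remains only to show that the series in braces equals $8G$, for then the right-hand side collapses to $4/(8G)=1/(2G)$.

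For that step I would first simplify the general term. Using $(1)_n=n!$, the identity $\bigl(\tfrac12\bigr)_{n+1}=\bigl(\tfrac12\bigr)_n\bigl(n+\tfrac12\bigr)$, and $\bigl(\tfrac12\bigr)_n=\frac{(2n)!}{4^n n!}$, the term reduces to $\frac{4^{n+1}}{\binom{2n}{n}(2n+1)^2}$, so the series equals $4\sum_{n=0}^\infty\frac{4^n}{\binom{2n}{n}(2n+1)^2}$. I would then evaluate this central-binomial series as $2G$ via the generating function: term-by-term differentiation shows that $h(x)=\sum_{n\ge0}\frac{4^n x^{2n+1}}{(2n+1)^2\binom{2n}{n}}$ satisfies $h'(x)=\frac{\arcsin x}{x\sqrt{1-x^2}}$, so that $h(1)=\int_0^1\frac{\arcsin x}{x\sqrt{1-x^2}}\,dx=\int_0^{\pi/2}\frac{\theta}{\sin\theta}\,d\theta=2G$ after the substitution $x=\sin\theta$. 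Equivalently one can recognize the series as $4\,{}_3F_2\!\left(1,1,\tfrac12;\tfrac32,\tfrac32;1\right)$ and invoke the classical value ${}_3F_2\!\left(1,1,\tfrac12;\tfrac32,\tfrac32;1\right)=2G$. Either route gives $8G$, and hence $\textup{CF}\left[3n^2+3n+1,-2n^4\right]=1/(2G)$.

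The parameter matching and the verification of the hypotheses of Theorem \ref{t2.1} are routine; the one substantive point — and the only place where the constant $G$ actually enters — is the closed-form evaluation of the central-binomial (equivalently ${}_3F_2$) series as $2G$. That is the step I expect to carry the real content. In executing it I would be careful to justify the term-by-term differentiation and the interchange of summation and integration at $x=1$, which is legitimate since the general term is $O(n^{-3/2})$ and the series converges absolutely there.
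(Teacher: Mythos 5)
Your proposal is correct and follows the same route as the paper: both reduce the corollary to the $\alpha=\beta=\gamma=0$ case of Theorem~\ref{t2.1} and simplify the resulting series to the central-binomial sum $4\sum_{n\ge 0}4^n/\bigl(\binom{2n}{n}(2n+1)^2\bigr)$. The single point of divergence is the last step: the paper disposes of the evaluation $\sum_{n\ge 0}4^n/\bigl(\binom{2n}{n}(2n+1)^2\bigr)=2G$ by citing \cite[Eq.~3.114]{GS}, whereas you prove it yourself, via $h'(x)=\arcsin x/(x\sqrt{1-x^2})$ and the classical integral $\int_0^{\pi/2}\theta/\sin\theta\,\mathrm{d}\theta=2G$ (equivalently the known value of ${}_3F_2\bigl(1,1,\tfrac12;\tfrac32,\tfrac32;1\bigr)$). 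Your version is therefore self-contained where the paper leans on a reference, at the modest cost of the convergence/interchange justifications you already flag; both evaluations of course agree, and your parameter matching and verification of the hypotheses of Theorem~\ref{t2.1} are exactly what the paper leaves implicit.
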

\begin{cor}\label{c2}
    $$\frac{2}{2G-1}=\textup{CF}\left[3n^2+3n+1,-2n^3(n+1)\right]$$
\end{cor}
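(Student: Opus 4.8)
The plan is to recognize Corollary~\ref{c2} as the instance $\alpha=\beta=0$, $\gamma=1$ of the general framework and to deduce it from Corollary~\ref{c1} by means of the second functional equation in Theorem~\ref{t2.2}. First I would match the data: since $\Delta\bigl(n(n+\alpha)(n+\beta)\bigr)=3n^2+3n+1+(\alpha+\beta)(2n+1)+\alpha\beta$, the requirement that the $a$-polynomial equal $3n^2+3n+1$ forces $\alpha=\beta=0$; then $-2n(n+\alpha)(n+\beta)(n+\gamma)=-2n^3(n+\gamma)$, and matching $-2n^3(n+1)$ gives $\gamma=1$. Thus the continued fraction in the statement is exactly the one defining $H(0,0;1)$, and the claim is equivalent to $H(0,0;1)=\tfrac{2G-1}{2}$.

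Next I would observe that Theorem~\ref{t2.1} cannot be invoked directly here, because $\Re(2\gamma-\alpha-\beta)=2\ge 1$; indeed at these parameters $\epsilon=\Re\bigl(\tfrac{\alpha+\beta+1}{2}-\gamma\bigr)=-\tfrac12$, so the defining series has terms of order $n^{-1/2}$ and diverges. This is precisely the situation that the analytic continuation of Lemma~\ref{l2.3} and the functional equations of Theorem~\ref{t2.2} are designed to cover. I would then verify that the hypotheses of Theorem~\ref{t2.2} hold at $\alpha=\beta=0$, $\gamma=1$: the quantities $\tfrac12,\tfrac12,\gamma+1=2$ and $\tfrac{\alpha+\beta+1}{2}-\gamma=-\tfrac12$ are none of them in $\mathbb{Z}_{\leq 0}$, so $H(0,0;1)$ and $H(0,0;0)$ are both regular and the second relation is applicable.

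I would then substitute $\alpha=\beta=0$, $\gamma=1$ into
$$2\gamma(2\gamma-\alpha-\beta-1)H(\alpha,\beta;\gamma)-(2\gamma-\alpha-1)(2\gamma-\beta-1)H(\alpha,\beta;\gamma-1)+1=0,$$
which collapses to $2\,H(0,0;1)-H(0,0;0)+1=0$. Corollary~\ref{c1} gives $\textup{CF}\left[3n^2+3n+1,-2n^4\right]=\tfrac{1}{2G}$, hence $H(0,0;0)=2G$, and solving yields $H(0,0;1)=\tfrac{2G-1}{2}$. Inverting, $\textup{CF}\left[3n^2+3n+1,-2n^3(n+1)\right]=1/H(0,0;1)=\tfrac{2}{2G-1}$, as claimed.

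I do not anticipate a genuine obstacle: the computation amounts to a single step of the recursion in $\gamma$ together with the known base value from Corollary~\ref{c1}. The only point demanding care is conceptual rather than technical, namely recognizing that the direct hypergeometric series diverges so that Theorem~\ref{t2.1} is unavailable, and that $H(0,0;1)$ must instead be accessed through the functional equation built on the analytically continued $H$. Checking the non-integrality hypotheses of Theorem~\ref{t2.2} at the specific parameters $\alpha=\beta=0$, $\gamma=1$ is the one place where a slip could occur.
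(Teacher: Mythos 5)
Your proof is correct and follows essentially the same route as the paper: the paper likewise computes $\textup{CF}[3n^2+3n+1,-2n^3(n+1)]=1/H(0,0;1)$ via the second functional equation of Theorem~\ref{t2.2}, which gives $H(0,0;1)=\frac{H(0,0;0)-1}{2}$, combined with $H(0,0;0)=2G$ from Corollary~\ref{c1}. Your additional remarks (checking the non-integrality hypotheses and noting that the series of Theorem~\ref{t2.1} diverges at these parameters, so the analytic continuation is genuinely needed) are accurate and merely make explicit what the paper leaves implicit.
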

\begin{cor}\label{c3}
    $$\frac{24}{18G-11}=\textup{CF}[3n^2+3n+1,-2n^3(n+2)]$$
\end{cor}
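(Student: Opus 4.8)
The plan is to recognize Corollary~\ref{c3} as a special instance of Theorem~\ref{t2.1} and then climb one step up the $\gamma$-ladder of Theorem~\ref{t2.2}, using the already-established Corollary~\ref{c2} as input. First I would match parameters. Since $\Delta\bigl(n^3\bigr)=(n+1)^3-n^3=3n^2+3n+1$ and $-2n^3(n+2)=-2n(n+0)(n+0)(n+2)$, the continued fraction in Corollary~\ref{c3} is exactly $\textup{CF}[\Delta(n(n+\alpha)(n+\beta)),-2n(n+\alpha)(n+\beta)(n+\gamma)]$ with $\alpha=\beta=0$ and $\gamma=2$; hence its reciprocal is $H(0,0;2)$ in the notation of Lemma~\ref{l2.3}. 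Before proceeding I would verify the regularity hypotheses: for $(\alpha,\beta,\gamma)=(0,0,2)$ we have $\tfrac{\alpha+1}{2}=\tfrac{\beta+1}{2}=\tfrac12$, $\gamma+1=3$ and $\tfrac{\alpha+\beta+1}{2}-\gamma=-\tfrac32$, none of which lie in $\mathbb{Z}_{\le0}$, so $H(0,0;2)$ is well defined and Theorem~\ref{t2.2} applies.

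Next I would invoke the second recurrence of Theorem~\ref{t2.2}. Substituting $\alpha=\beta=0$, $\gamma=2$ gives the coefficients $2\gamma(2\gamma-\alpha-\beta-1)=4\cdot3=12$ and $(2\gamma-\alpha-1)(2\gamma-\beta-1)=3\cdot3=9$, so the relation reads $12\,H(0,0;2)-9\,H(0,0;1)+1=0$. Corollary~\ref{c2} states $\textup{CF}[3n^2+3n+1,-2n^3(n+1)]=\tfrac{2}{2G-1}$, i.e. $H(0,0;1)=\tfrac{2G-1}{2}$. Substituting and solving yields $12\,H(0,0;2)=\tfrac{9(2G-1)}{2}-1=\tfrac{18G-11}{2}$, whence $H(0,0;2)=\tfrac{18G-11}{24}$, and taking reciprocals gives $\textup{CF}[3n^2+3n+1,-2n^3(n+2)]=\tfrac{24}{18G-11}$.

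The computation itself is routine; the only point requiring care is that for $(\alpha,\beta,\gamma)=(0,0,2)$ we have $\Re(2\gamma-\alpha-\beta)=4\ge1$, so the absolutely convergent series manipulation carried out inside the proof of Theorem~\ref{t2.2} does not directly apply here. This is not an obstacle, however, because the \emph{statement} of Theorem~\ref{t2.2} already holds at these parameters: Lemma~\ref{l2.3} guarantees that $H$ is regular in each variable, and the recurrence was extended off the convergence region by the identity theorem. The genuinely nontrivial ingredient is hidden upstream, in that the whole ladder rests on the base value $H(0,0;0)=2G$ of Corollary~\ref{c1}, which amounts to recognizing the hypergeometric series $\tfrac14\sum_{n\ge0}\tfrac{(1/2)_n(1)_n}{((1/2)_{n+1})^2}$ as Catalan's constant. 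Granting Corollaries~\ref{c1} and~\ref{c2}, the proof of Corollary~\ref{c3} reduces to the single recurrence step above.
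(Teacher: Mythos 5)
Your proof is correct and follows essentially the same route as the paper: both identify the continued fraction as $1/H(0,0;2)$ and apply the second recurrence of Theorem~\ref{t2.2} with $\alpha=\beta=0$, $\gamma=2$ to get $12\,H(0,0;2)-9\,H(0,0;1)+1=0$, then feed in Corollary~\ref{c2}. Your extra remarks on the regularity hypotheses and on why the recurrence is valid outside the convergence region of Theorem~\ref{t2.1} make explicit what the paper leaves implicit, but the argument is the same.
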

\begin{cor}\label{c4}
    $$\frac{720}{450G-299}=\textup{CF}[3n^2+3n+1,-2n^3(n+3)]$$
\end{cor}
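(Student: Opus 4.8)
The plan is to recognize $\textup{CF}[3n^2+3n+1,-2n^3(n+3)]$ as the reciprocal of $H(0,0;3)$ and to evaluate $H(0,0;3)$ by running the $\gamma$-recurrence of Theorem~\ref{t2.2} upward from the base values supplied by Corollaries~\ref{c1}--\ref{c3}. First I would fix the parameters $\alpha,\beta,\gamma$ of Theorem~\ref{t2.1}: expanding $\Delta\bigl(n(n+\alpha)(n+\beta)\bigr)=3n^2+\bigl(3+2(\alpha+\beta)\bigr)n+\bigl(1+\alpha+\beta+\alpha\beta\bigr)$ and matching with $3n^2+3n+1$ forces $\alpha+\beta=0$ and $\alpha\beta=0$, hence $\alpha=\beta=0$. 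With these choices $-2n(n+\alpha)(n+\beta)(n+\gamma)=-2n^3(n+\gamma)$, and comparison with $-2n^3(n+3)$ gives $\gamma=3$, so the continued fraction equals $1/H(0,0;3)$ in the notation of Lemma~\ref{l2.3}.

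A direct appeal to Theorem~\ref{t2.1} is unavailable because $\Re(2\gamma-\alpha-\beta)=6\ge1$ and the defining series diverges. This is exactly the regime handled by Theorem~\ref{t2.2}: specializing its second identity to $\alpha=\beta=0$ gives the first-order recurrence
$$2\gamma(2\gamma-1)H(0,0;\gamma)-(2\gamma-1)^2H(0,0;\gamma-1)+1=0.$$
I would verify that the admissibility hypotheses $\tfrac{\alpha+1}{2},\tfrac{\beta+1}{2},\gamma+1,\tfrac{\alpha+\beta+1}{2}-\gamma\not\in\mathbb{Z}_{\le0}$ hold at $\gamma=1,2,3$, where they read $\tfrac12,\tfrac12,\gamma+1,\tfrac12-\gamma$, so the recurrence is legitimate along the whole ladder.

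Finally I would run the recurrence from the base case $H(0,0;0)=2G$ of Corollary~\ref{c1}, obtaining $H(0,0;1)=\tfrac{2G-1}{2}$ and $H(0,0;2)=\tfrac{18G-11}{24}$ (which recover Corollaries~\ref{c2} and~\ref{c3} and serve as consistency checks), and then at $\gamma=3$, $30\,H(0,0;3)=25\,H(0,0;2)-1=\tfrac{450G-299}{24}$, so $H(0,0;3)=\tfrac{450G-299}{720}$; inverting yields $\textup{CF}[3n^2+3n+1,-2n^3(n+3)]=\tfrac{720}{450G-299}$. The computation is mechanical once Theorem~\ref{t2.2} is available, and if Corollaries~\ref{c2}--\ref{c3} are treated as already established then only the single step from $H(0,0;2)$ is needed. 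The one place with genuine content is the base case: there $\Re(2\gamma-\alpha-\beta)=0<1$, so Theorem~\ref{t2.1} does apply, and the main obstacle is identifying the resulting series with the classical evaluation $\sum_{n\ge0}4^n/\bigl((2n+1)^2\binom{2n}{n}\bigr)=2G$ of Catalan's constant that anchors the whole recursion.
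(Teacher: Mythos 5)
Your proposal is correct and takes essentially the same route as the paper: the paper also identifies the continued fraction with $1/H(0,0;3)$ and applies the second identity of Theorem~\ref{t2.2} (specialized to $\alpha=\beta=0$) to Corollary~\ref{c3}, computing $H(0,0;3)=\frac{25H(0,0;2)-1}{30}=\frac{450G-299}{720}$, where Corollaries~\ref{c1}--\ref{c3} form exactly the recursion ladder you describe starting from $H(0,0;0)=2G$. Your arithmetic agrees with the paper's at every step, and your explicit check of the admissibility hypotheses is a detail the paper leaves implicit.
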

\begin{cor}\label{c5}
    $$\frac{2}{2G-1}=\textup{CF}[3n^2+7n+3,-2n^3(n+2)]$$
\end{cor}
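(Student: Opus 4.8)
The plan is to recognise Corollary~\ref{c5} as a member of the family treated in Theorems~\ref{t2.1} and~\ref{t2.2}, fix its parameters by matching coefficients, and then reduce it to the Catalan evaluation of Corollary~\ref{c1} using the first recurrence of Theorem~\ref{t2.2}. First I would identify $\alpha,\beta,\gamma$. Matching $a(n)=3n^2+7n+3$ against $\Delta\bigl(n(n+\alpha)(n+\beta)\bigr)=3n^2+(3+2\alpha+2\beta)n+(1+\alpha)(1+\beta)$ forces $\alpha+\beta=2$ and $(1+\alpha)(1+\beta)=3$, hence $\alpha\beta=0$ and $\{\alpha,\beta\}=\{0,2\}$. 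Matching $b(n)=-2n^3(n+2)$ against $-2n(n+\alpha)(n+\beta)(n+\gamma)=-2n^2(n+2)(n+\gamma)$ then forces $\gamma=0$. Thus Corollary~\ref{c5} is exactly the assertion $H(0,2;0)=\tfrac{2G-1}{2}$, with $H$ as in Lemma~\ref{l2.3}. Since $a(n)$ and $b(n)$ are symmetric in $\alpha,\beta$, so is the continued fraction, giving $H(0,2;0)=H(2,0;0)$.

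Next I would invoke the first recurrence of Theorem~\ref{t2.2} at $(\alpha,\beta,\gamma)=(2,0,0)$. There $\alpha(\alpha-2\gamma-1)=2\cdot1$ and $(\alpha-1)(\alpha+\beta-2\gamma-1)=1\cdot1$, so the recurrence collapses to $2H(2,0;0)-H(0,0;0)+1=0$. Corollary~\ref{c1} states $\textup{CF}[3n^2+3n+1,-2n^4]=\tfrac{1}{2G}$, which is precisely $H(0,0;0)=2G$. Substituting gives $2H(2,0;0)=2G-1$, i.e.\ $H(2,0;0)=\tfrac{2G-1}{2}$; by the symmetry above $H(0,2;0)=\tfrac{2G-1}{2}$, and taking reciprocals yields $\textup{CF}[3n^2+7n+3,-2n^3(n+2)]=\tfrac{2}{2G-1}$.

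Before concluding I would verify the hypotheses of Theorem~\ref{t2.2} at both $(2,0,0)$ and $(0,0,0)$: the quantities $\tfrac{\alpha+1}{2},\tfrac{\beta+1}{2},\gamma+1,\tfrac{\alpha+\beta+1}{2}-\gamma$ equal $\tfrac32,\tfrac12,1,\tfrac32$ and $\tfrac12,\tfrac12,1,\tfrac12$ respectively, none lying in $\mathbb{Z}_{\le0}$, so the recurrence is valid. The genuinely substantive input is the base evaluation $H(0,0;0)=2G$; everything else is parameter bookkeeping, the $\alpha\leftrightarrow\beta$ symmetry, and one line of algebra. Hence the real obstacle is deferred to Corollary~\ref{c1}, whose proof must establish the Catalan identity $\sum_{n=0}^\infty\frac{4^n}{(2n+1)^2\binom{2n}{n}}=2G$ (equivalently $\int_0^{\pi/2}\frac{\theta}{\sin\theta}\,d\theta=2G$) by specialising the hypergeometric sum of Theorem~\ref{t2.1} to $\alpha=\beta=\gamma=0$. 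Granting that identity, Corollary~\ref{c5} follows immediately from the recurrence, and this $\alpha$-shift is exactly what is needed to pass from the $\gamma$-chain of Corollaries~\ref{c1}--\ref{c4} to the first case with $\alpha\neq\beta$.
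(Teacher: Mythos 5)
Your proof is correct and follows essentially the same route as the paper: the paper also identifies the continued fraction as $1/H(2,0;0)$, applies the first recurrence of Theorem~\ref{t2.2} with $(\alpha,\beta,\gamma)=(2,0,0)$ to get $H(2,0;0)=\tfrac{H(0,0;0)-1}{2}$, and invokes Corollary~\ref{c1} for $H(0,0;0)=2G$. Your additional coefficient matching, the $\alpha\leftrightarrow\beta$ symmetry remark, and the hypothesis checks are just explicit bookkeeping that the paper leaves implicit.
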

\begin{cor}\label{c6}
    $$\frac{4}{2G+1}=\textup{CF}[3n^2+7n+3,-2n^2(n+1)(n+2)]$$
\end{cor}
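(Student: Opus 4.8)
The plan is to recognize the corollary as the special case $(\alpha,\beta,\gamma)=(0,2,1)$ of the family treated in Theorems~\ref{t2.1} and~\ref{t2.2}, verify the admissibility hypotheses, and then obtain the value from the contiguous relations of Theorem~\ref{t2.2} together with the already-proved Corollary~\ref{c2}. First I would match the data of the continued fraction against the template of Theorem~\ref{t2.1}. Writing $f(n)=n(n+\alpha)(n+\beta)$ and expanding $\Delta f(n)=3n^2+(3+2(\alpha+\beta))n+(1+\alpha+\beta+\alpha\beta)$, the requirement $\Delta f(n)=3n^2+7n+3$ forces $\alpha+\beta=2$ and $\alpha\beta=0$, hence $\{\alpha,\beta\}=\{0,2\}$. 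With this choice $n(n+\alpha)(n+\beta)=n^2(n+2)$, so $-2n(n+\alpha)(n+\beta)(n+\gamma)=-2n^2(n+2)(n+\gamma)$; comparing with $-2n^2(n+1)(n+2)$ gives $\gamma=1$. Thus the continued fraction is the case $(\alpha,\beta,\gamma)=(0,2,1)$.

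Next I would check admissibility: $\tfrac{\alpha+1}{2}=\tfrac12$, $\tfrac{\beta+1}{2}=\tfrac32$, $\gamma+1=2$, and $\tfrac{\alpha+\beta+1}{2}-\gamma=\tfrac12$ all lie outside $\mathbb{Z}_{\le 0}$, so $H(0,2;1)$ is defined and regular by Lemma~\ref{l2.3} and the relations of Theorem~\ref{t2.2} apply; moreover $\Re(2\gamma-\alpha-\beta)=0<1$, so the series of Theorem~\ref{t2.1} converges and an alternative direct route is available. Since $H$ is by definition the reciprocal of the continued fraction, proving the corollary is equivalent to showing $H(0,2;1)=\tfrac{2G+1}{4}$.

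The cleanest route to this value uses Theorem~\ref{t2.2}. Because both $a_n$ and $b_n$ depend on $\alpha,\beta$ only through the symmetric factor $(n+\alpha)(n+\beta)$, the function $H$ is symmetric in its first two arguments, so $H(0,2;1)=H(2,0;1)$. Applying the first relation of Theorem~\ref{t2.2} with $(\alpha,\beta,\gamma)=(2,0,1)$ — whose coefficients specialize to $\alpha(\alpha-2\gamma-1)=-2$ and $(\alpha-1)(\alpha+\beta-2\gamma-1)=-1$ — yields $-2H(2,0;1)+H(0,0;1)+1=0$, that is $H(0,2;1)=\tfrac12\bigl(H(0,0;1)+1\bigr)$. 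It remains to supply $H(0,0;1)$, which is precisely the content of Corollary~\ref{c2}: the continued fraction there is the case $(\alpha,\beta,\gamma)=(0,0,1)$, so $H(0,0;1)=\tfrac{2G-1}{2}$. Substituting gives $H(0,2;1)=\tfrac12\bigl(\tfrac{2G-1}{2}+1\bigr)=\tfrac{2G+1}{4}$, and inverting recovers the claimed value $\tfrac{4}{2G+1}$.

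The steps I expect to require the most care are the parameter identification (the constraint $\alpha\beta=0$ must be used to pin down $\{\alpha,\beta\}=\{0,2\}$ rather than another pair summing to $2$) and the appeal to the symmetry $H(0,2;1)=H(2,0;1)$, since Theorem~\ref{t2.2} supplies a step in $\alpha$ but none directly in $\beta$. The genuinely substantive input, however, is the single base evaluation $H(0,0;0)=2G$ underlying Corollaries~\ref{c1} and~\ref{c2}: through the second relation of Theorem~\ref{t2.2} one has $H(0,0;1)=\tfrac12(H(0,0;0)-1)$, and $H(0,0;0)=2G$ is the classical central-binomial identity $\sum_{n=0}^\infty 4^n/\bigl((2n+1)^2\binom{2n}{n}\bigr)=2G$ coming from Theorem~\ref{t2.1} at $(\alpha,\beta,\gamma)=(0,0,0)$. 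Should one wish to bypass Theorem~\ref{t2.2}, the alternative is to apply Theorem~\ref{t2.1} directly and evaluate $S=\sum_{n=0}^\infty \tfrac{(1/2)_n(2)_n}{(1/2)_{n+1}(3/2)_{n+1}}$; collapsing the Pochhammer symbols turns this into $\sum_{m\ge 1}\tfrac{2\cdot 4^m}{(2m-1)(2m+1)\binom{2m}{m}}$, and the obstacle becomes resumming this series to $2G+1$, which again reduces to the same classical $G$-identity.
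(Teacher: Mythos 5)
Your proof is correct and takes essentially the same route as the paper: a single contiguous relation from Theorem~\ref{t2.2} applied to a previously proved corollary, ultimately resting on the base evaluation $H(0,0;0)=2G$ of Corollary~\ref{c1}. The only difference is bookkeeping — you step in $\alpha$ via the first relation starting from Corollary~\ref{c2} ($H(0,0;1)=\tfrac{2G-1}{2}$), while the paper steps in $\gamma$ via the second relation starting from Corollary~\ref{c5} ($H(2,0;0)=\tfrac{2G-1}{2}$); both give $H(2,0;1)=\tfrac{2G+1}{4}$, and your final value $\tfrac{4}{2G+1}$ is the correct one (the fraction $\tfrac{16}{6G-1}$ displayed at the end of the paper's own proof of this corollary is a typo, its value having been swapped with that in the proof of Corollary~\ref{c7}).
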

\begin{cor}\label{c7}
    $$\frac{16}{6G-1}=\textup{CF}[3n^2+7n+3,-2n^2(n+2)^2]$$
\end{cor}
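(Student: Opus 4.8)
The plan is to recognize Corollary~\ref{c7} as an instance of the family in Theorem~\ref{t2.1} lying \emph{outside} its convergence range, which must therefore be reached through the descent relations of Theorem~\ref{t2.2}. \textbf{Step 1: match the polynomial data.} Expanding $\Delta\bigl(n(n+\alpha)(n+\beta)\bigr)=3n^2+(3+2\alpha+2\beta)n+(1+\alpha+\beta+\alpha\beta)$ and comparing with $3n^2+7n+3$ forces $\alpha+\beta=2$ and $\alpha\beta=0$, so $\{\alpha,\beta\}=\{0,2\}$. Choosing $\alpha=0,\beta=2$, the companion polynomial $-2n(n+\alpha)(n+\beta)(n+\gamma)$ becomes $-2n^2(n+2)(n+\gamma)$, and matching $-2n^2(n+2)^2$ gives $\gamma=2$. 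Hence the continued fraction equals $1/H(0,2;2)$ in the notation of Lemma~\ref{l2.3}.

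\textbf{Step 2: observe that Theorem~\ref{t2.1} does not apply, and descend.} Here $2\gamma-\alpha-\beta=2$, so the hypothesis $\Re(2\gamma-\alpha-\beta)<1$ fails and the defining series of $H(0,2;2)$ need not converge; this is precisely the regime Theorem~\ref{t2.2} governs. After verifying the admissibility conditions $\tfrac12,\tfrac32,3,-\tfrac12\notin\mathbb{Z}_{\leq0}$, I would apply the second ($\gamma$-lowering) identity at $(\alpha,\beta,\gamma)=(0,2,2)$, which after inserting the numerical coefficients collapses to $4\,H(0,2;2)-3\,H(0,2;1)+1=0$.

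\textbf{Step 3: evaluate the neighboring value.} At $(\alpha,\beta,\gamma)=(0,2,1)$ the convergence hypothesis now holds, since $2\gamma-\alpha-\beta=0<1$. The cleanest route is to notice that these parameters reproduce exactly the continued fraction of Corollary~\ref{c6}: the polynomial $a(n)$ is unchanged, while $-2n(n+0)(n+2)(n+1)=-2n^2(n+1)(n+2)$. Therefore $H(0,2;1)=1/\textup{CF}[3n^2+7n+3,-2n^2(n+1)(n+2)]=\tfrac{2G+1}{4}$. Substituting into the recurrence gives $4\,H(0,2;2)=\tfrac{3(2G+1)}{4}-1=\tfrac{6G-1}{4}$, whence $H(0,2;2)=\tfrac{6G-1}{16}$ and $\textup{CF}[3n^2+7n+3,-2n^2(n+2)^2]=\tfrac{16}{6G-1}$.

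The main obstacle is structural rather than computational: one must notice that Theorem~\ref{t2.1} is \emph{not} directly applicable because the series diverges at $(0,2,2)$, and that the remedy is to step down in $\gamma$ via Theorem~\ref{t2.2} into a convergent (and already-evaluated) neighbor. If an argument independent of Corollary~\ref{c6} is wanted, $H(0,2;1)$ can be computed straight from Theorem~\ref{t2.1}: simplifying the Pochhammer quotient recasts it as $\tfrac12\sum_{m\ge1}4^m\big/\bigl[(2m-1)(2m+1)\binom{2m}{m}\bigr]$, and the closed form $\tfrac{2G+1}{4}$ of this central-binomial series is then the sole analytic input; leaning on Corollary~\ref{c6} avoids even this.
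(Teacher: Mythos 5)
Your proposal is correct and takes essentially the same route as the paper: identify the continued fraction as $1/H(2,0;2)$ (your $H(0,2;2)$, identical by the $\alpha\leftrightarrow\beta$ symmetry), then apply the $\gamma$-lowering identity of Theorem~\ref{t2.2} to descend to $\gamma=1$ and plug in Corollary~\ref{c6}, giving $H=\frac{3H(2,0;1)-1}{4}=\frac{6G-1}{16}$. Incidentally, the paper's own proof follows exactly this chain but ends with a misprinted value $\frac{4}{2G+1}$ (apparently swapped with the final line of Corollary~\ref{c6}'s proof); your concluding value $\frac{16}{6G-1}$ is the correct one.
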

\begin{cor}\label{c8}
    $$\frac{288}{90G-31}=\textup{CF}[3n^2+7n+3,-2n^2(n+2)(n+3)]$$
\end{cor}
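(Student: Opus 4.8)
The plan is to obtain Corollary~\ref{c8} from the master formula of Theorem~\ref{t2.1} together with the contiguity relations of Theorem~\ref{t2.2}. First I would match the continued fraction data $a_n = 3n^2+7n+3$ and $b_n = -2n^2(n+2)(n+3)$ against the template $a_n = \Delta\bigl(n(n+\alpha)(n+\beta)\bigr)$, $b_n = -2n(n+\alpha)(n+\beta)(n+\gamma)$. Expanding $\Delta\bigl(n(n+\alpha)(n+\beta)\bigr) = 3n^2 + (3+2\alpha+2\beta)n + (1+\alpha+\beta+\alpha\beta)$ and comparing coefficients forces $\alpha+\beta = 2$ and $\alpha\beta = 0$, so $\{\alpha,\beta\} = \{0,2\}$; then $b_n = -2n^2(n+2)(n+\gamma)$ forces $\gamma = 3$. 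Hence the target equals $1/H(0,2;3)$.

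The obstruction is that Theorem~\ref{t2.1} requires $\Re(2\gamma-\alpha-\beta) < 1$, whereas here $2\gamma-\alpha-\beta = 4$, so the defining series diverges and the theorem does not apply directly. This is precisely the regime for which Theorem~\ref{t2.2} was proved, so I would use its second ($\gamma$-)recurrence to descend in $\gamma$ until Theorem~\ref{t2.1} becomes valid. Before invoking it I would verify the four non-integrality hypotheses at $\gamma = 1,2,3$ with $\alpha = 0,\beta = 2$: indeed $\tfrac{\alpha+1}{2} = \tfrac12$, $\tfrac{\beta+1}{2} = \tfrac32$, $\gamma+1 \in \{2,3,4\}$, and $\tfrac{\alpha+\beta+1}{2}-\gamma \in \{\tfrac12,-\tfrac12,-\tfrac32\}$ all avoid $\mathbb{Z}_{\leq 0}$, so the recurrence is legitimately available across the whole descent.

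Specializing the second recurrence to $\alpha = 0,\beta = 2$ it collapses to $(2\gamma-3)\bigl[2\gamma H(0,2;\gamma) - (2\gamma-1)H(0,2;\gamma-1)\bigr] = -1$. The base of the descent is $\gamma = 1$, where $\Re(2\gamma-\alpha-\beta) = 0 < 1$, so Theorem~\ref{t2.1} applies and supplies the value already recorded in Corollary~\ref{c6}, namely $H(0,2;1) = \tfrac{2G+1}{4}$. One step ($\gamma = 2$) gives $4H(0,2;2) = 3H(0,2;1) - 1 = \tfrac{6G-1}{4}$, i.e. $H(0,2;2) = \tfrac{6G-1}{16}$ (this is Corollary~\ref{c7}); a second step ($\gamma = 3$) gives $18H(0,2;3) = 15H(0,2;2) - 1 = \tfrac{90G-31}{16}$, so $H(0,2;3) = \tfrac{90G-31}{288}$, and inverting yields $\textup{CF}[3n^2+7n+3,-2n^2(n+2)(n+3)] = \tfrac{288}{90G-31}$.

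All the arithmetic content — in particular the appearance of Catalan's constant — is inherited from the single base case $\gamma = 1$ through Theorem~\ref{t2.1}; everything after that is a two-term linear recurrence with explicit rational coefficients. Consequently there is no serious analytic obstacle in this corollary: the points demanding the most care are the correct identification of $(\alpha,\beta,\gamma)$ and the tracking of signs and constants through the two recurrence steps, together with confirming that the non-integrality hypotheses of Theorem~\ref{t2.2} indeed hold at each intermediate value of $\gamma$.
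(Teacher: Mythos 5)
Your proposal is correct and takes essentially the same route as the paper: the paper also identifies the continued fraction as $1/H(2,0;3)$ and applies the second ($\gamma$-)recurrence of Theorem~\ref{t2.2} to the value $H(2,0;2)=\frac{6G-1}{16}$ of Corollary~\ref{c7}, obtaining $H(2,0;3)=\frac{15H(2,0;2)-1}{18}=\frac{90G-31}{288}$, which is exactly the last step of your descent. Your version merely unrolls one extra step (re-deriving Corollary~\ref{c7} from Corollary~\ref{c6}) and makes explicit the parameter matching and hypothesis checks that the paper leaves implicit.
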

\begin{cor}\label{c9}
    $$\frac{1}{2-2G}=\textup{CF}[3n^2+9n+7,-2n(n+1)^3]$$
\end{cor}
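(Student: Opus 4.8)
The plan is to realize the continued fraction of Corollary~\ref{c9} as a \emph{tail} of a member of the family already handled by Theorem~\ref{t2.1}. Write $H(\gamma):=H(0,0;\gamma)$, so that the parent continued fraction $\textup{CF}\!\left[3n^2+3n+1,\,-2n^3(n+\gamma)\right]$ equals $1/H(\gamma)$ and Corollary~\ref{c1} is exactly the statement $H(0)=2G$. Its tail $\textup{CF}[a_{n+1},b_{n+1}]$ has $a_{n+1}=3(n+1)^2+3(n+1)+1=3n^2+9n+7$ and $b_{n+1}=-2(n+1)^3(n+1+\gamma)$; at $\gamma=-1$ this is precisely Corollary~\ref{c9}. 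Using the elementary relation $P=a_0+b_1/T$ between a continued fraction $P$ and its tail $T$, and noting $a_0=1$, $b_1=-2(1+\gamma)$ for the parent, I get for $\gamma\neq-1$
\[
T(\gamma)=\frac{-2(1+\gamma)\,H(\gamma)}{1-H(\gamma)}.
\]

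Next I would let $\gamma\to-1$. This is the degenerate value $b_1=0$, where the parent collapses to $H(-1)=1$ — which is also what the second identity of Theorem~\ref{t2.2} gives at $\gamma=0$. Both the numerator and denominator of $T(\gamma)$ then vanish, and L'H\^opital's rule gives $T(-1)=2/H'(-1)$. To find $H'(-1)$ I would differentiate the second identity of Theorem~\ref{t2.2} with $\alpha=\beta=0$, namely $2\gamma(2\gamma-1)H(\gamma)-(2\gamma-1)^2H(\gamma-1)+1=0$, and set $\gamma=0$. The coefficient $2\gamma(2\gamma-1)$ of $H'(\gamma)$ vanishes there, so only $-2H(0)+4H(-1)-H'(-1)=0$ survives, i.e. $H'(-1)=4H(-1)-2H(0)=4-4G$ after inserting $H(-1)=1$ and $H(0)=2G$ from Corollary~\ref{c1}. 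Hence
\[
T(-1)=\frac{2}{4-4G}=\frac{1}{2-2G},
\]
which is the claim.

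The hard part is the analysis at the degenerate point $\gamma=-1$. Two facts must be pinned down. First, Corollary~\ref{c9}'s continued fraction must actually equal $\lim_{\gamma\to-1}T(\gamma)$, i.e. the value of the tail must be continuous as $\gamma$ crosses the value that kills $b_1$; I would get this from uniform convergence of the convergents for $\gamma$ near $-1$, using the kind of $O(k^{-\epsilon})$ estimate that appears in Lemma~\ref{l2.3}. Second, $H(\gamma)$ must be differentiable at $\gamma=-1$, a point Lemma~\ref{l2.3} conservatively excludes; this is legitimate because the series $\tfrac14\sum_{n\ge0}\bigl(\tfrac12\bigr)_n(\gamma+1)_n/\bigl(\tfrac12\bigr)_{n+1}^2$ representing $H(\gamma)$ converges locally uniformly for $\Re\gamma<\tfrac12$, so it continues $H$ analytically across $\gamma=-1$ (with $H(-1)=1$) and may be differentiated term by term, which also licenses differentiating the functional equation there. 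A self-contained alternative is to rerun the proof of Theorem~\ref{t2.1} directly on Corollary~\ref{c9}'s recurrence with $s=1$ and $c(n)=b(n)/n=-2(n+1)^3$: the identity $\Delta c(\vartheta+1)=-2a(\vartheta+1)$ still holds, so $\delta=\tfrac12$ again annihilates the $D^3$ and $D$ terms and leaves $(\vartheta+5)D^2y=4(\vartheta+2)^3y$. I expect this to be messier, since the term $(\vartheta+2)^3$ lacks the factor $(\vartheta+1)$ that let Theorem~\ref{t2.1} descend to a first-order recurrence, leaving a genuine three-term recurrence and a logarithmic connection problem at $x=0$; so the route through Theorem~\ref{t2.2} is the cleaner one.
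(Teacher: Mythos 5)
Your proposal is correct, and it follows the paper's overall strategy — realize the continued fraction as the tail of the family $\textup{CF}\bigl[3n^2+3n+1,-2n^3(n+\gamma)\bigr]$, combine the tail relation with the second identity of Theorem~\ref{t2.2} at $\alpha=\beta=0$ and with Corollary~\ref{c1}, then pass to the degenerate parameter value — but you execute the degenerate limit differently. The paper never differentiates anything: it substitutes the functional equation in the form $H(0,0;\gamma-1)=\bigl(2\gamma(2\gamma-1)H(0,0;\gamma)+1\bigr)/(2\gamma-1)^2$ into the tail formula $-2\gamma/\bigl(1/H(0,0;\gamma-1)-1\bigr)$, and the explicit factor $2\gamma$ produced by the functional equation cancels $b_1=-2\gamma$, leaving $\bigl(2\gamma(2\gamma-1)H(0,0;\gamma)+1\bigr)/\bigl((2\gamma-1)H(0,0;\gamma)-2\gamma+2\bigr)$, which is regular at $\gamma=0$; the limit then requires only continuity of $H(0,0;\cdot)$ at the \emph{good} parameter $0$, which Lemma~\ref{l2.3} covers. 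Your L'H\^opital computation is the infinitesimal version of that same cancellation: your values $H(-1)=1$, $H'(-1)=4H(-1)-2H(0)=4-4G$ and $T(-1)=2/H'(-1)=1/(2-2G)$ are all correct, but this route costs you exactly what the paper's arrangement avoids — you must continue $H$ analytically across $\gamma=-1$, a point excluded by the hypotheses of Lemma~\ref{l2.3}, and differentiate the functional equation there; your justification (locally uniform convergence of the series $\tfrac14\sum_n\bigl(\tfrac12\bigr)_n(\gamma+1)_n/\bigl(\tfrac12\bigr)_{n+1}^2$ for $\Re\gamma<\tfrac12$) is legitimate, just extra work. The one genuinely delicate step is common to both arguments and is not fully discharged by either: that the tail's value is continuous in $\gamma$ at the point where $b_1$ vanishes (the paper silently writes ``putting $\gamma\to 0$''; you at least flag it and sketch a uniform-convergence argument). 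Finally, your closing observation — that rerunning the proof of Theorem~\ref{t2.1} directly on this recurrence stalls because $(\vartheta+2)^3$ lacks the factor $(\vartheta+1)$ needed to reduce the order — is accurate, and explains why the paper also routes this corollary through Theorem~\ref{t2.2} rather than through Theorem~\ref{t2.1}.
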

\begin{cor}\label{c10}
    $$\frac{24}{18G-11}=\textup{CF}[3n^2+11n+5,-2n^3(n+4)]$$
\end{cor}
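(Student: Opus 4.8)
\section*{Proof proposal for Corollary~\ref{c10}}

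The plan is to recognise the continued fraction as an instance of Theorem~\ref{t2.1} and then use the shift relations of Theorem~\ref{t2.2} to reduce to the Catalan base case already recorded in Corollary~\ref{c1}. First I would match the data $a_n=3n^2+11n+5$ and $b_n=-2n^3(n+4)$ against the template of Theorem~\ref{t2.1}. Expanding the difference operator gives
$$\Delta\bigl(n(n+\alpha)(n+\beta)\bigr)=3n^2+\bigl(3+2(\alpha+\beta)\bigr)n+\bigl(1+\alpha+\beta+\alpha\beta\bigr),$$
so comparison with $3n^2+11n+5$ forces $\alpha+\beta=4$ and $\alpha\beta=0$, i.e. $\{\alpha,\beta\}=\{0,4\}$. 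With these values $n(n+\alpha)(n+\beta)=n^2(n+4)$, and matching $-2n^2(n+4)(n+\gamma)$ to $-2n^3(n+4)$ yields $\gamma=0$. Thus the relevant parameters are $(\alpha,\beta,\gamma)=(0,4,0)$; since $\Re(2\gamma-\alpha-\beta)=-4<1$ and $\tfrac{1}{2},\tfrac{5}{2},1\notin\mathbb{Z}_{\leq0}$, Theorem~\ref{t2.1} applies and identifies $\textup{CF}[3n^2+11n+5,-2n^3(n+4)]$ with $1/H(0,4;0)$.

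Next I would climb down in $\beta$ using Theorem~\ref{t2.2}. Because the defining sum of $H$ (equivalently the continued fraction) is symmetric under $\alpha\leftrightarrow\beta$, the first relation of Theorem~\ref{t2.2} may be read with the roles of $\alpha$ and $\beta$ interchanged, giving
$$\beta(\beta-2\gamma-1)H(\alpha,\beta;\gamma)-(\beta-1)(\alpha+\beta-2\gamma-1)H(\alpha,\beta-2;\gamma)+1=0.$$
Specialising to $\alpha=\gamma=0$ this reads $\beta(\beta-1)H(0,\beta;0)-(\beta-1)^2H(0,\beta-2;0)+1=0$. Taking $\beta=4$ and $\beta=2$ produces the two linear equations
$$12\,H(0,4;0)-9\,H(0,2;0)+1=0,\qquad 2\,H(0,2;0)-H(0,0;0)+1=0,$$
both valid since the regularity hypotheses $\tfrac{\alpha+1}{2},\tfrac{\beta+1}{2},\gamma+1,\tfrac{\alpha+\beta+1}{2}-\gamma\notin\mathbb{Z}_{\leq0}$ of Theorem~\ref{t2.2} hold at $(0,4,0)$, $(0,2,0)$ and $(0,0,0)$.

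Finally I would feed in the base case. Corollary~\ref{c1} gives $\textup{CF}[3n^2+3n+1,-2n^4]=\tfrac{1}{2G}$, i.e. $H(0,0;0)=2G$. Substituting into the second equation yields $H(0,2;0)=G-\tfrac{1}{2}$, and then the first equation gives $12\,H(0,4;0)=9\bigl(G-\tfrac{1}{2}\bigr)-1=9G-\tfrac{11}{2}$, so $H(0,4;0)=\tfrac{18G-11}{24}$. Inverting, $\textup{CF}[3n^2+11n+5,-2n^3(n+4)]=1/H(0,4;0)=\tfrac{24}{18G-11}$, as claimed.

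The computation is essentially mechanical once the three ingredients are in place, so the only genuine points requiring care are, first, confirming that the symmetrised version of Theorem~\ref{t2.2} is legitimate and that its non-vanishing and regularity conditions are met at each of the integer parameter triples used, and second, the base evaluation $H(0,0;0)=2G$ itself, which is where Catalan's constant actually enters through the classical identity $\sum_{n\geq0}\tfrac{4^n}{(2n+1)^2\binom{2n}{n}}=2G$ underlying Corollary~\ref{c1}. I expect this base evaluation to be the substantive step, the remainder being routine linear algebra in the shift relations.
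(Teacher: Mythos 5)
Your proposal is correct and follows essentially the same route as the paper: the paper identifies the continued fraction with $1/H(4,0;0)$ and applies the first shift relation of Theorem~\ref{t2.2} twice (via Corollary~\ref{c5}) down to the base case $H(0,0;0)=2G$ of Corollary~\ref{c1}, which is exactly your chain of linear equations. The only cosmetic difference is that you label the parameters $(\alpha,\beta,\gamma)=(0,4,0)$ instead of $(4,0,0)$ and therefore invoke the (evident) symmetry $H(\alpha,\beta;\gamma)=H(\beta,\alpha;\gamma)$ to read the relation in $\beta$, which changes nothing of substance.
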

\begin{cor}\label{c11}
    $$\frac{16}{6G-1}=\textup{CF}[3n^2+11n+5,-2n^2(n+1)(n+4)]$$
\end{cor}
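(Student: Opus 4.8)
The plan is to realize Corollary~\ref{c11} as the case $(\alpha,\beta,\gamma)=(0,4,1)$ of Theorem~\ref{t2.1} and then evaluate the resulting series through the recurrences of Theorem~\ref{t2.2}. First I would match the polynomial data. Expanding $a_n=\Delta\bigl(n(n+\alpha)(n+\beta)\bigr)=3n^2+\bigl(3+2(\alpha+\beta)\bigr)n+\bigl(1+(\alpha+\beta)+\alpha\beta\bigr)$ and comparing with $3n^2+11n+5$ forces $\alpha+\beta=4$ and $\alpha\beta=0$, i.e. $\{\alpha,\beta\}=\{0,4\}$. With these values $-2n(n+\alpha)(n+\beta)(n+\gamma)=-2n^2(n+4)(n+\gamma)$, and matching $-2n^2(n+1)(n+4)$ forces $\gamma=1$. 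Since $\Re(2\gamma-\alpha-\beta)=-2<1$ and $(\tfrac{\alpha+1}{2},\tfrac{\beta+1}{2},\gamma+1)=(\tfrac12,\tfrac52,2)\notin\mathbb{Z}_{\le0}$, Theorem~\ref{t2.1} applies, so the continued fraction converges and, writing $H=1/\textup{CF}$, we get $H(0,4;1)=\tfrac14\sum_{n=0}^\infty\frac{\bigl(\frac12\bigr)_n(2)_n}{\bigl(\frac12\bigr)_{n+1}\bigl(\frac52\bigr)_{n+1}}$. It therefore suffices to prove $H(0,4;1)=\tfrac{6G-1}{16}$.

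Next I would climb to this value from a known base case via Theorem~\ref{t2.2}, using the symmetry $H(\alpha,\beta;\gamma)=H(\beta,\alpha;\gamma)$ (immediate from the symmetric series in Theorem~\ref{t2.1}), so that $H(0,4;1)=H(4,0;1)$. The base case is Corollary~\ref{c1}, whose data $3n^2+3n+1$, $-2n^4$ are exactly $(\alpha,\beta,\gamma)=(0,0,0)$, giving $H(0,0;0)=2G$. Every triple in the chain $(0,0;0)$, $(0,0;1)$, $(2,0;1)$, $(4,0;1)$ satisfies $\tfrac{\alpha+1}{2},\tfrac{\beta+1}{2},\gamma+1,\tfrac{\alpha+\beta+1}{2}-\gamma\notin\mathbb{Z}_{\le0}$, so both recurrences of Theorem~\ref{t2.2} are valid throughout. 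The second recurrence at $(0,0;1)$ reads $2H(0,0;1)-H(0,0;0)+1=0$, giving $H(0,0;1)=G-\tfrac12$; the first recurrence at $(2,0;1)$ reads $-2H(2,0;1)+H(0,0;1)+1=0$, giving $H(2,0;1)=\tfrac{2G+1}{4}$; and the first recurrence at $(4,0;1)$ reads $4H(4,0;1)-3H(2,0;1)+1=0$, giving $H(4,0;1)=\tfrac{6G-1}{16}$. Inverting yields $\textup{CF}=\tfrac{16}{6G-1}$, as claimed.

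The arithmetic is routine; the points that need real care are the side conditions for each application of Theorem~\ref{t2.2}. At every step the coefficient of the unknown $H$ must be nonzero — here it is $2$, $-2$ and $4$ respectively, so each equation can be solved — and the regularity hypotheses must hold at both triples a recurrence links, since those recurrences are established first in a restricted range and only extended to our half-integer parameters through the regularity proved in Lemma~\ref{l2.3} and Theorem~\ref{t2.2}; I would check these explicitly along the chain. By contrast, evaluating the series of Theorem~\ref{t2.1} head-on — after simplification it equals $\tfrac45\,{}_3F_2\bigl(\tfrac12,1,2;\tfrac32,\tfrac72;1\bigr)$ — would demand a nontrivial closed-form reduction to $G$, which I expect to be the genuine obstacle; the recurrence route is preferable precisely because it avoids that evaluation and recycles the already-known value $H(0,0;0)=2G$.
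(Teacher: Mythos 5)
Your proof is correct and is essentially the paper's own method: both start from $H(0,0;0)=2G$ (Corollary~\ref{c1}) and reach $H(4,0;1)=\frac{6G-1}{16}$ by three applications of the recurrences in Theorem~\ref{t2.2}. The only difference is the path through the parameter lattice: the paper increments $\alpha$ twice at $\gamma=0$ (Corollaries~\ref{c5} and~\ref{c10}) and then applies the $\gamma$-recurrence, whereas you apply the $\gamma$-recurrence first and then increment $\alpha$, passing through $H(0,0;1)$ and $H(2,0;1)$, which are exactly the paper's Corollaries~\ref{c2} and~\ref{c6}.
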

\begin{cor}\label{c12}
    $$\frac{64}{18G+13}=\textup{CF}[3n^2+11n+5,-2n^2(n+2)(n+4)]$$
\end{cor}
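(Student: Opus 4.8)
The plan is to recognize $\textup{CF}[3n^2+11n+5,-2n^2(n+2)(n+4)]$ as an instance of the template in Theorem~\ref{t2.1} and then invoke the $\gamma$-shifting relation of Theorem~\ref{t2.2} to reduce it to an already-established corollary. First I would read off the parameters. Writing $-2n^2(n+2)(n+4)=-2n\cdot n(n+2)(n+4)$ forces $\{\alpha,\beta,\gamma\}=\{0,2,4\}$ as a multiset, while matching the general expansion $\Delta\bigl(n(n+\alpha)(n+\beta)\bigr)=3n^2+\bigl(3+2(\alpha+\beta)\bigr)n+\bigl(1+(\alpha+\beta)+\alpha\beta\bigr)$ against $3n^2+11n+5$ gives $\alpha+\beta=4$ and $\alpha\beta=0$. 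Hence $\{\alpha,\beta\}=\{0,4\}$ and $\gamma=2$, so in the notation $H(\alpha,\beta;\gamma)=1/\textup{CF}[\cdots]$ of Lemma~\ref{l2.3} the claim is equivalent to $H(0,4;2)=\tfrac{18G+13}{64}$.

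Next I would check that the triple $(\alpha,\beta,\gamma)=(0,4,2)$ and its neighbour $(0,4,1)$ lie in the regularity region of Theorem~\ref{t2.2}: here $\tfrac{\alpha+1}{2}=\tfrac12$, $\tfrac{\beta+1}{2}=\tfrac52$, $\gamma+1\in\{2,3\}$ and $\tfrac{\alpha+\beta+1}{2}-\gamma\in\{\tfrac12,\tfrac32\}$, none of which lie in $\mathbb{Z}_{\le 0}$, so $H$ is regular at both points and the contiguous relations are legitimately applicable. The key step is then the second (the $\gamma$-shifting) relation of Theorem~\ref{t2.2}, namely $2\gamma(2\gamma-\alpha-\beta-1)H(\alpha,\beta;\gamma)-(2\gamma-\alpha-1)(2\gamma-\beta-1)H(\alpha,\beta;\gamma-1)+1=0$, evaluated at $(0,4,2)$, where the two coefficients become $-4$ and $-3$; this yields $4H(0,4;2)=3H(0,4;1)+1$. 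Since Corollary~\ref{c11} asserts $\textup{CF}[3n^2+11n+5,-2n^2(n+1)(n+4)]=\tfrac{16}{6G-1}$, that is $H(0,4;1)=\tfrac{6G-1}{16}$, substituting gives $4H(0,4;2)=\tfrac{3(6G-1)}{16}+1=\tfrac{18G+13}{16}$, hence $H(0,4;2)=\tfrac{18G+13}{64}$ and $\textup{CF}=\tfrac{64}{18G+13}$, as claimed.

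If a self-contained route independent of the intermediate corollaries is wanted, I would instead walk from the base point $(0,0,0)$, where Corollary~\ref{c1} supplies $H(0,0;0)=2G$, up to $(0,4,2)$: two applications of the first relation (used symmetrically in $\alpha,\beta$, which is valid since both forms of $H$ are manifestly symmetric) step $\beta$ from $0$ to $2$ to $4$ at $\gamma=0$, producing $H(0,4;0)=\tfrac{18G-11}{24}$ (this is Corollary~\ref{c10}), and then two applications of the second relation step $\gamma$ from $0$ to $1$ to $2$, passing through $H(0,4;1)=\tfrac{6G-1}{16}$ (Corollary~\ref{c11}). Either way the argument is a short chain of two-term recurrences. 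I expect no serious analytic obstacle, since Theorems~\ref{t2.1} and~\ref{t2.2} already carry the weight; the only points demanding care are confirming that every intermediate triple stays inside the regularity region, and keeping the arithmetic of the recurrence coefficients exact. The genuinely hard part, evaluating the underlying series $\tfrac14\sum_{n\ge0}\tfrac{(1/2)_n(\gamma+1)_n}{((\alpha+1)/2)_{n+1}((\beta+1)/2)_{n+1}}$ in closed form in terms of $G$, is entirely confined to the base case $H(0,0;0)=2G$, which is already available from Corollary~\ref{c1}.
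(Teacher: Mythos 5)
Your proposal is correct and follows essentially the same route as the paper: identify $(\alpha,\beta,\gamma)=(4,0,2)$ (equivalently $(0,4,2)$ by symmetry), apply the second ($\gamma$-shifting) relation of Theorem~\ref{t2.2}, and substitute $H(4,0;1)=\tfrac{6G-1}{16}$ from Corollary~\ref{c11} to get $H(4,0;2)=\tfrac{3H(4,0;1)+1}{4}=\tfrac{18G+13}{64}$. Incidentally, your computation also confirms that the final displayed value in the paper's own proof, $\tfrac{64}{18G-13}$, is a typographical slip for $\tfrac{64}{18G+13}$, which is what the corollary states.
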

\begin{cor}\label{c13}
    $$\frac{4}{6G-5}=\textup{CF}[3n^2+11n+9,-2n^2(n+2)^2]$$
\end{cor}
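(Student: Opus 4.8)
The plan is to recognize Corollary~\ref{c13} as the instance $\alpha=\beta=2,\ \gamma=0$ of Theorem~\ref{t2.1}, and then to collapse the resulting hypergeometric constant down to Catalan's constant by two applications of the lowering relation in Theorem~\ref{t2.2}.

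First I would fix the parameters. Writing $a(n)=\Delta\bigl(n(n+\alpha)(n+\beta)\bigr)=3n^2+(3+2(\alpha+\beta))n+(1+\alpha+\beta+\alpha\beta)$ and matching $3n^2+11n+9$ forces $\alpha+\beta=4$ and $\alpha\beta=4$, hence $\alpha=\beta=2$; then $b(n)=-2n(n+2)^2(n+\gamma)$ must equal $-2n^2(n+2)^2$, forcing $\gamma=0$. Since $\Re(2\gamma-\alpha-\beta)=-4<1$ and $\tfrac{\alpha+1}{2}=\tfrac{\beta+1}{2}=\tfrac32$, $\gamma+1=1$ are not nonpositive integers, Theorem~\ref{t2.1} applies directly, so with $H$ as in Lemma~\ref{l2.3} the claim reduces to proving $H(2,2;0)=\tfrac{6G-5}{4}$, because $\textup{CF}=1/H(2,2;0)$.

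Next I would run the first (the $\alpha$-lowering) identity of Theorem~\ref{t2.2}. At $(\alpha,\beta,\gamma)=(2,2,0)$ it reads $2H(2,2;0)-3H(0,2;0)+1=0$, and at $(\alpha,\beta,\gamma)=(2,0,0)$ it reads $2H(2,0;0)-H(0,0;0)+1=0$; the defining series for $H$ is symmetric in $\alpha$ and $\beta$, so $H(0,2;0)=H(2,0;0)$. I would check at each step that the excluded values $\tfrac{\alpha+1}{2},\tfrac{\beta+1}{2},\gamma+1,\tfrac{\alpha+\beta+1}{2}-\gamma$ are avoided (across the three constants they all lie in $\{\tfrac12,1,\tfrac32,\tfrac52\}$, none a nonpositive integer), so both relations and the symmetry are legitimate.

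The base case is $H(0,0;0)$, which is exactly the reciprocal of $\textup{CF}[3n^2+3n+1,-2n^4]$ from Corollary~\ref{c1}, giving $H(0,0;0)=2G$; equivalently one evaluates $4H(0,0;0)=\sum_{n\ge0}\tfrac{4^{n+1}}{\binom{2n}{n}(2n+1)^2}$ via the classical central-binomial series $\sum_{n\ge0}\tfrac{4^n}{\binom{2n}{n}(2n+1)^2}=2G$. Back-substituting gives $H(2,0;0)=\tfrac{2G-1}{2}$, then $H(2,2;0)=\tfrac{3H(0,2;0)-1}{2}=\tfrac{6G-5}{4}$, whence $\textup{CF}=\tfrac{4}{6G-5}$. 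The one genuinely arithmetic input, and the \emph{main obstacle}, is this base evaluation $H(0,0;0)=2G$: it is the only place Catalan's constant enters and it rests on a nontrivial closed form for a $\tfrac{4^n}{\binom{2n}{n}}$-weighted series (or, equivalently, on Corollary~\ref{c1}); once it is granted, the passage to Corollary~\ref{c13} is pure bookkeeping with the two recurrences and the $\alpha\leftrightarrow\beta$ symmetry.
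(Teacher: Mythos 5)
Your proposal is correct and follows essentially the same route as the paper: the paper proves Corollary~\ref{c13} by applying the first ($\alpha$-lowering) relation of Theorem~\ref{t2.2} to get $H(2,2;0)=\tfrac{3H(2,0;0)-1}{2}$, where $H(2,0;0)=\tfrac{2G-1}{2}$ comes from Corollary~\ref{c5}, itself obtained from Corollary~\ref{c1}'s base evaluation $H(0,0;0)=2G$ via the same relation. Your inline chain $H(0,0;0)\to H(2,0;0)\to H(2,2;0)$, including the $\alpha\leftrightarrow\beta$ symmetry and the central-binomial series for $2G$, is exactly this argument, just with the hypothesis checks made explicit.
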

\begin{cor}\label{c14}
    $$\frac{8}{3-2G}=\textup{CF}[3n^2+11n+9,-2n(n+1)(n+2)^2]$$
\end{cor}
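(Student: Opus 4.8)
The plan is to identify $\textup{CF}\bigl[3n^2+11n+9,-2n(n+1)(n+2)^2\bigr]$ as the special value supplied by Theorem~\ref{t2.1} for a specific triple $(\alpha,\beta,\gamma)$, and then to evaluate that value in closed form by using the two recurrences of Theorem~\ref{t2.2} to descend to a base point whose value is a known multiple of Catalan's constant $G$. First I would fix the parameters by matching. From the inner polynomial $-2n(n+1)(n+2)^2=-2n(n+\alpha)(n+\beta)(n+\gamma)$ I read off that $\{\alpha,\beta,\gamma\}=\{1,2,2\}$ as a multiset. To decide which is which I expand $a(n)=\Delta\bigl(n(n+\alpha)(n+\beta)\bigr)=3n^2+\bigl(3+2(\alpha+\beta)\bigr)n+\bigl(1+\alpha+\beta+\alpha\beta\bigr)$ and match it against $3n^2+11n+9$; this forces $\alpha+\beta=4$ and $\alpha\beta=4$, hence $\alpha=\beta=2$ and therefore $\gamma=1$. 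A quick check shows $\Re(2\gamma-\alpha-\beta)=-2<1$ and that $\tfrac{\alpha+1}{2}=\tfrac{\beta+1}{2}=\tfrac32$, $\gamma+1=2$, $\tfrac{\alpha+\beta+1}{2}-\gamma=\tfrac32$ all avoid $\mathbb{Z}_{\le0}$, so Theorem~\ref{t2.1} applies and gives $\textup{CF}=1/H(2,2;1)$, while Theorem~\ref{t2.2} is also available at this triple.

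Next I would connect $(2,2,1)$ to the base point $(0,0,0)$ through the shift lattice generated by the Theorem~\ref{t2.2} recurrences: the first recurrence (and, by the symmetry of the formula in $\alpha$ and $\beta$, its $\beta$-twin) shifts $\alpha$ or $\beta$ by $2$, and the second shifts $\gamma$ by $1$. Concretely I would follow the chain
$$(0,0,0)\;\xrightarrow{\ \alpha\ }\;(2,0,0)\;\xrightarrow{\ \beta\ }\;(2,2,0)\;\xrightarrow{\ \gamma\ }\;(2,2,1),$$
each arrow being a single substitution into the explicit recurrence. Carrying out the elementary algebra yields $H(2,0;0)=G-\tfrac12$, then $H(2,2;0)=\tfrac{6G-5}{4}$, and finally $H(2,2;1)=\tfrac{1-H(2,2;0)}{6}=\tfrac{3-2G}{8}$, so that $\textup{CF}=1/H(2,2;1)=\tfrac{8}{3-2G}$, as claimed. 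Pleasantly, the two intermediate nodes $(2,0,0)$ and $(2,2,0)$ are exactly Corollaries~\ref{c5} and~\ref{c13}, so once those are in hand only the last $\gamma$-step is needed.

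The one genuinely analytic input is the base value $H(0,0;0)=2G$, i.e.\ Corollary~\ref{c1}. For this I would specialize the series of Theorem~\ref{t2.1} at $(0,0,0)$, where the general term collapses (using $(1)_n=n!$ and $\bigl(\tfrac12\bigr)_n=(2n)!/(4^n n!)$) to $\tfrac{4\cdot4^n}{(2n+1)^2\binom{2n}{n}}$, and then invoke the classical evaluation $\sum_{n=0}^\infty \tfrac{4^n}{(2n+1)^2\binom{2n}{n}}=2G$; this gives $H(0,0;0)=2G$. Recognizing Catalan's constant in this hypergeometric series is the main obstacle—everything downstream is mechanical recurrence bookkeeping, and the same base value simultaneously produces all the Catalan corollaries sitting on integer shifts of $(0,0,0)$.
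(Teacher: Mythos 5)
Your proof is correct and follows essentially the same route as the paper: the paper also identifies the continued fraction as $1/H(2,2;1)$ and descends via the Theorem~\ref{t2.2} recurrences through exactly the chain $H(0,0;0)=2G$ (Corollary~\ref{c1}, from the series of Theorem~\ref{t2.1}), $H(2,0;0)$ (Corollary~\ref{c5}), $H(2,2;0)$ (Corollary~\ref{c13}), and finally $H(2,2;1)=\frac{1-H(2,2;0)}{6}=\frac{3-2G}{8}$. The only cosmetic difference is that you make the parameter matching and the $\alpha\leftrightarrow\beta$ symmetry step explicit, which the paper leaves implicit.
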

\begin{cor}\label{c15}
    $$\frac{32}{2G+5}=\textup{CF}[3n^2+11n+9,-2n(n+2)^3]$$
\end{cor}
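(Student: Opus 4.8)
The plan is to recognize Corollary~\ref{c15} as the instance $\alpha=\beta=\gamma=2$ of Theorem~\ref{t2.1}, and then to compute its value by descending to the base case $\alpha=\beta=\gamma=0$ (Corollary~\ref{c1}) through the two recurrences of Theorem~\ref{t2.2}. First I would identify the parameters. Expanding $\Delta\bigl(n(n+\alpha)(n+\beta)\bigr)=3n^2+\bigl(3+2(\alpha+\beta)\bigr)n+\bigl(1+(\alpha+\beta)+\alpha\beta\bigr)$ and matching with $3n^2+11n+9$ forces $\alpha+\beta=4$ and $\alpha\beta=4$, whence $\alpha=\beta=2$. Then $-2n(n+\alpha)(n+\beta)(n+\gamma)=-2n(n+2)^2(n+\gamma)$ must equal $-2n(n+2)^3$, so $\gamma=2$. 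Since $\Re(2\gamma-\alpha-\beta)=0<1$ and $\tfrac{\alpha+1}{2}=\tfrac{\beta+1}{2}=\tfrac32$ and $\gamma+1=3$ all lie outside $\mathbb{Z}_{\le0}$, Theorem~\ref{t2.1} applies and gives $\textup{CF}[3n^2+11n+9,-2n(n+2)^3]=1/H(2,2;2)$. It therefore suffices to show $H(2,2;2)=\tfrac{2G+5}{32}$.

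Next I would run the finite descent. All the triples occurring below, namely $(2,2;2)$, $(2,2;1)$, $(2,2;0)$, $(0,2;0)$, $(2,0;0)$, $(0,0;0)$, satisfy the hypotheses of Lemma~\ref{l2.3} and Theorem~\ref{t2.2}, so $H$ is defined and the recurrences are available at each step; moreover the coefficient multiplying the ``lower'' value is nonzero at each step, so every relation can be solved for it. Applying the second (the $\gamma$-) recurrence of Theorem~\ref{t2.2} at $(2,2;2)$ and then at $(2,2;1)$ yields $H(2,2;1)=1-4H(2,2;2)$ and $H(2,2;0)=1-6H(2,2;1)$. Applying the first (the $\alpha$-) recurrence at $(2,2;0)$ gives $2H(2,2;0)-3H(0,2;0)+1=0$; then using the symmetry $H(0,2;0)=H(2,0;0)$, which is immediate since $a_n$ and $b_n$ are symmetric in $\alpha,\beta$, together with the first recurrence at $(2,0;0)$, gives $2H(2,0;0)-H(0,0;0)+1=0$.

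Finally I would feed in $H(0,0;0)=2G$, which is Corollary~\ref{c1}, and back-substitute along the chain: $H(2,0;0)=G-\tfrac12$, then $H(2,2;0)=\tfrac{6G-5}{4}$, then $H(2,2;1)=\tfrac{3-2G}{8}$, and at last $H(2,2;2)=\tfrac{2G+5}{32}$, so that the continued fraction equals $\tfrac{32}{2G+5}$, as claimed.

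There is no deep obstacle, since the statement is a specialization of the general machinery; the real work is the bookkeeping of the descent and the verification that each application of Theorem~\ref{t2.2} is legitimate. The one point deserving care is that the defining series of Theorem~\ref{t2.1} need not converge at every intermediate triple (for instance $\Re(2\gamma-\alpha-\beta)\ge1$ can occur along the way), which is precisely why one argues with the analytically continued function $H$ of Lemma~\ref{l2.3} rather than with the series itself; the recurrences of Theorem~\ref{t2.2}, established for $H$ as a regular function of its parameters, then propagate the value from $(0,0;0)$ to $(2,2;2)$ without any convergence caveat.
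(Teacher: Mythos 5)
Your proof is correct and takes essentially the same approach as the paper: the paper's own proof applies the second recurrence of Theorem~\ref{t2.2} at $(2,2;2)$ to reduce to Corollary~\ref{c14}, which rests in turn on Corollaries~\ref{c13}, \ref{c5} and \ref{c1} --- precisely the descent $(2,2;2)\to(2,2;1)\to(2,2;0)\to(2,0;0)\to(0,0;0)$ that you carry out inline. The only difference is presentational: you unroll the chain of intermediate corollaries and make explicit the symmetry $H(0,2;0)=H(2,0;0)$ that the paper uses tacitly.
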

\begin{cor}\label{c16}
    $$\frac{192}{18G+13}=\textup{CF}[3n^2+11n+9,-2n(n+2)^2(n+3)]$$
\end{cor}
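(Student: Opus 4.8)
The plan is to recognize $\textup{CF}[3n^2+11n+9,-2n(n+2)^2(n+3)]$ as a specialization of the template in Theorem~\ref{t2.1} and read off the parameters. Expanding $a(n)=\Delta\bigl(n(n+\alpha)(n+\beta)\bigr)=3n^2+\bigl(3+2(\alpha+\beta)\bigr)n+\bigl(1+\alpha+\beta+\alpha\beta\bigr)$ and matching with $3n^2+11n+9$ forces $\alpha+\beta=4$ and $\alpha\beta=4$, hence $\alpha=\beta=2$; matching the partial numerator $-2n(n+2)^2(n+\gamma)$ with $-2n(n+2)^2(n+3)$ then gives $\gamma=3$. So the target value is $1/H(2,2;3)$ in the notation of Lemma~\ref{l2.3}, and it suffices to evaluate $H(2,2;3)$.

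First I would decide which theorem applies. Here $\Re(2\gamma-\alpha-\beta)=2\geq1$, so the hypergeometric series in Theorem~\ref{t2.1} diverges and that theorem cannot be quoted directly at $(2,2;3)$. Instead I would use the functional equations of Theorem~\ref{t2.2}, which are legitimate at these parameters because $\tfrac{\alpha+1}{2}=\tfrac{\beta+1}{2}=\tfrac32$, $\gamma+1=4$ and $\tfrac{\alpha+\beta+1}{2}-\gamma=-\tfrac12$ all avoid $\mathbb{Z}_{\le0}$, so $H$ is regular there by Lemma~\ref{l2.3}.

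Next I would descend from $(2,2;3)$ to a base case whose value is known in closed form, the cleanest target being $H(0,0;0)=2G$. At $(0,0;0)$ one has $\Re(2\gamma-\alpha-\beta)=0<1$, so Theorem~\ref{t2.1} applies and the convergent series $\tfrac14\sum_{n\ge0}\tfrac{(1/2)_n(1)_n}{(1/2)_{n+1}^2}$ reduces to the standard central-binomial series $\sum_{n\ge0}\tfrac{4^n}{(2n+1)^2\binom{2n}{n}}=2G$ for Catalan's constant (equivalently, this is Corollary~\ref{c1}). To connect the two, I would iterate the second (the $\gamma$-lowering) functional equation of Theorem~\ref{t2.2} to step $\gamma$ from $3$ down to $0$ with $\alpha=\beta=2$ fixed, then apply the first functional equation and its $\alpha\leftrightarrow\beta$ mirror to lower $\alpha$ and then $\beta$ from $2$ to $0$ at $\gamma=0$. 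On this path every recurrence coefficient encountered is nonzero and every regularity hypothesis of Lemma~\ref{l2.3} holds at each intermediate triple $(2,2;2),(2,2;1),(2,2;0),(0,2;0)$, so each step is a genuine linear relation; solving the chain expresses $H(2,2;3)$ as an explicit rational affine function of $H(0,0;0)=2G$, and the arithmetic yields $H(2,2;3)=\tfrac{18G+13}{192}$, whence $\textup{CF}=1/H(2,2;3)=\tfrac{192}{18G+13}$. (Alternatively, since $(2,2;2)$ already satisfies $\Re(2\gamma-\alpha-\beta)=0<1$, one may evaluate $H(2,2;2)$ directly by Theorem~\ref{t2.1} and reach $H(2,2;3)$ in a single application of the $\gamma$-recurrence.)

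The genuinely nontrivial input is the base evaluation $H(0,0;0)=2G$, i.e. the closed form of one convergent hypergeometric series in terms of Catalan's constant; everything after that is mechanical descent. The only real care needed is bookkeeping: choosing the descent path so that no recurrence coefficient vanishes, and checking at each of the few intermediate triples that the exclusion conditions of Lemma~\ref{l2.3} hold so that the functional equations of Theorem~\ref{t2.2} may be invoked there. Since $(2,2;3)$ is the only triple on the path at which the defining series diverges, it is also the only place where the analytic-continuation content of Lemma~\ref{l2.3} and Theorem~\ref{t2.2}—rather than Theorem~\ref{t2.1} itself—is actually used.
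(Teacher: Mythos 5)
Your proposal is correct and takes essentially the same route as the paper: the paper obtains the value by chaining the functional equations of Theorem~\ref{t2.2} from the base evaluation $H(0,0;0)=2G$ (Corollary~\ref{c1}, via Theorem~\ref{t2.1}) through $H(2,0;0)$, $H(2,2;0)$, $H(2,2;1)$, $H(2,2;2)$ up to $H(2,2;3)=\tfrac{18G+13}{192}$, which is exactly your descent path read in reverse (with $(0,2;0)$ and $(2,0;0)$ identified by the $\alpha\leftrightarrow\beta$ symmetry). Your suggested shortcut of evaluating $H(2,2;2)$ directly from Theorem~\ref{t2.1} is a minor variation the paper does not use, but it stays entirely within the same framework.
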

\begin{cor}\label{c17}
    $$\frac{6}{17-18G}=\textup{CF}[3n^2+13n+13,-2n(n+1)^2(n+3)]$$
\end{cor}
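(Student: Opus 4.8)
The plan is to realize Corollary~\ref{c17} as the first tail of a continued fraction covered by Theorem~\ref{t2.1} and then take a limit in $\gamma$. A direct application is impossible: writing $b(n)=-2n(n+1)^2(n+3)=-2n(n+\alpha)(n+\beta)(n+\gamma)$ forces $\{\alpha,\beta,\gamma\}=\{1,1,3\}$, but neither admissible choice of the pair $(\alpha,\beta)$ gives $\Delta\bigl(n(n+\alpha)(n+\beta)\bigr)=3n^2+13n+13$ (the matching would require $\alpha+\beta=5,\ \alpha\beta=7$, incompatible with the roots of $b$). So an auxiliary construction is needed.

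I would start from the $(\alpha,\beta)=(0,2)$ instance of Theorem~\ref{t2.1},
$$\textup{CF}\left[3n^2+7n+3,\,-2n^2(n+2)(n+\gamma)\right]=\frac{1}{H(0,2;\gamma)},$$
valid for $\Re\gamma<\tfrac32$ and $\gamma+1\notin\mathbb{Z}_{\le0}$. Shifting the index by one sends these coefficients to $3n^2+13n+13$ and $-2(n+1)^2(n+3)(n+1+\gamma)$, which at $\gamma=-1$ are exactly the coefficients of Corollary~\ref{c17}. Peeling off the first term via $\textup{CF}[a_n,b_n]=a_0+b_1/\textup{CF}[a_{n+1},b_{n+1}]$, with $a_0=3$ and $b_1=-6(1+\gamma)$, gives
$$\textup{CF}\left[3n^2+13n+13,\,-2(n+1)^2(n+3)(n+1+\gamma)\right]=\frac{-6(1+\gamma)\,H(0,2;\gamma)}{1-3\,H(0,2;\gamma)}.$$

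Now I would let $\gamma\to-1$. Since $(\gamma+1)_n$ vanishes at $\gamma=-1$ for $n\ge1$, the series for $H$ collapses to its $n=0$ term, so $H(0,2;-1)=\tfrac13$ and both numerator and denominator above vanish. Writing $\epsilon=\gamma+1$ and $H(0,2;\gamma)=\tfrac13+\epsilon\,H'(0,2;-1)+O(\epsilon^2)$, the quotient tends to $\tfrac{2}{3H'(0,2;-1)}$, whence the continued fraction of Corollary~\ref{c17} equals $\tfrac{2}{3H'(0,2;-1)}$. To evaluate the derivative without summing a new series, differentiate the second contiguous relation of Theorem~\ref{t2.2} with $(\alpha,\beta)=(0,2)$, namely $2\gamma(2\gamma-3)H(0,2;\gamma)-(2\gamma-1)(2\gamma-3)H(0,2;\gamma-1)+1=0$, in $\gamma$ and set $\gamma=0$; the coefficient $2\gamma(2\gamma-3)$ of the unwanted $H'(0,2;0)$ then drops out, leaving $3H'(0,2;-1)=-6H(0,2;0)+8H(0,2;-1)$. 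Inserting $H(0,2;-1)=\tfrac13$ and $H(0,2;0)=G-\tfrac12$ (the reciprocal of the value in Corollary~\ref{c5}, the $(0,2,0)$ case of Theorem~\ref{t2.1}) yields $H'(0,2;-1)=\tfrac{17-18G}{9}$, and hence the continued fraction equals $\tfrac{2}{3}\cdot\tfrac{9}{17-18G}=\tfrac{6}{17-18G}$.

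The main obstacle is the passage to the limit: one must know that the value of the tail continued fraction varies analytically with $\gamma$ as $\gamma\to-1$, i.e.\ that $\lim_{\gamma\to-1}$ commutes with the limit $\lim_n A_n/B_n$ defining it. This is the regularity phenomenon of Lemma~\ref{l2.3}, but $\gamma=-1$ sits on the boundary of that lemma's hypotheses (there $\gamma+1=0\in\mathbb{Z}_{\le0}$), so I would argue separately that the series defining $H(0,2;\gamma)$ is absolutely and uniformly convergent — its general term is $O(n^{-1-\epsilon})$ near $\gamma=-1$ — hence analytic throughout a full neighborhood of $\gamma=-1$; the displayed identity between analytic functions of $\gamma$ then extends to $\gamma=-1$ and the $0/0$ evaluation is legitimate.
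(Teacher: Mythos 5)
Your proposal is correct and, at its core, it is the same proof as the paper's: realize the continued fraction as the index\nobreakdash-shifted tail of the $(\alpha,\beta)=(2,0)$ family of Theorem~\ref{t2.1} (your $H(0,2;\cdot)$ is the paper's $H(2,0;\cdot)$), peel off the first partial quotient, bring in the second contiguous relation of Theorem~\ref{t2.2}, and feed in $H(2,0;0)=G-\tfrac12$ from Corollary~\ref{c5}. The difference is in how the $0/0$ limit at the degenerate parameter is resolved. The paper (whose $\gamma$ is your $\gamma+1$) uses the contiguous relation to eliminate $H(2,0;\gamma-1)$ \emph{before} taking the limit, arriving at
$$\textup{CF}[3n^2+13n+13,-2(n+1)^2(n+3)(n+\gamma)]=\frac{6\gamma(2\gamma-3)H(2,0;\gamma)+3}{3(2\gamma-3)H(2,0;\gamma)-2\gamma+4},$$
which is manifestly finite at $\gamma=0$, so only regularity of $H(2,0;\cdot)$ at the \emph{good} point $\gamma=0$ (covered by Lemma~\ref{l2.3}) is ever used. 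You instead Taylor-expand at the \emph{bad} point, obtain the limit $\tfrac{2}{3H'(0,2;-1)}$, and evaluate the derivative by differentiating the contiguous relation at $\gamma=0$, where the coefficient of the unwanted $H'(0,2;0)$ vanishes --- a nice trick, and your arithmetic checks out: $H(0,2;-1)=\tfrac13$, $3H'(0,2;-1)=-6H(0,2;0)+8H(0,2;-1)$, hence $H'(0,2;-1)=\tfrac{17-18G}{9}$ and the value $\tfrac{6}{17-18G}$. The price of your route is that you need $H(0,2;\cdot)$ to extend analytically across $\gamma=-1$, which Lemma~\ref{l2.3} does not give; your locally uniform convergence argument for the series (terms are in fact $O(n^{\gamma-5/2})$) legitimately supplies that extension, so this is an extra step, correctly handled, that the paper's elimination trick simply avoids.

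One caveat about your closing paragraph. The obstacle you rightly single out --- continuity in $\gamma$, at $\gamma=-1$, of the value of the \emph{tail} continued fraction, so that the limit of your identity really evaluates the corollary's CF --- is not what your series argument establishes. Analyticity of $H(0,2;\gamma)$ near $\gamma=-1$ concerns the \emph{full} continued fraction $\textup{CF}[3n^2+7n+3,-2n^2(n+2)(n+\gamma)]$, which at $\gamma=-1$ has $b_1=0$ and therefore decouples from its tail: it carries no information about the quantity you are computing. What is actually required is a Lemma~\ref{l2.3}-style regularity statement for the tail itself, i.e.\ for the shifted convergents $B_{n+1}(\gamma)$ of the recursion. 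To be fair, the paper's own proof makes exactly the same silent interchange (``by putting $\gamma\to0$''), so your write-up is no less rigorous than the published one; but your sentence claiming that the analyticity of $H$ makes the $0/0$ evaluation ``legitimate'' conflates the two continued fractions, and a fully airtight proof would still need the tail-regularity argument (or a route avoiding the limit entirely, e.g.\ a Section~\ref{sec3}-style explicit solution).
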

\begin{cor}\label{c18}
    $$\frac{48}{90G-79}=\textup{CF}[3n^2+15n+15,-2n^2(n+2)(n+4)]$$
\end{cor}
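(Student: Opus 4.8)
The plan is to read $\textup{CF}[3n^2+15n+15,-2n^2(n+2)(n+4)]$ as a member of the family in Theorem~\ref{t2.1} and then evaluate the reciprocal continued-fraction value $H$ through the recurrences of Theorem~\ref{t2.2}. First I would fix the parameters by matching the data against $a(n)=\Delta\bigl(n(n+\alpha)(n+\beta)\bigr)=3n^2+(3+2(\alpha+\beta))n+(1+\alpha+\beta+\alpha\beta)$ and $b(n)=-2n(n+\alpha)(n+\beta)(n+\gamma)$. The equations $3+2(\alpha+\beta)=15$ and $1+\alpha+\beta+\alpha\beta=15$ give $\alpha+\beta=6$ and $\alpha\beta=8$, hence $\{\alpha,\beta\}=\{2,4\}$; comparing $-2n(n+2)(n+4)(n+\gamma)$ with $-2n^2(n+2)(n+4)$ then forces $\gamma=0$. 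So the target reduces to showing $H(2,4;0)=\tfrac{90G-79}{48}$, and I would first check that $\tfrac{\alpha+1}{2}=\tfrac32$, $\tfrac{\beta+1}{2}=\tfrac52$, $\gamma+1=1$, $\tfrac{\alpha+\beta+1}{2}-\gamma=\tfrac72$ all avoid $\mathbb{Z}_{\le0}$, so that $H(2,4;0)$ is well defined and Theorem~\ref{t2.2} may be invoked at this parameter point.

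The quickest route is a single application of the $\alpha$-recurrence in Theorem~\ref{t2.2}: with $(\alpha,\beta,\gamma)=(2,4,0)$ it reads $2\,H(2,4;0)-5\,H(0,4;0)+1=0$, so $H(2,4;0)=\tfrac{5H(0,4;0)-1}{2}$. But $H(0,4;0)$ is precisely the reciprocal of the continued fraction already evaluated in Corollary~\ref{c10} (the parameters $(0,4,0)$ produce $a(n)=3n^2+11n+5$ and $b(n)=-2n^3(n+4)$), giving $H(0,4;0)=\tfrac{18G-11}{24}$; substituting yields $H(2,4;0)=\tfrac{90G-79}{48}$ at once. To make the argument self-contained one can instead descend all the way to the single base value $H(0,0;0)=2G$ of Corollary~\ref{c1}, using the symmetry $H(\alpha,\beta;\gamma)=H(\beta,\alpha;\gamma)$ to run the first identity of Theorem~\ref{t2.2} as a recurrence in the second index as well. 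This produces the chain $H(0,0;0)=2G\to H(0,2;0)=\tfrac{2G-1}{2}\to H(0,4;0)=\tfrac{18G-11}{24}\to H(2,4;0)=\tfrac{90G-79}{48}$, and taking reciprocals gives the asserted $\tfrac{48}{90G-79}$.

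The computation is essentially bookkeeping once Theorems~\ref{t2.1} and~\ref{t2.2} are in hand. The points that need genuine attention are the faithful extraction of $(\alpha,\beta,\gamma)$ from the coefficients and the verification, at each node of the chain, that the top coefficient $\alpha(\alpha-2\gamma-1)$ (or its $\beta$-analogue) is nonzero and that all four exclusion conditions of Lemma~\ref{l2.3} hold, so that $H$ is regular there and the recurrence is legitimately applicable at these integer parameters. The only genuinely transcendental ingredient, namely the closed form $\sum_{n\ge0}\frac{4^n}{\binom{2n}{n}(2n+1)^2}=2G$ underlying $H(0,0;0)=2G$, is supplied by Corollary~\ref{c1}, so no further special-function evaluation is required and I do not expect any serious obstacle beyond this careful verification.
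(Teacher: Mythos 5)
Your proof is correct and takes essentially the same approach as the paper: identify the continued fraction as $1/H$ with $\{\alpha,\beta\}=\{2,4\}$, $\gamma=0$, apply the first contiguous relation of Theorem~\ref{t2.2} once, and substitute a previously established corollary value. The only (inessential) difference is the path through the parameter lattice: you descend via $H(0,4;0)$ from Corollary~\ref{c10}, while the paper writes the value as $H(4,2;0)=\frac{15H(2,2;0)-1}{12}$ and descends via $H(2,2;0)$ from Corollary~\ref{c13}; both chains are anchored at $H(0,0;0)=2G$ and yield $\frac{90G-79}{48}$.
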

\begin{cor}\label{c19}
    $$\frac{32}{19-18G}=\textup{CF}[3n^2+15n+15,-2n(n+1)(n+2)(n+4)]$$
\end{cor}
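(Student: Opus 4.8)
The plan is to recognize Corollary~\ref{c19} as the instance $\alpha=2,\beta=4,\gamma=1$ of Theorem~\ref{t2.1} and then pin down the resulting constant by the recurrences of Theorem~\ref{t2.2}. First I would match the data. Expanding
$$\Delta\bigl(n(n+\alpha)(n+\beta)\bigr)=3n^2+\bigl(3+2(\alpha+\beta)\bigr)n+\bigl(1+\alpha+\beta+\alpha\beta\bigr)$$
and comparing with $3n^2+15n+15$ gives $\alpha+\beta=6$ and $\alpha\beta=8$, so $\{\alpha,\beta\}=\{2,4\}$; then $-2n(n+\alpha)(n+\beta)(n+\gamma)=-2n(n+2)(n+4)(n+\gamma)$ must equal $-2n(n+1)(n+2)(n+4)$, forcing $\gamma=1$. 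Since $\Re(2\gamma-\alpha-\beta)=-4<1$ and $\tfrac{\alpha+1}{2}=\tfrac32,\ \tfrac{\beta+1}{2}=\tfrac52,\ \gamma+1=2\notin\mathbb{Z}_{\le0}$, Theorem~\ref{t2.1} applies directly and the continued fraction equals $1/H(2,4;1)$. It therefore remains to show $H(2,4;1)=\tfrac{19-18G}{32}$.

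Next I would build a chain of the recurrences of Theorem~\ref{t2.2} from the base value $H(0,0;0)$ up to $H(2,4;1)$. Because $H$ is symmetric in its first two arguments, $H(\alpha,\beta;\gamma)=H(\beta,\alpha;\gamma)$, the first identity also holds with $\alpha$ and $\beta$ exchanged, so I can raise either of the first two parameters by $2$, and the second identity raises $\gamma$ by $1$. Concretely I would pass through
$$H(0,0;0)\ \to\ H(2,0;0)\ \to\ H(2,2;0)\ \to\ H(2,4;0)\ \to\ H(2,4;1),$$
each arrow being a single application of one identity (and at each step one checks the excluded values of Theorem~\ref{t2.2} are avoided, which holds since every parameter encountered stays half-integral or a nonnegative integer). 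Each step is a linear relation with explicit rational coefficients, so this is routine: for instance the $\alpha$-step at $(\alpha,\beta,\gamma)=(2,0,0)$ reads $2H(2,0;0)-H(0,0;0)+1=0$, while the final $\gamma$-step at $(2,4,1)$ reads $-10H(2,4;1)-3H(2,4;0)+1=0$.

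The one genuinely nontrivial input, and the main obstacle, is the base evaluation $H(0,0;0)=2G$, which is exactly Corollary~\ref{c1}. Writing out the series of Theorem~\ref{t2.1} at $\alpha=\beta=\gamma=0$ collapses it, via $(\tfrac12)_n=\tfrac{(2n)!}{4^nn!}$, to $\sum_{n=0}^\infty \frac{4^n}{\binom{2n}{n}(2n+1)^2}$, and the task is to identify this classical hypergeometric series with $2G$; concretely I would integrate the expansion $\frac{\arcsin x}{\sqrt{1-x^2}}=\sum_{n=0}^\infty \frac{4^n}{(2n+1)\binom{2n}{n}}x^{2n+1}$ against $\tfrac{dx}{x}$ and use $\int_0^{\pi/2}\tfrac{\theta}{\sin\theta}\,d\theta=2G$. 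This is the only place Catalan's constant actually enters, the rest being formal. Granting it, back-substitution along the chain gives successively $H(2,0;0)=\tfrac{2G-1}{2}$, $H(2,2;0)=\tfrac{6G-5}{4}$, $H(2,4;0)=\tfrac{90G-79}{48}$ (which is Corollary~\ref{c18}), and finally $H(2,4;1)=\tfrac{285-270G}{480}=\tfrac{19-18G}{32}$, whence the continued fraction equals $\tfrac{32}{19-18G}$, as claimed.
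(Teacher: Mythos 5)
Your proposal is correct and takes essentially the same route as the paper: the paper proves Corollary~\ref{c19} by applying the second identity of Theorem~\ref{t2.2} to $H(4,2;0)$ (Corollary~\ref{c18}), which it obtains through the chain $H(0,0;0)\to H(2,0;0)\to H(2,2;0)\to H(4,2;0)$ (Corollaries~\ref{c1}, \ref{c5}, \ref{c13}, \ref{c18}), identical to your chain up to the trivial symmetry $H(\alpha,\beta;\gamma)=H(\beta,\alpha;\gamma)$, and with the same coefficients at every step. The only difference is at the base case: where the paper cites \cite[Eq.3.114]{GS} for $\sum_{n=0}^\infty\frac{(n!)^2}{(2n)!}\frac{4^n}{(2n+1)^2}=2G$, you derive it from the $\arcsin$ series and $\int_0^{\pi/2}\frac{\theta}{\sin\theta}\,d\theta=2G$, a harmless self-contained substitution.
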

\begin{cor}\label{c20}
    $$\frac{128}{17-6G}=\textup{CF}[3n^2+15n+15,-2n(n+2)^2(n+4)]$$
\end{cor}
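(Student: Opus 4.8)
The plan is to recognize Corollary~\ref{c20} as the instance $(\alpha,\beta,\gamma)=(4,2,2)$ of Theorem~\ref{t2.1}, and then to evaluate the associated quantity $H(4,2;2)$ by descending through the recurrences of Theorem~\ref{t2.2} to the base value $H(0,0;0)=2G$ supplied by Corollary~\ref{c1} (the reciprocal of $\textup{CF}[3n^2+3n+1,-2n^4]=\tfrac1{2G}$, which corresponds to $\alpha=\beta=\gamma=0$).

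\textbf{Step 1 (identify the parameters).} Expanding $\Delta\bigl(n(n+\alpha)(n+\beta)\bigr)=3n^2+\bigl(3+2(\alpha+\beta)\bigr)n+\bigl(1+\alpha+\beta+\alpha\beta\bigr)$ and matching it with $a_n=3n^2+15n+15$ gives $\alpha+\beta=6$ and $\alpha\beta=8$, hence $\{\alpha,\beta\}=\{2,4\}$. The factorization $b_n=-2n(n+2)^2(n+4)=-2n(n+\alpha)(n+\beta)(n+\gamma)$ then forces $\gamma=2$, so $(\alpha,\beta,\gamma)=(4,2,2)$. Since $\Re(2\gamma-\alpha-\beta)=-2<1$ and $\tfrac{\alpha+1}{2}=\tfrac52,\ \tfrac{\beta+1}{2}=\tfrac32,\ \gamma+1=3\notin\mathbb{Z}_{\le0}$, Theorem~\ref{t2.1} applies and the claim reduces to showing $H(4,2;2)=\tfrac{17-6G}{128}$.

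\textbf{Step 2 (descend to the base case).} Solving each recurrence in Theorem~\ref{t2.2} for its leading term expresses $H(\alpha,\beta;\gamma)$ in terms of $H(\alpha-2,\beta;\gamma)$ (resp.\ $H(\alpha,\beta;\gamma-1)$) with explicit rational coefficients. Using also the symmetry $H(\alpha,\beta;\gamma)=H(\beta,\alpha;\gamma)$ (immediate from the symmetry of $a_n,b_n$, or of the series in Theorem~\ref{t2.1}), I would follow the chain
$$H(4,2;2)\to H(2,2;2)\to H(0,2;2)=H(2,0;2)\to H(0,0;2)\to H(0,0;1)\to H(0,0;0),$$
applying the first recurrence three times and the second twice. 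With $H(0,0;0)=2G$ this yields successively $H(0,0;1)=\tfrac12(2G)-\tfrac12=G-\tfrac12$, $H(0,0;2)=\tfrac34H(0,0;1)-\tfrac1{12}=\tfrac34G-\tfrac{11}{24}$, $H(0,2;2)=\tfrac12H(0,0;2)+\tfrac16=\tfrac38G-\tfrac1{16}$, $H(2,2;2)=\tfrac16H(0,2;2)+\tfrac16=\tfrac1{16}G+\tfrac5{32}$, and finally $H(4,2;2)=-\tfrac34H(2,2;2)+\tfrac14=\tfrac{17-6G}{128}$. Inverting gives $\textup{CF}[3n^2+15n+15,-2n(n+2)^2(n+4)]=\tfrac{128}{17-6G}$, as claimed.

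\textbf{Main obstacle.} The arithmetic above is routine; the real point of care is to choose the descent so that it never leaves the region where $H$ is regular and never divides by a vanishing leading coefficient. Concretely, at every node of the chain one must have $\tfrac{\alpha+1}{2},\tfrac{\beta+1}{2},\gamma+1,\tfrac{\alpha+\beta+1}{2}-\gamma\notin\mathbb{Z}_{\le0}$ (so that Lemma~\ref{l2.3} and Theorem~\ref{t2.2} are available there), and at every edge the coefficients $\alpha(\alpha-2\gamma-1)$ or $2\gamma(2\gamma-\alpha-\beta-1)$ must be nonzero. This is exactly why I would reduce $\alpha$ and $\beta$ to $0$ before lowering $\gamma$: keeping $\gamma=2$ fixed while $\alpha$ passes through the forbidden value $\alpha=2\gamma+1=5$ would make the first recurrence degenerate, and the stated path ($4\to2\to0$) neatly avoids it. Verifying these finitely many side conditions is the only substantive part of the argument.
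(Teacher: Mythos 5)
Your proposal is correct and takes essentially the same route as the paper: the paper likewise evaluates $H(4,2;2)$ by iterating the two recurrences of Theorem~\ref{t2.2} down to the base value $H(0,0;0)=2G$ of Corollary~\ref{c1}, only along the opposite path through the parameter lattice (raising $\alpha,\beta$ first at $\gamma=0$ via Corollaries~\ref{c5}, \ref{c13}, \ref{c18}, \ref{c19}, then raising $\gamma$), whereas you raise $\gamma$ first and then $\alpha,\beta$, passing through the values of Corollaries~\ref{c2}, \ref{c3}, \ref{c7}, \ref{c15}; both paths satisfy all side conditions. One small remark: your stated reason for choosing your ordering is vacuous, since $\alpha$ descends through the even values $4,2,0$ and so can never hit the odd forbidden value $2\gamma+1=5$ on either path.
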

\begin{cor}\label{c21}
    $$\frac{8}{54G-49}=\textup{CF}[3n^2+15n+19,-2n(n+2)^3]$$
\end{cor}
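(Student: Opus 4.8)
Corollary~\ref{c21} does not follow from Theorem~\ref{t2.1} directly: writing $a(n)=3n^2+15n+19=\Delta\bigl(n(n+\alpha)(n+\beta)\bigr)$ forces $\alpha+\beta=6$ and $\alpha\beta=12$, so $(n+\alpha)(n+\beta)=n^2+6n+12$, which is not a divisor of $b(n)/(-2n)=(n+2)^3$; thus the parametrization behind \eqref{e1.3} is unavailable. The plan is to rerun the differential-equation method of Section~\ref{sec2}, but to assign the distinguished root differently. For the companion Corollary~\ref{c15}, which shares the same $b(n)=-2n(n+2)^3$, the structural identity $a=-\tfrac12\Delta c$ is realized with $s=3$ and $c(n)=b(n)/(n+2)=-2n(n+2)^2$; here instead I would take $s=1$, so that $c(n)=b(n)/n=-2(n+2)^3$ and indeed $a(n)=-\tfrac12\Delta c(n)=\Delta\bigl((n+2)^3\bigr)$. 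With $(s)_n=n!$ the generating function $y=\sum_n \tfrac{y_n}{n!}\tfrac{x^n}{n!}$ then satisfies \eqref{e2.2}, namely $\bigl\{(\vartheta+2)D^2-(3\vartheta^2+21\vartheta+37)D+2(\vartheta+3)^3\bigr\}y=0$.

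Because $a=-\tfrac12\Delta c$ still holds, I would translate $t=x-\tfrac12$ via Lemma~\ref{l2.2} exactly as in the proof of Theorem~\ref{t2.1}; the choice $\delta=\tfrac12$ again kills the $D^3$ and $D$ terms and leaves $\bigl\{\tfrac{\vartheta+8}{4}D^2-(\vartheta+3)^3\bigr\}y=0$. Setting $u=(2t)^2=(2x-1)^2$ and simplifying with Lemma~\ref{l2.1} gives $\vartheta_u(\vartheta_u-\tfrac12)(\vartheta_u+3)\,y=u\,(\vartheta_u+\tfrac32)^3\,y$, the equation of ${}_3F_2\bigl(\tfrac32,\tfrac32,\tfrac32;\tfrac12,4;u\bigr)$. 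This is the crucial difference from Theorem~\ref{t2.1}: since the three roots of $c$ are all $-2$ and none is $0$, the common factor $(\vartheta+1)$ that previously reduced the equation to the second-order hypergeometric form is absent, and one is left with a genuinely third-order generalized hypergeometric equation.

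To conclude I would use that the upper parameter $\tfrac32$ exceeds the lower parameter $\tfrac12$ by $1$, which collapses the relevant solution to $F\bigl(\tfrac32,\tfrac32;4;\cdot\bigr)$ together with its first derivative, and then continue this combination to $u=1$, i.e. to $v=1-u=4x(1-x)=0$, using the formulas of Lemma~\ref{l2.3} (the resonant case $c-a-b=1\in\mathbb{Z}_{\ge0}$, whose $\log$- and $\psi$-terms are exactly what manufacture Catalan's constant). Fixing the two constants from the initial data in \eqref{e1.1}, I would extract $A_n,B_n$ as the coefficient sums in the style of \eqref{e2.6} and evaluate $\lim_{n\to\infty}A_n/B_n$, which I expect to simplify to $\tfrac{8}{54G-49}$. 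The main obstacle is precisely this last stage: handling the third-order solution through the singular point $u=1$, establishing (as in Lemma~\ref{l2.3}) that the dominant solution governs $A_{n+1}'/A_n'$ so the limit exists, and combining the derivative term with the logarithmic and digamma contributions so that all transcendental parts reduce to $G$ alone while the rationals assemble into $54G-49$. One could alternatively try Petkov\v{s}ek's algorithm on the recurrence $y_{n+1}=(3n^2+15n+19)y_n-2n(n+2)^3y_{n-1}$, but since the generating function is a proper combination of a ${}_2F_1$ and its derivative rather than a single hypergeometric term, I would expect the differential-equation route to be the cleaner one.
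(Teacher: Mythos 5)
Your opening diagnosis is correct, and your computations check out as far as they go: $a(n)=3n^2+15n+19$ forces $\alpha+\beta=6$, $\alpha\beta=12$, so Theorem~\ref{t2.1} is indeed unavailable; the choice $s=1$, $c(n)=-2(n+2)^3$ satisfies $a=-\tfrac12\Delta c$; the translation $\delta=\tfrac12$ kills the $D^3$ and $D$ terms and leaves $\bigl\{\tfrac{\vartheta+8}{4}D^2-(\vartheta+3)^3\bigr\}y=0$; and in the variable $u=(2x-1)^2$ this is the equation of ${}_3F_2\bigl(\tfrac32,\tfrac32,\tfrac32;\tfrac12,4;u\bigr)$. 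But everything after ``To conclude'' is a plan, not a proof, and the steps you defer are exactly where the content lies. In the proof of Theorem~\ref{t2.1} the factor $(\vartheta+1)$ is not cosmetic: it is what turns the coefficient problem at $v=0$ into a \emph{first-order} recurrence for $z_n$, solvable in closed form, which is why $A_n$ and $B_n$ come out explicitly as in (\ref{e2.6}). Since that factor is absent here (as you note), the coefficients of the generating functions of $A_n$ and $B_n$ at $v=0$ satisfy a genuine three-term recurrence, and you offer no closed form. Moreover, the ${}_3F_2$ you identify is the solution analytic at $u=0$, whereas $\sum_n A_n x^n/(n!)^2$ and $\sum_n B_n x^n/(n!)^2$ are solutions analytic at $u=1$ (where the exponents are $0,0,1$, a logarithmic case); relating the two is a third-order connection problem. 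The $2F1$ continuation formulas cited in Lemma~\ref{l2.3}, applied to $F\bigl(\tfrac32,\tfrac32;4;\cdot\bigr)$ and its derivative, describe one particular solution near $u=1$, but you would still have to identify which combination equals the $A_n$-generating function, re-establish the dominance argument of Lemma~\ref{l2.3} for a third-order equation, and actually evaluate the limit; ``which I expect to simplify to $\tfrac{8}{54G-49}$'' is an expectation, not an argument.

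The idea you are missing — and the one the paper uses — is an index shift, which makes the corollary a few lines of algebra. From the definition of a continued fraction, $\textup{CF}[a_{n+1},b_{n+1}]=b_1/\bigl(\textup{CF}[a_n,b_n]-a_0\bigr)$. Since $3(n+2)^2+3(n+2)+1=3n^2+15n+19$ and $-2(n+2)^3(n+\gamma)$ is the shift by one of $-2(n+1)^3(n+\gamma-1)$, the $\gamma$-generalized fraction here is the tail of the family treated in the proof of Corollary~\ref{c9}, giving $\textup{CF}[3n^2+15n+19,-2(n+2)^3(n+\gamma)]=-16\gamma/\bigl(\textup{CF}[3n^2+9n+7,-2(n+1)^3(n+\gamma-1)]-7\bigr)$. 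The proof of Corollary~\ref{c9} (itself the same shift applied to the $H(0,0;\cdot)$ family of Corollary~\ref{c1}) expresses that denominator through $H(0,0;\gamma-1)$; the second relation of Theorem~\ref{t2.2} converts $H(0,0;\gamma-1)$ into $H(0,0;\gamma)$; and the regularity of $H$ (Lemma~\ref{l2.3}) justifies letting $\gamma\to0$ — a limit that is genuinely needed, because $b_1=-16\gamma$ vanishes at $\gamma=0$ and the shift identity degenerates there. Substituting $H(0,0;0)=2G$ from Corollary~\ref{c1} then yields $\tfrac{8}{54G-49}$. So while your differential-equation setup is sound and could in principle be pushed through, as written it stops short of a proof precisely at the hard analytic stage, all of which the paper's shift-plus-continuation argument bypasses.
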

\begin{cor}\label{c22}
    $$\frac{12}{83-90G}=\textup{CF}[3n^2+17n+23,-2n(n+1)(n+3)^2]$$
\end{cor}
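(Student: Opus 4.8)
The plan is to recognize first that Corollary~\ref{c22} is \emph{not} a direct specialization of Theorem~\ref{t2.1}, and then to recover it by rerunning the generating-function computation of that theorem for a slightly wider class of coefficients. Indeed, matching $b_n=-2n(n+1)(n+3)^2$ against $-2n(n+\alpha)(n+\beta)(n+\gamma)$ forces $\{\alpha,\beta,\gamma\}=\{1,3,3\}$, whereas $a_n=3n^2+17n+23=\Delta\bigl(n(n+\alpha)(n+\beta)\bigr)$ forces $\alpha+\beta=7$ and $\alpha\beta=15$; these are incompatible. The clean way to see the obstruction is to write $a_n=\Delta g$: the antidifference $g$ that also factors $b_n=-2g(n)\,(n+s-1)$ is $g=(n+1)(n+3)^2$ with $s=1$, and this $g$ has a nonzero constant term, i.e. no root at $n=0$. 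A root at $0$ is exactly the feature of Theorem~\ref{t2.1} that makes the reduction work, so the theorem does not apply verbatim.

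First I would redo the passage from (\ref{e2.2}) to (\ref{e2.3}) for a general monic cubic $g$, keeping $c(n)=-2g(n)$, $a(n)=\Delta g(n)$ and $b(n)=-2g(n)(n+s-1)$; here $s=1$ and $g=(n+1)(n+3)^2$. The key observation is that the cancellation of the $D^3$ and $D^1$ coefficients at $\delta=\tfrac12$ used only the leading coefficients $3,-2$ and the relations $c=-2g$, $a=\Delta g$, and \emph{not} the vanishing of $g$ at $0$. Hence the same shift collapses (\ref{e2.3}) to the third-order equation
$$\bigl\{(\vartheta+9)D^2-4(\vartheta+2)(\vartheta+4)^2\bigr\}y=0,$$
in which $(\vartheta+2)(\vartheta+4)^2=g(\vartheta+1)$.

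Next I would reduce the order as in the proof of Theorem~\ref{t2.1}, but noting that the common left factor is now $(\vartheta+2)$ instead of $(\vartheta+1)$: Lemma~\ref{l2.1} gives $(\vartheta+9)D^2=(\vartheta+2)(\vartheta+9)(\vartheta+1)t^{-2}$, so the operator factors through $(\vartheta+2)$, and since $(\vartheta+2)t^{-2}=0$ the equation reduces to the inhomogeneous second-order equation
$$\bigl\{(\vartheta+9)(\vartheta+1)t^{-2}-4(\vartheta+4)^2\bigr\}y=\frac{C}{t^2}.$$
Setting $u=(2t)^2=(2x-1)^2$ and then $v=1-u=4x(1-x)$, exactly as in Theorem~\ref{t2.1}, converts this into a Fuchsian equation with singular points $\{0,1,\infty\}$, hence a hypergeometric equation; solving the recurrence for the Taylor coefficients of $y$ at $v=0$, reading off $A_n$ and $B_n$ from the coefficients of $x^n$, and taking $\lim_{n\to\infty}A_n/B_n$ should then yield the value, with an asymptotic estimate in the spirit of Lemma~\ref{l2.3} justifying which characteristic root dominates.

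The main obstacle is that, because $g$ has no root at $0$, the reduced equation is \emph{not} the clean form $\bigl\{(\vartheta+c)D-(\vartheta+a)(\vartheta+b)\bigr\}y=\text{const}$ appearing in Theorem~\ref{t2.1}: the leftover factor is the polynomial $(\vartheta+1)$ against $t^{-2}$ rather than a single $\vartheta$, and the forcing term is $t^{-2}$ rather than $t^{-1}$. Consequently the coefficient recurrence, the convergent formulas for $A_n,B_n$, and the dominant-root argument all have to be recomputed, and the resulting series must be resummed and identified with $\tfrac{12}{83-90G}$; that final identification with Catalan's constant is where I expect the real effort to lie. Should this adapted differential-equation route become too delicate, the safe alternative is the second method announced in the introduction: apply Petkov\v{s}ek's algorithm to $y_{n+1}=(3n^2+17n+23)y_n-2n(n+1)(n+3)^2y_{n-1}$ to produce an explicit hypergeometric solution $y_n$, and then evaluate the continued fraction through the series representation of Lemma~\ref{l3.1}, trading the nonstandard ODE for a direct summation in terms of $G$.
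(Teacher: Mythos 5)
You have diagnosed correctly that Theorem~\ref{t2.1} does not apply verbatim (the antidifference $g(n)=(n+1)(n+3)^2$ has no root at $n=0$), and your opening reduction is in fact sound: with $s=1$ the shift $\delta=\tfrac12$ kills the $D^3$ and $D$ coefficients and yields $\{(\vartheta+9)D^2-4(\vartheta+2)(\vartheta+4)^2\}y=0$, and the left factor $(\vartheta+2)$ peels off to give $\{(\vartheta+9)(\vartheta+1)t^{-2}-4(\vartheta+4)^2\}y=C/t^2$. But from this point the plan has a genuine gap, and it is larger than you estimate. Writing the equation in $u=(2t)^2$ gives $\{(\vartheta_u-\tfrac12)(\vartheta_u+\tfrac72)-u(\vartheta_u+2)^2\}y=\tfrac{C}{4}$, and unlike in Theorem~\ref{t2.1} neither characteristic exponent at $u=0$ is zero; consequently the substitution $u=1-v$ does \emph{not} return an operator of the two-term shape, and the Taylor coefficients of $y$ at $v=0$ satisfy a three-term recurrence with no evident closed form. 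To restore a two-term recurrence one must first extract the factor $u^{1/2}=1-2x$, i.e. write $y=(1-2x)\,w(v)$; the function $w$ then does satisfy a hypergeometric equation, but with parameters $a=b=\tfrac52$, $c=5$, which is exactly the degenerate case $c-a-b=0$ — the logarithmic case excluded by the hypothesis $\frac{\alpha+\beta+1}{2}-\gamma\notin\mathbb{Z}_{\leq0}$ of Lemma~\ref{l2.3} — so the radius-of-convergence and dominant-root arguments must be reworked with the logarithmic connection formulas. Even granting all of that, you are left with a new series (of the shape $\sum_k 4^k(k!)^2\big/\{(2k)!\,k(2k+1)^2(2k+3)^2\}$) whose evaluation in terms of $G$ is the entire content of the corollary and is nowhere carried out. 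The fallback is also unsupported: the paper states that only Corollaries~\ref{c23}--\ref{c31} can be proved via Petkov\v{s}ek's algorithm and Lemma~\ref{l3.1}, and no hypergeometric solution of this recurrence is exhibited, so you cannot rely on one existing.

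The idea you are missing is that no new analysis is needed at all: Corollary~\ref{c22} follows from machinery already in the paper. The key observation is that $3n^2+17n+23$ and $-2(n+1)(n+3)^2(n+\gamma)$ are precisely the index shifts $n\mapsto n+1$ of $3n^2+11n+9$ and $-2n(n+2)^2(n+\gamma-1)$, i.e. of the family $\alpha=\beta=2$ covered by Theorems~\ref{t2.1} and~\ref{t2.2}. The tail identity $\textup{CF}[a_{n+1},b_{n+1}]=b_1/\bigl(\textup{CF}[a_n,b_n]-a_0\bigr)$, with $a_0=9$ and $b_1=-18\gamma$, gives
$$\textup{CF}\left[3n^2+17n+23,\,-2(n+1)(n+3)^2(n+\gamma)\right]=\frac{-18\gamma}{\dfrac{1}{H(2,2;\gamma-1)}-9},$$
the second contiguous relation of Theorem~\ref{t2.2} converts $H(2,2;\gamma-1)$ into $H(2,2;\gamma)$, and letting $\gamma\to0$ (legitimate by the regularity established in Lemma~\ref{l2.3}) and inserting $H(2,2;0)=\frac{6G-5}{4}$ from Corollary~\ref{c13} produces $\frac{12}{83-90G}$ in a few lines of algebra. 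The perturbation by $\gamma$ is exactly what handles the degeneration $b_1=0$ at $\gamma=0$; it is the same device the paper uses for Corollaries~\ref{c9}, \ref{c17} and \ref{c21}. Your route, by contrast, would require redoing the analytic theory of Section~2 in a degenerate case and then summing a new Catalan-type series from scratch.
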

\begin{proof}[Proof of Corollary~{\upshape\ref{c1}}]
    By substituting $\alpha=\beta=\gamma=0$ for Theorem~{\upshape\ref{t2.1}}, we have
    \begin{align*}
        \textup{CF}[3n^2+3n+1,-2n^4]&=\frac{1}{H(0,0;0)}=4\left\{\sum_{n=0}^\infty\frac{\left(\frac{1}{2}\right)_n(1)_n}{\bigl(\frac{1}{2}\bigr)_{n+1}\bigl(\frac{1}{2}\bigr)_{n+1}}\right\}^{-1}\\
        &=\left\{\sum_{n=0}^\infty\frac{(n!)^2}{(2n)!}\frac{4^{n}}{(2n+1)^2}\right\}^{-1}\\
        &=\frac{1}{2G},
    \end{align*}
    where we use \cite[Eq.3.114]{GS}.
\end{proof}
\begin{proof}[Proof of Corollary~{\upshape\ref{c2}}]
    By Theorem~{\upshape\ref{t2.2}} and Corollary~{\upshape\ref{c1}}, we have
    \begin{align*}
        \textup{CF}[3n^2+3n+1,-2n^3(n+1)]=\frac{1}{H(0,0;1)}=\frac{1}{\frac{H(0,0;0)-1}{2}}=\frac{2}{2G-1}.
    \end{align*}
\end{proof}
\begin{proof}[Proof of Corollary~{\upshape\ref{c3}}]
    By Theorem~{\upshape\ref{t2.2}} and Corollary~{\upshape\ref{c2}}, we have
    \begin{align*}
        \textup{CF}[3n^2+3n+1,-2n^3(n+2)]=\frac{1}{H(0,0;2)}=\frac{1}{\frac{9H(0,0;1)-1}{12}}=\frac{24}{18G-11}.
    \end{align*}
\end{proof}
\begin{proof}[Proof of Corollary~{\upshape\ref{c4}}]
    By Theorem~{\upshape\ref{t2.2}} and Corollary~{\upshape\ref{c3}}, we have
    \begin{align*}
        \textup{CF}[3n^2+3n+1,-2n^3(n+3)]=\frac{1}{H(0,0;3)}=\frac{1}{\frac{25H(0,0;2)-1}{30}}=\frac{720}{450G-299}.
    \end{align*}
\end{proof}
\begin{proof}[Proof of Corollary~{\upshape\ref{c5}}]
    By Theorem~{\upshape\ref{t2.2}} and Corollary~{\upshape\ref{c1}}, we have
    \begin{align*}
        \textup{CF}[3n^2+7n+3,-2n^3(n+2)]=\frac{1}{H(2,0;0)}=\frac{1}{\frac{H(0,0;0)-1}{2}}=\frac{2}{2G-1}.
    \end{align*}
\end{proof}
\begin{proof}[Proof of Corollary~{\upshape\ref{c6}}]
    By Theorem~{\upshape\ref{t2.2}} and Corollary~{\upshape\ref{c5}}, we have
    \begin{align*}
        \textup{CF}[3n^2+7n+3,-2n^2(n+1)(n+2)]=\frac{1}{H(2,0;1)}=\frac{1}{\frac{-H(2,0;0)-1}{-2}}=\frac{16}{6G-1}.
    \end{align*}
\end{proof}
\begin{proof}[Proof of Corollary~{\upshape\ref{c7}}]
    By Theorem~{\upshape\ref{t2.2}} and Corollary~{\upshape\ref{c6}}, we have
    \begin{align*}
        \textup{CF}[3n^2+7n+3,-2n^2(n+2)^2]=\frac{1}{H(2,0;2)}=\frac{1}{\frac{3H(2,0;1)-1}{4}}=\frac{4}{2G+1}.
    \end{align*}
\end{proof}
\begin{proof}[Proof of Corollary~{\upshape\ref{c8}}]
    By Theorem~{\upshape\ref{t2.2}} and Corollary~{\upshape\ref{c7}}, we have
    \begin{align*}
        \textup{CF}[3n^2+7n+3,-2n^2(n+2)(n+3)]=\frac{1}{H(2,0;3)}=\frac{1}{\frac{15H(2,0;2)-1}{18}}=\frac{288}{90G-31}.
    \end{align*}
\end{proof}
\begin{proof}[Proof of Corollary~{\upshape\ref{c9}}]
    By Theorem~{\upshape\ref{t2.2}}, we have
    \begin{align*}
        \textup{CF}[3n^2+9n+7,-2(n+1)^3(n+\gamma)]&=\frac{-2\gamma}{\textup{CF}[3n^2+3n+1,-2n^3(n+\gamma-1)]-1}\\
        &=\frac{-2\gamma}{\frac{1}{H(0,0;\gamma-1)}-1}\\
        &=\frac{-2\gamma}{\frac{(2\gamma-1)^2}{2\gamma(2\gamma-1)H(0,0;\gamma)+1}-1}\\
        &=\frac{2\gamma(2\gamma-1)H(0,0;\gamma)+1}{(2\gamma-1)H(0,0;\gamma)-2\gamma+2}.
    \end{align*}
    By putting $\gamma\rightarrow0$, from Corollary~{\upshape\ref{c1}}, we get
    $$\textup{CF}[3n^2+9n+7,-2n(n+1)^3]=\frac{1}{2-2G}.$$
\end{proof}
\begin{proof}[Proof of Corollary~{\upshape\ref{c10}}]
    By Theorem~{\upshape\ref{t2.2}} and Corollary~{\upshape\ref{c5}}, we have
    \begin{align*}
        \textup{CF}[3n^2+11n+5,-2n^3(n+4)]=\frac{1}{H(4,0;0)}=\frac{1}{\frac{9H(2,0;0)-1}{12}}=\frac{24}{18G-11}.
    \end{align*}
\end{proof}
\begin{proof}[Proof of Corollary~{\upshape\ref{c11}}]
    By Theorem~{\upshape\ref{t2.2}} and Corollary~{\upshape\ref{c10}}, we have
    \begin{align*}
        \textup{CF}[3n^2+11n+5,-2n^2(n+1)(n+4)]=\frac{1}{H(4,0;1)}=\frac{1}{\frac{-3H(4,0;0)-1}{-6}}=\frac{16}{6G-1}.
    \end{align*}
\end{proof}
\begin{proof}[Proof of Corollary~{\upshape\ref{c12}}]
    By Theorem~{\upshape\ref{t2.2}} and Corollary~{\upshape\ref{c11}}, we have
    \begin{align*}
        \textup{CF}[3n^2+11n+5,-2n^2(n+2)(n+4)]=\frac{1}{H(4,0;2)}=\frac{1}{\frac{-3H(4,0;1)-1}{-4}}=\frac{64}{18G-13}.
    \end{align*}
\end{proof}
\begin{proof}[Proof of Corollary~{\upshape\ref{c13}}]
    By Theorem~{\upshape\ref{t2.2}} and Corollary~{\upshape\ref{c5}}, we have
    \begin{align*}
        \textup{CF}[3n^2+11n+9,-2n^2(n+2)^2]=\frac{1}{H(2,2;0)}=\frac{1}{\frac{3H(2,0;0)-1}{2}}=\frac{4}{6G-5}.
    \end{align*}
\end{proof}
\begin{proof}[Proof of Corollary~{\upshape\ref{c14}}]
    By Theorem~{\upshape\ref{t2.2}} and Corollary~{\upshape\ref{c13}}, we have
    \begin{align*}
        \textup{CF}[3n^2+11n+9,-2n(n+1)(n+2)^2]=\frac{1}{H(2,2;1)}=\frac{1}{\frac{H(2,2;0)-1}{-6}}=\frac{8}{3-2G}.
    \end{align*}
\end{proof}
\begin{proof}[Proof of Corollary~{\upshape\ref{c15}}]
    By Theorem~{\upshape\ref{t2.2}} and Corollary~{\upshape\ref{c14}}, we have
    \begin{align*}
        \textup{CF}[3n^2+11n+9,-2n(n+2)^3]=\frac{1}{H(2,2;2)}=\frac{1}{\frac{H(2,2;1)-1}{-4}}=\frac{32}{2G+5}.
    \end{align*}
\end{proof}
\begin{proof}[Proof of Corollary~{\upshape\ref{c16}}]
    By Theorem~{\upshape\ref{t2.2}} and Corollary~{\upshape\ref{c15}}, we have
    \begin{align*}
        \textup{CF}[3n^2+11n+9,-2n(n+2)^2(n+3)]=\frac{1}{H(2,2;3)}=\frac{1}{\frac{9H(2,2;2)-1}{6}}=\frac{192}{18G+13}.
    \end{align*}
\end{proof}
\begin{proof}[Proof of Corollary~{\upshape\ref{c17}}]
    By Theorem~{\upshape\ref{t2.2}}, we have
    \begin{align*}
        \textup{CF}[3n^2+13n+13,-2(n+1)^2(n+3)(n+\gamma)]&=\frac{-6\gamma}{\textup{CF}[3n^2+7n+3,-2n^2(n+2)(n+\gamma-1)]-3}\\
        &=\frac{-6\gamma}{\frac{1}{H(2,0;\gamma-1)}-3}\\
        &=\frac{-6\gamma}{\frac{(2\gamma-1)(2\gamma-3)}{2\gamma(2\gamma-3)H(2,0;\gamma)+1}-3}\\
        &=\frac{6\gamma(2\gamma-3)H(2,0;\gamma)+3}{3(2\gamma-3)H(2,0;\gamma)-2\gamma+4}.
    \end{align*}
    By putting $\gamma\rightarrow0$, from Corollary~{\upshape\ref{c5}}, we get
    $$\textup{CF}[3n^2+13n+13,-2n(n+1)^2(n+3)]=\frac{6}{17-18G}.$$
\end{proof}
\begin{proof}[Proof of Corollary~{\upshape\ref{c18}}]
    By Theorem~{\upshape\ref{t2.2}} and Corollary~{\upshape\ref{c13}}, we have
    \begin{align*}
        \textup{CF}[3n^2+15n+15,-2n^2(n+2)(n+4)]=\frac{1}{H(4,2;0)}=\frac{1}{\frac{15H(2,2;0)-1}{12}}=\frac{48}{90G-79}.
    \end{align*}
\end{proof}
\begin{proof}[Proof of Corollary~{\upshape\ref{c19}}]
    By Theorem~{\upshape\ref{t2.2}} and Corollary~{\upshape\ref{c18}}, we have
    \begin{align*}
        \textup{CF}[3n^2+15n+15,-2n(n+1)(n+2)(n+4)]=\frac{1}{H(4,2;1)}=\frac{1}{\frac{3H(4,2;0)-1}{-10}}=\frac{32}{19-18G}.
    \end{align*}
\end{proof}
\begin{proof}[Proof of Corollary~{\upshape\ref{c20}}]
    By Theorem~{\upshape\ref{t2.2}} and Corollary~{\upshape\ref{c19}}, we have
    \begin{align*}
        \textup{CF}[3n^2+15n+15,-2n(n+2)^2(n+4)]=\frac{1}{H(4,2;2)}=\frac{1}{\frac{-H(4,2;1)-1}{-12}}=\frac{128}{17-6G}.
    \end{align*}
\end{proof}
\begin{proof}[Proof of Corollary~{\upshape\ref{c21}}]
    By Theorem~{\upshape\ref{t2.2}} and the proof of Corollary~{\upshape\ref{c9}}, we have
    \begin{align*}
        &\textup{CF}[3n^2+15n+19,-2(n+2)^3(n+\gamma)]\\
        &=\frac{-16\gamma}{\textup{CF}[3n^2+9n+7,-2(n+1)^3(n+\gamma-1)]-7}\\
        &=\frac{-16\gamma}{\frac{2(\gamma-1)(2\gamma-3)H(0,0;\gamma-1)+1}{(2\gamma-3)H(0,0;\gamma-1)-2(\gamma-2)}-7}\\
        &=\frac{16\gamma}{2\gamma-9}\left\{-1+\frac{(2\gamma-3)^2}{(2\gamma-3)(2\gamma-9)H(0,0;\gamma-1)+14\gamma-27}\right\}\\
        &=\frac{16\gamma}{2\gamma-9}\left\{-1+\frac{(2\gamma-3)^2}{(2\gamma-3)(2\gamma-9)\frac{2\gamma(2\gamma-1)H(0,0;\gamma)+1}{(2\gamma-1)^2}+14\gamma-27}\right\}\\
        &=-\frac{16\gamma}{2\gamma-9}+\frac{8}{2\gamma-9}\frac{(2\gamma-1)^2(2\gamma-3)^2}{(2\gamma-1)(2\gamma-3)(2\gamma-9)H(0,0;\gamma)+28\gamma^2-80\gamma+49}.
    \end{align*}
    By putting $\gamma\rightarrow0$, from Corollary~{\upshape\ref{c1}}, we get
    $$\textup{CF}[3n^2+15n+19,-2n(n+2)^3]=\frac{8}{54G-49}.$$
\end{proof}
\begin{proof}[Proof of Corollary~{\upshape\ref{c22}}]
    By Theorem~{\upshape\ref{t2.2}}, we have
    \begin{align*}
        \textup{CF}[3n^2+17n+23,-2(n+1)(n+3)^2(n+\gamma)]&=\frac{-18\gamma}{\textup{CF}[3n^2+11n+9,-2n(n+2)^2(n+\gamma-1)]-9}\\
        &=\frac{-18\gamma}{\frac{1}{H(2,2;\gamma-1)}-9}\\
        &=\frac{-18\gamma}{\frac{(2\gamma-3)^2}{2\gamma(2\gamma-5)H(2,2;\gamma)+1}-9}\\
        &=\frac{18\gamma(2\gamma-5)H(2,2;\gamma)+9}{9(2\gamma-5)H(2,2;\gamma)-2\gamma+6}.
    \end{align*}
    By putting $\gamma\rightarrow0$, from Corollary~{\upshape\ref{c13}}, we get
    $$\textup{CF}[3n^2+17n+23,-2n(n+1)(n+3)^2]=\frac{12}{83-90G}.$$
\end{proof}
There are some cases where different continued fractions converges to the same value like Corollaries~{\upshape\ref{c2}} and~{\upshape\ref{c5}} This fact is generalized as follows.
\begin{thm}\label{t2.3}
    For arbitrary non-negative integers $p$ and $q$, we have
    $$H(2p,0;q)=H(2q,0;p).$$
\end{thm}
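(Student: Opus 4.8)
The plan is to turn the two functional equations of Theorem~\ref{t2.2} into a pair of coupled recurrences for the integer values $H(2p,0;q)$ and then exploit a symmetry of that pair under the interchange $p\leftrightarrow q$. Throughout write $h_{p,q}=H(2p,0;q)$. For all non-negative integers $p,q$ the arguments $\tfrac{2p+1}{2},\tfrac12,q+1,\tfrac{2p+1}{2}-q$ are never in $\mathbb{Z}_{\le0}$ (the last three being positive integers or half-integers), so each $h_{p,q}$ is well defined by Lemma~\ref{l2.3}, and both identities of Theorem~\ref{t2.2} are available on this domain. The same check applies to the neighbouring values $h_{p-1,q}$ and $h_{p,q-1}$, so every $H$ invoked below has admissible arguments.

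First I would substitute $\alpha=2p$, $\beta=0$, $\gamma=q$ into the two formulas of Theorem~\ref{t2.2}. In the first formula the factor $\alpha+\beta-2\gamma-1$ collapses to $2p-2q-1$, which is then common to both $H$-terms; in the second the factor $2\gamma-\alpha-1=2q-2p-1$ is likewise common. Since $2p-2q-1$ and $2q-2p-1$ are odd integers they never vanish, so cancelling them yields
\begin{align}
    2p\,h_{p,q}-(2p-1)\,h_{p-1,q}&=\frac{-1}{2p-2q-1}, \label{eqA}\\
    2q\,h_{p,q}-(2q-1)\,h_{p,q-1}&=\frac{-1}{2q-2p-1}, \label{eqB}
\end{align}
valid for $p\ge1$ and for $q\ge1$ respectively.

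The key observation is that the interchange $p\leftrightarrow q$ swaps (\ref{eqA}) and (\ref{eqB}). Concretely, set $g_{p,q}=h_{q,p}$. Replacing $(p,q)$ by $(q,p)$ in (\ref{eqB}) gives $2p\,h_{q,p}-(2p-1)\,h_{q,p-1}=\frac{-1}{2p-2q-1}$, i.e. $2p\,g_{p,q}-(2p-1)\,g_{p-1,q}=\frac{-1}{2p-2q-1}$, which is exactly (\ref{eqA}) for $g$; in the same way (\ref{eqA}) under $(p,q)\mapsto(q,p)$ becomes (\ref{eqB}) for $g$. Hence $g_{p,q}$ satisfies the very same pair of recurrences as $h_{p,q}$, on the same ranges of indices.

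Finally I would invoke uniqueness. Because the leading coefficients $2p$ and $2q$ are nonzero in the stated ranges, (\ref{eqA}) expresses $f_{p,q}$ through $f_{p-1,q}$ for $p\ge1$, and (\ref{eqB}) with $p=0$ expresses $f_{0,q}$ through $f_{0,q-1}$ for $q\ge1$; thus any solution of the system is determined by its single value $f_{0,0}$. Since $g_{0,0}=h_{0,0}=H(0,0;0)$, a straightforward double induction (reduce the first index to $0$ by (\ref{eqA}), then the second by (\ref{eqB})) gives $h_{p,q}=g_{p,q}$ for all $p,q\ge0$, which is the asserted identity $H(2p,0;q)=H(2q,0;p)$. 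I expect the only real obstacle to be bookkeeping: verifying that the common factors cancel exactly as claimed and that no application of Theorem~\ref{t2.2} ever produces an $H$-value with an argument in $\mathbb{Z}_{\le0}$. The conceptual content lies entirely in the symmetry of (\ref{eqA}) and (\ref{eqB}), after which the conclusion is immediate.
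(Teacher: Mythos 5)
Your proof is correct and follows essentially the same route as the paper: substitute $\alpha=2p$, $\beta=0$, $\gamma=q$ into the two identities of Theorem~\ref{t2.2} to obtain a pair of recurrences for $H_{p,q}=H(2p,0;q)$ that is symmetric under $p\leftrightarrow q$, and conclude that $H_{p,q}$ itself is symmetric. The only difference is one of detail: the paper compresses the final step into the single remark that a symmetric recurrence forces a symmetric solution, whereas you explicitly supply the uniqueness-plus-induction argument (every solution of the system is determined by its value at $(0,0)$), which is a welcome tightening of the same idea.
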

\begin{proof}
    Assume that $H_{p,q}\coloneqq H(2p,0;q)$. By Theorem~{\upshape\ref{t2.2}}, $H_{p,q}$ satisfies
    \begin{align*}
        \left\{\begin{array}{l}
2p(2p-2q-1)H_{p,q}-(2p-1)(2p-2q-1)H_{p-1,q}+1=0, \\
2q(2q-2p-1)H_{p,q}-(2q-1)(2q-2p-1)H_{p,q-1}+1=0.
\end{array}
\right.
    \end{align*}
    Since this recurrence formula is symmetric with respect to $p$ and $q$, $H_{p,q}$ is symmetric with respect to $p$ and $q$. This completes the proof.
\end{proof}

The following are corollaries involving $\pi^2$. Corollary~{\upshape\ref{c23}} has already been proved by \cite[Theorem 3]{RMProof} in a different way, which is introduced  more generally in section~{\upshape\ref{sec3}}.
\begin{cor}\label{c23}
    $$\frac{8}{\pi^2}=\textup{CF}[3n^2+3n+1,-n^3(2n-1)]$$
\end{cor}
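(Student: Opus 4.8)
The plan is to realize Corollary~\ref{c23} as the special case $\alpha=\beta=0$, $\gamma=-\tfrac12$ of Theorem~\ref{t2.1}, exactly as Corollary~\ref{c1} was the case $\alpha=\beta=\gamma=0$. First I would confirm the parameter matching. With $\alpha=\beta=0$ the partial numerator is $\Delta\bigl(n(n+\alpha)(n+\beta)\bigr)=\Delta(n^3)=3n^2+3n+1$, which is correct, and the partial denominator is $-2n(n+\alpha)(n+\beta)(n+\gamma)=-2n^3(n+\gamma)$; this equals $-n^3(2n-1)=-2n^3\bigl(n-\tfrac12\bigr)$ precisely when $\gamma=-\tfrac12$. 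So the only subtlety is that $\gamma$ is now a half-integer rather than a non-negative integer.

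Next I would verify the hypotheses of Theorem~\ref{t2.1}. We have $\Re(2\gamma-\alpha-\beta)=-1<1$, and $\tfrac{\alpha+1}{2}=\tfrac12$, $\tfrac{\beta+1}{2}=\tfrac12$, $\gamma+1=\tfrac12$ are none of them in $\mathbb{Z}_{\leq0}$, so the theorem applies directly and no appeal to Lemma~\ref{l2.3} or Theorem~\ref{t2.2} is needed. Substituting these values gives
\begin{equation*}
\textup{CF}[3n^2+3n+1,-n^3(2n-1)]=\frac{1}{H\bigl(0,0;-\tfrac12\bigr)}=4\left\{\sum_{n=0}^\infty\frac{\bigl(\tfrac12\bigr)_n\bigl(\tfrac12\bigr)_n}{\bigl(\tfrac12\bigr)_{n+1}\bigl(\tfrac12\bigr)_{n+1}}\right\}^{-1}.
\end{equation*}

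The remaining step is to evaluate this series in closed form, which is where the constant $\pi^2$ enters. Using $\bigl(\tfrac12\bigr)_{n+1}=\bigl(n+\tfrac12\bigr)\bigl(\tfrac12\bigr)_n$, each summand collapses to $\bigl(n+\tfrac12\bigr)^{-2}=4/(2n+1)^2$, so the series equals $\sum_{n=0}^\infty 4/(2n+1)^2=4\cdot\tfrac{\pi^2}{8}=\tfrac{\pi^2}{2}$ by the classical value $\sum_{n=0}^\infty(2n+1)^{-2}=\pi^2/8$. Hence the continued fraction equals $4/(\pi^2/2)=8/\pi^2$. I do not expect any genuine obstacle here: unlike Corollary~\ref{c1}, whose series required the nontrivial evaluation $\sum (n!)^2 4^n/\bigl((2n)!(2n+1)^2\bigr)=2G$, the half-integer choice $\gamma=-\tfrac12$ makes the Pochhammer ratios cancel completely, leaving only an elementary Basel-type sum. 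The one point that deserves a line of justification is simply that $\gamma=-\tfrac12$ still satisfies the convergence condition $\Re(2\gamma-\alpha-\beta)<1$, so that Theorem~\ref{t2.1} may be invoked without recourse to analytic continuation.
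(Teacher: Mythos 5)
Your proposal is correct and follows exactly the paper's own proof: both substitute $\alpha=\beta=0$, $\gamma=-\tfrac{1}{2}$ into Theorem~\ref{t2.1}, collapse the Pochhammer ratios to $4/(2n+1)^2$, and conclude via $\sum_{n=0}^\infty (2n+1)^{-2}=\pi^2/8$. Your added verification of the hypotheses of Theorem~\ref{t2.1} (so that no analytic continuation via Theorem~\ref{t2.2} is needed) is a detail the paper leaves implicit, but the argument is the same.
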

\begin{cor}\label{c24}
$$\frac{16}{4+\pi^2}=\textup{CF}[3n^2+3n+1,-n^3(2n-3)]$$
\end{cor}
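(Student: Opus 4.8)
The plan is to read Corollary~\ref{c24} as the computation of $1/H(0,0;-\tfrac32)$ and to obtain it from Corollary~\ref{c23} by one step of the $\gamma$-recurrence in Theorem~\ref{t2.2}. First I would match the data to the normal form of Theorem~\ref{t2.1}: since $3n^2+3n+1=\Delta(n^3)$ and $-n^3(2n-3)=-2n^3\bigl(n-\tfrac32\bigr)$, the continued fraction in Corollary~\ref{c24} is exactly $\textup{CF}\bigl[\Delta(n(n+\alpha)(n+\beta)),-2n(n+\alpha)(n+\beta)(n+\gamma)\bigr]$ with $\alpha=\beta=0$ and $\gamma=-\tfrac32$, so its value equals $1/H(0,0;-\tfrac32)$. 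The same matching applied to Corollary~\ref{c23}, where $-n^3(2n-1)=-2n^3\bigl(n-\tfrac12\bigr)$, identifies it with $\gamma=-\tfrac12$; thus Corollary~\ref{c23} is precisely the statement $H(0,0;-\tfrac12)=\tfrac{\pi^2}{8}$, which I take as given.

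Next I would specialize the second identity of Theorem~\ref{t2.2} to $\alpha=\beta=0$, obtaining $2\gamma(2\gamma-1)H(0,0;\gamma)-(2\gamma-1)^2H(0,0;\gamma-1)+1=0$. Putting $\gamma=-\tfrac12$ collapses this to $2H(0,0;-\tfrac12)-4H(0,0;-\tfrac32)+1=0$, i.e.\ $H(0,0;-\tfrac32)=\tfrac14\bigl(2H(0,0;-\tfrac12)+1\bigr)$. Substituting $H(0,0;-\tfrac12)=\tfrac{\pi^2}{8}$ from Corollary~\ref{c23} gives $H(0,0;-\tfrac32)=\tfrac{4+\pi^2}{16}$, and inverting yields $\textup{CF}[3n^2+3n+1,-n^3(2n-3)]=\tfrac{16}{4+\pi^2}$, as claimed.

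The one point that needs care, and which I would treat as the main (if minor) obstacle, is confirming that Theorem~\ref{t2.2} and the regularity of $H$ from Lemma~\ref{l2.3} genuinely apply at the half-integer parameters involved. For $(\alpha,\beta,\gamma)=(0,0,-\tfrac12)$ the four excluded quantities $\tfrac{\alpha+1}{2},\tfrac{\beta+1}{2},\gamma+1,\tfrac{\alpha+\beta+1}{2}-\gamma$ equal $\tfrac12,\tfrac12,\tfrac12,1$, and at $\gamma=-\tfrac32$ they equal $\tfrac12,\tfrac12,-\tfrac12,2$; none is a non-positive integer, so the recurrence is valid and both $H(0,0;-\tfrac12)$ and $H(0,0;-\tfrac32)$ are regular. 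As an independent check one may instead invoke Theorem~\ref{t2.1} directly---the hypothesis $\Re(2\gamma-\alpha-\beta)=-3<1$ holds---whereupon $\tfrac14\sum_{n=0}^\infty\frac{(1/2)_n(-1/2)_n}{\bigl((1/2)_{n+1}\bigr)^2}$ reduces by partial fractions to a telescoping series together with $\tfrac12\sum_n(2n+1)^{-2}$ and evaluates to $\tfrac{4+\pi^2}{16}$, recovering the same value; but the route through Corollary~\ref{c23} is the shortest.
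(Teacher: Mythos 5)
Your proposal is correct and follows essentially the same route as the paper: the paper likewise identifies the continued fraction with $1/H\left(0,0;-\frac{3}{2}\right)$ and applies the second identity of Theorem~\ref{t2.2} at $\alpha=\beta=0$, $\gamma=-\frac{1}{2}$ to get $H\left(0,0;-\frac{3}{2}\right)=\frac{2H\left(0,0;-\frac{1}{2}\right)+1}{4}$, then invokes Corollary~\ref{c23}. Your explicit verification of the non-degeneracy conditions and the independent check via Theorem~\ref{t2.1} are sound additions (the paper also gives a second, quite different proof in Section~\ref{sec3} via Petkov\v{s}ek's algorithm), but your main argument is the paper's own.
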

\begin{cor}\label{c25}
$$\frac{16}{-4+\pi^2}=\textup{CF}[3n^2+7n+3,-n^2(n+2)(2n-1)]$$
\end{cor}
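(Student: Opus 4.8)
The plan is to recognise the left-hand side as a value of the function $H$ and then reduce it to Corollary~\ref{c23} by a single application of Theorem~\ref{t2.2}. First I would match the polynomial data to the template of Theorem~\ref{t2.1}. Writing $a(n)=3n^2+7n+3=\Delta\bigl(n(n+2)n\bigr)$ forces $\{\alpha,\beta\}=\{2,0\}$, and factoring $b(n)=-n^2(n+2)(2n-1)=-2n(n+2)(n+0)\bigl(n-\tfrac12\bigr)$ forces $\gamma=-\tfrac12$. Hence the claim is equivalent to $H(2,0;-\tfrac12)=\tfrac{\pi^2-4}{16}$.

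Next I would check that Theorem~\ref{t2.2} is applicable at these (half-integer) parameters. For both $H(2,0;-\tfrac12)$ and $H(0,0;-\tfrac12)$ the four quantities $\tfrac{\alpha+1}{2},\tfrac{\beta+1}{2},\gamma+1,\tfrac{\alpha+\beta+1}{2}-\gamma$ avoid $\mathbb{Z}_{\le0}$: they equal $\tfrac32,\tfrac12,\tfrac12,2$ and $\tfrac12,\tfrac12,\tfrac12,1$ respectively. Thus the regularity guaranteed by Lemma~\ref{l2.3} legitimises the first recurrence of Theorem~\ref{t2.2}. Substituting $\alpha=2,\beta=0,\gamma=-\tfrac12$ into $\alpha(\alpha-2\gamma-1)H(\alpha,\beta;\gamma)-(\alpha-1)(\alpha+\beta-2\gamma-1)H(\alpha-2,\beta;\gamma)+1=0$ collapses the coefficients to $4H(2,0;-\tfrac12)-2H(0,0;-\tfrac12)+1=0$, i.e. $H(2,0;-\tfrac12)=\tfrac{2H(0,0;-\tfrac12)-1}{4}$.

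It then remains only to supply $H(0,0;-\tfrac12)$, which is exactly Corollary~\ref{c23}: since $\textup{CF}[3n^2+3n+1,-n^3(2n-1)]=8/\pi^2$, we have $H(0,0;-\tfrac12)=\pi^2/8$. Substituting gives $H(2,0;-\tfrac12)=\tfrac{\pi^2/4-1}{4}=\tfrac{\pi^2-4}{16}$, and taking the reciprocal yields $\textup{CF}[3n^2+7n+3,-n^2(n+2)(2n-1)]=\tfrac{16}{\pi^2-4}$, as desired. This is the exact $\pi^2$-analogue of the way Corollary~\ref{c5} was deduced from Corollary~\ref{c1}.

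As an independent check that avoids Section~\ref{sec3}, I could instead invoke Theorem~\ref{t2.1} directly, since $\Re(2\gamma-\alpha-\beta)=-3<1$ holds; this gives $H(2,0;-\tfrac12)=\tfrac14\sum_{n\ge0}\tfrac{(1/2)_n(1/2)_n}{(3/2)_{n+1}(1/2)_{n+1}}$, and converting the Pochhammer symbols to factorials collapses the general term to $\tfrac{1}{(2n+1)^2(2n+3)}$. The partial-fraction split $\tfrac{1}{(2n+1)^2(2n+3)}=\tfrac{1}{2(2n+1)^2}-\tfrac{1}{4(2n+1)}+\tfrac{1}{4(2n+3)}$ isolates a telescoping part summing to $-\tfrac14$ together with $\tfrac12\sum_{n\ge0}(2n+1)^{-2}=\tfrac{\pi^2}{16}$, again producing $\tfrac{\pi^2-4}{16}$. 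In neither route is there a serious obstacle; the only point requiring genuine care is verifying the non-degeneracy conditions (avoidance of $\mathbb{Z}_{\le0}$) so that the half-integer value $\gamma=-\tfrac12$ is admissible, after which the computation is routine.
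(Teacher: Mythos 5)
You take exactly the paper's route for the main argument: recognize the continued fraction as $1/H(2,0;-\tfrac12)$, apply the first recurrence of Theorem~\ref{t2.2} (with coefficients $4$ and $2$) to get $H(2,0;-\tfrac12)=\tfrac{2H(0,0;-\tfrac12)-1}{4}$, and substitute $H(0,0;-\tfrac12)=\pi^2/8$ from Corollary~\ref{c23}, which is precisely the paper's proof. Your supplementary direct check via Theorem~\ref{t2.1}, collapsing the series to $\sum_{n\ge 0}\frac{1}{(2n+1)^2(2n+3)}=\frac{\pi^2}{16}-\frac14$ by partial fractions, is also correct and is an extra verification the paper does not carry out for this corollary.
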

\begin{cor}\label{c26}
$$\frac{32}{\pi^2}=\textup{CF}[3n^2+7n+3,-n^2(n+2)(2n-3)]$$
\end{cor}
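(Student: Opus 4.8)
The plan is to recognize the continued fraction as $1/H(2,0;-3/2)$ and to reduce it to the already-established Corollary~{\upshape\ref{c25}} by a single application of Theorem~{\upshape\ref{t2.2}}, exactly as in the proofs of the earlier corollaries. First I would read off the parameters: writing $a_n=3n^2+7n+3$ in the form $\Delta\bigl(n(n+\alpha)(n+\beta)\bigr)$ forces $\alpha+\beta=2$ and $\alpha\beta=0$, hence $\{\alpha,\beta\}=\{2,0\}$; matching $b_n=-n^2(n+2)(2n-3)$ against $-2n(n+\alpha)(n+\beta)(n+\gamma)$ then gives $2(n+\gamma)=2n-3$, i.e.\ $\gamma=-3/2$. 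Therefore
$$\textup{CF}\left[3n^2+7n+3,-n^2(n+2)(2n-3)\right]=\frac{1}{H(2,0;-3/2)}.$$
Although the hypotheses of Theorem~{\upshape\ref{t2.1}} happen to be satisfied here (indeed $\Re(2\gamma-\alpha-\beta)=-5<1$), evaluating the resulting series directly is less convenient than exploiting the functional equation.

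Next I would apply the second (the $\gamma$-)functional equation of Theorem~{\upshape\ref{t2.2}} at $(\alpha,\beta,\gamma)=(2,0,-1/2)$, first verifying the non-integrality hypotheses required to use it: $\frac{\alpha+1}{2}=\frac{3}{2}$, $\frac{\beta+1}{2}=\frac{1}{2}$, $\gamma+1=\frac{1}{2}$ and $\frac{\alpha+\beta+1}{2}-\gamma=2$ all lie outside $\mathbb{Z}_{\leq0}$. By Lemma~{\upshape\ref{l2.3}} both $H(2,0;-1/2)$ and $H(2,0;-3/2)$ are then defined and regular, so the identity of Theorem~{\upshape\ref{t2.2}} is valid at this point. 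Substituting these parameters yields
$$4H(2,0;-1/2)-8H(2,0;-3/2)+1=0,$$
so that $H(2,0;-3/2)=\tfrac{1}{8}\bigl(4H(2,0;-1/2)+1\bigr)$.

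Finally I would substitute the value $H(2,0;-1/2)=\frac{\pi^2-4}{16}$ coming from Corollary~{\upshape\ref{c25}}, obtaining
$$H(2,0;-3/2)=\frac{1}{8}\left(4\cdot\frac{\pi^2-4}{16}+1\right)=\frac{\pi^2}{32},$$
whence $\textup{CF}=32/\pi^2$, as claimed. Equivalently one may start from Corollary~{\upshape\ref{c24}}, namely $H(0,0;-3/2)=\frac{\pi^2+4}{16}$, and use the first functional equation of Theorem~{\upshape\ref{t2.2}}, which gives $8H(2,0;-3/2)-4H(0,0;-3/2)+1=0$ and the same value. I do not expect a genuine obstacle: once the parameters are identified the computation is purely mechanical. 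The one point demanding care is the bookkeeping of the non-integrality conditions, which must be checked at the half-integer $\gamma=-1/2$ in order to invoke Theorem~{\upshape\ref{t2.2}} legitimately, since that theorem is proved by the identity theorem and therefore rests on the regularity of $H$ guaranteed by Lemma~{\upshape\ref{l2.3}}.
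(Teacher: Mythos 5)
Your proposal is correct and follows essentially the same route as the paper: both identify the continued fraction as $1/H\left(2,0;-\frac{3}{2}\right)$, apply the second functional equation of Theorem~{\upshape\ref{t2.2}} at $\gamma=-\frac{1}{2}$ to get $H\left(2,0;-\frac{3}{2}\right)=\frac{4H\left(2,0;-\frac{1}{2}\right)+1}{8}$, and substitute the value from Corollary~{\upshape\ref{c25}}. Your explicit verification of the parameter matching and the non-integrality hypotheses, and the alternative derivation from Corollary~{\upshape\ref{c24}}, are sound additions but do not change the argument.
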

\begin{cor}\label{c27}
$$\frac{16}{-8+\pi^2}=\textup{CF}[3n^2+11n+9,-n(n+2)^2(2n-1)]$$
\end{cor}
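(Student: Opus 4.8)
The plan is to recognize this continued fraction as a special value of $H$ and then reduce it to an already-proved corollary via Theorem~\ref{t2.2}. First I would match the data against the template of Theorem~\ref{t2.1}. Writing $a_n=\Delta\bigl(n(n+\alpha)(n+\beta)\bigr)=3n^2+\bigl(3+2(\alpha+\beta)\bigr)n+\bigl(1+(\alpha+\beta)+\alpha\beta\bigr)$ and comparing with $3n^2+11n+9$ forces $\alpha+\beta=4$ and $\alpha\beta=4$, hence $\alpha=\beta=2$. Factoring $b_n=-n(n+2)^2(2n-1)=-2n(n+2)^2\bigl(n-\tfrac12\bigr)$ and comparing with $-2n(n+\alpha)(n+\beta)(n+\gamma)$ gives $\gamma=-\tfrac12$. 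Since $\tfrac{\alpha+1}{2}=\tfrac{\beta+1}{2}=\tfrac32$, $\gamma+1=\tfrac12$ and $\tfrac{\alpha+\beta+1}{2}-\gamma=3$ all avoid $\mathbb{Z}_{\le0}$, the hypotheses of Theorems~\ref{t2.1} and~\ref{t2.2} are met, and the corollary is equivalent to $H(2,2;-\tfrac12)=\tfrac{\pi^2-8}{16}$.

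Then I would apply the first identity of Theorem~\ref{t2.2} at $(\alpha,\beta,\gamma)=(2,2,-\tfrac12)$. Its coefficients evaluate to $\alpha(\alpha-2\gamma-1)=4$ and $(\alpha-1)(\alpha+\beta-2\gamma-1)=4$, so the identity collapses to
$$4\,H\bigl(2,2;-\tfrac12\bigr)-4\,H\bigl(0,2;-\tfrac12\bigr)+1=0.$$
Because $a_n$ and $b_n$ are symmetric in $\alpha$ and $\beta$, $H$ is symmetric in its first two arguments, so $H(0,2;-\tfrac12)=H(2,0;-\tfrac12)$; the reciprocal of the latter is exactly Corollary~\ref{c25}, giving $H(2,0;-\tfrac12)=\tfrac{-4+\pi^2}{16}$. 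Substituting yields $H(2,2;-\tfrac12)=\tfrac{-4+\pi^2}{16}-\tfrac14=\tfrac{\pi^2-8}{16}$, and inverting gives $\textup{CF}=\tfrac{16}{-8+\pi^2}$ as claimed.

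A self-contained alternative avoids Corollary~\ref{c25} by evaluating the series of Theorem~\ref{t2.1} directly. Using $(\tfrac32)_{n+1}=2\bigl(n+\tfrac12\bigr)\bigl(n+\tfrac32\bigr)(\tfrac12)_n$, the summand collapses to
$$\frac{(\tfrac12)_n(\tfrac12)_n}{(\tfrac32)_{n+1}(\tfrac32)_{n+1}}=\frac{1}{4\bigl(n+\tfrac12\bigr)^2\bigl(n+\tfrac32\bigr)^2}.$$
A partial-fraction split $\tfrac{1}{a^2(a+1)^2}=\tfrac{1}{a^2}+\tfrac{1}{(a+1)^2}-2\bigl(\tfrac1a-\tfrac1{a+1}\bigr)$ with $a=n+\tfrac12$, combined with $\sum_{n\ge0}(2n+1)^{-2}=\tfrac{\pi^2}{8}$ and a telescoping sum, gives $\sum_{n\ge0}\tfrac{1}{4(n+1/2)^2(n+3/2)^2}=\tfrac{\pi^2-8}{4}$, whence $\textup{CF}=4\bigl(\tfrac{\pi^2-8}{4}\bigr)^{-1}=\tfrac{16}{-8+\pi^2}$. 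The only genuine obstacle in either route is bookkeeping: pinning down the parameters $(\alpha,\beta,\gamma)=(2,2,-\tfrac12)$ and, for the direct route, carrying out the partial-fraction and telescoping evaluation correctly; once Corollary~\ref{c25} is in hand the recursion route is a one-line computation.
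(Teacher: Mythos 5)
Your primary route is exactly the paper's proof: the paper likewise reads off $(\alpha,\beta,\gamma)=(2,2,-\tfrac12)$, applies the first identity of Theorem~\ref{t2.2} (with the symmetry $H(0,2;\gamma)=H(2,0;\gamma)$ left implicit), and substitutes Corollary~\ref{c25} to get $H\bigl(2,2;-\tfrac12\bigr)=\tfrac{4H(2,0;-1/2)-1}{4}=\tfrac{\pi^2-8}{16}$; your coefficient computations ($4$ and $4$) and the final inversion match it line for line. Your self-contained alternative, however, is a genuinely different route that the paper does not take for this corollary: the paper evaluates the series of Theorem~\ref{t2.1} directly only for Corollaries~\ref{c23}, \ref{c1} and \ref{c31}, and reaches Corollary~\ref{c27} through the chain \ref{c23} $\to$ \ref{c25} $\to$ \ref{c27}; it even remarks that the ``direct'' proofs of Corollaries~\ref{c24}, \ref{c26}, \ref{c27}, \ref{c30} by the Section~\ref{sec3} method are more complicated. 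Your direct evaluation is legitimate here because $\Re(2\gamma-\alpha-\beta)=-5<1$, so Theorem~\ref{t2.1} applies, and the computation checks out: $\bigl(\tfrac32\bigr)_{n+1}=2\bigl(n+\tfrac12\bigr)\bigl(n+\tfrac32\bigr)\bigl(\tfrac12\bigr)_n$ collapses the summand to $\tfrac{1}{4(n+1/2)^2(n+3/2)^2}$, and the partial-fraction/telescoping evaluation gives $\tfrac{\pi^2}{2}+\bigl(\tfrac{\pi^2}{2}-4\bigr)-2\cdot2=\pi^2-8$, hence $\textup{CF}=\tfrac{16}{\pi^2-8}$. What this buys is independence: it verifies the corollary without relying on the correctness of Corollaries~\ref{c23} and~\ref{c25}, at the cost of an explicit series evaluation instead of a one-line recursion.
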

\begin{cor}\label{c28}
$$\frac{16}{12-\pi^2}=\textup{CF}[3n^2+11n+9,-n(n+2)^2(2n+1)]$$
\end{cor}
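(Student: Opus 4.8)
The plan is to recognize the continued fraction in Corollary~\ref{c28} as $1/H\bigl(2,2;\tfrac12\bigr)$ and to obtain its value from the already-established Corollary~\ref{c27} by a single application of the second recurrence in Theorem~\ref{t2.2}.

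First I would match the continued fraction to the normal form of Theorem~\ref{t2.1}. For the denominator sequence $a(n)=3n^2+11n+9$, I would use the expansion $\Delta\bigl(n(n+\alpha)(n+\beta)\bigr)=3n^2+\bigl(3+2(\alpha+\beta)\bigr)n+\bigl(1+(\alpha+\beta)+\alpha\beta\bigr)$; comparing coefficients forces $\alpha+\beta=4$ and $\alpha\beta=4$, hence $\alpha=\beta=2$. For the numerator I would factor $-n(n+2)^2(2n+1)=-2n(n+2)^2\bigl(n+\tfrac12\bigr)$, which is exactly $-2n(n+\alpha)(n+\beta)(n+\gamma)$ with $\alpha=\beta=2$ and $\gamma=\tfrac12$. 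Thus $\textup{CF}\left[3n^2+11n+9,-n(n+2)^2(2n+1)\right]=1/H\bigl(2,2;\tfrac12\bigr)$.

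Next I would check that Theorem~\ref{t2.2} applies at $(\alpha,\beta,\gamma)=\bigl(2,2,\tfrac12\bigr)$: the excluded quantities $\tfrac{\alpha+1}{2}=\tfrac32$, $\tfrac{\beta+1}{2}=\tfrac32$, $\gamma+1=\tfrac32$ and $\tfrac{\alpha+\beta+1}{2}-\gamma=2$ all avoid $\mathbb{Z}_{\le0}$. Substituting these values into the second identity of Theorem~\ref{t2.2} collapses its coefficients to $2\gamma(2\gamma-\alpha-\beta-1)=-4$ and $(2\gamma-\alpha-1)(2\gamma-\beta-1)=4$, so that $-4\,H\bigl(2,2;\tfrac12\bigr)-4\,H\bigl(2,2;-\tfrac12\bigr)+1=0$, i.e.\ $H\bigl(2,2;\tfrac12\bigr)=\tfrac14\bigl(1-4H\bigl(2,2;-\tfrac12\bigr)\bigr)$. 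Reading the value $H\bigl(2,2;-\tfrac12\bigr)=\tfrac{\pi^2-8}{16}$ off Corollary~\ref{c27} then yields $H\bigl(2,2;\tfrac12\bigr)=\tfrac{12-\pi^2}{16}$, and taking reciprocals gives the claimed $\tfrac{16}{12-\pi^2}$.

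Everything after the identification is routine arithmetic, so the only step demanding genuine attention is the parameter matching at the outset. In particular, the point to get right is that the odd factor $2n+1$ must be absorbed as $2\bigl(n+\tfrac12\bigr)$, producing the half-integer $\gamma=\tfrac12$; this is precisely what places the corollary in the $\pi^2$ family rather than forcing an integer $\gamma$ that would return it to the Catalan-constant corollaries. Once $\gamma=\tfrac12$ is correctly pinned down and the neighbouring value at $\gamma=-\tfrac12$ is supplied by Corollary~\ref{c27}, the recurrence finishes the proof.
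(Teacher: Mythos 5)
Your proposal is correct and follows exactly the paper's own route: identify the continued fraction as $1/H\bigl(2,2;\tfrac12\bigr)$, apply the second recurrence of Theorem~\ref{t2.2} at $(\alpha,\beta,\gamma)=\bigl(2,2,\tfrac12\bigr)$, and feed in $H\bigl(2,2;-\tfrac12\bigr)=\tfrac{\pi^2-8}{16}$ from Corollary~\ref{c27}. The only difference is that you spell out the parameter matching and the hypothesis checks that the paper leaves implicit, which is a harmless (indeed welcome) elaboration.
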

\begin{cor}\label{c29}
$$\frac{32}{32-3\pi^2}=\textup{CF}[3n^2+15n+15,-n(n+2)(n+4)(2n+1)]$$
\end{cor}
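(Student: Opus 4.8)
The plan is to recognize the given continued fraction as a single instance of the family in Theorem~\ref{t2.1}, and then evaluate the corresponding value of $H$ by one application of the first recurrence of Theorem~\ref{t2.2}, reducing it to the value already established in Corollary~\ref{c28}. First I would match the partial denominators. Writing $a(n)=\Delta\bigl(n(n+\alpha)(n+\beta)\bigr)=3n^2+(3+2\alpha+2\beta)n+(1+\alpha+\beta+\alpha\beta)$ and comparing with $3n^2+15n+15$ forces $\alpha+\beta=6$ and $\alpha\beta=8$, so $\{\alpha,\beta\}=\{2,4\}$. Next I would match the numerators: Theorem~\ref{t2.1} has $b(n)=-2n(n+\alpha)(n+\beta)(n+\gamma)$, and since $-n(n+2)(n+4)(2n+1)=-2n(n+2)(n+4)\bigl(n+\tfrac12\bigr)$, one reads off $\gamma=\tfrac12$. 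Hence the continued fraction equals $1/H(4,2;\tfrac12)$. Before proceeding I would verify the hypotheses of Theorems~\ref{t2.1} and~\ref{t2.2}: here $\tfrac{\alpha+1}{2}=\tfrac52$, $\tfrac{\beta+1}{2}=\tfrac32$, $\gamma+1=\tfrac32$, and $\tfrac{\alpha+\beta+1}{2}-\gamma=3$, none of which lie in $\mathbb{Z}_{\leq0}$, so both theorems apply.

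Then I would apply the first recurrence of Theorem~\ref{t2.2} at $(\alpha,\beta,\gamma)=(4,2,\tfrac12)$, which relates $H(4,2;\tfrac12)$ to $H(2,2;\tfrac12)$. The coefficients evaluate to $\alpha(\alpha-2\gamma-1)=4\cdot2=8$ and $(\alpha-1)(\alpha+\beta-2\gamma-1)=3\cdot4=12$, giving $8H(4,2;\tfrac12)-12H(2,2;\tfrac12)+1=0$, that is $H(4,2;\tfrac12)=\tfrac{12H(2,2;\tfrac12)-1}{8}$. Corollary~\ref{c28} supplies $H(2,2;\tfrac12)=\tfrac{12-\pi^2}{16}$ as the reciprocal of $\tfrac{16}{12-\pi^2}$. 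Substituting and simplifying yields $H(4,2;\tfrac12)=\tfrac{32-3\pi^2}{32}$, so the continued fraction equals $\tfrac{32}{32-3\pi^2}$, as claimed.

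The arithmetic is routine, so there is no serious obstacle; the only step demanding genuine care is the parameter identification, in particular extracting the half-integer value $\gamma=\tfrac12$ from the factor $(2n+1)$ rather than treating it as an integer shift as in the Catalan-constant corollaries. I would also make sure that Corollary~\ref{c28}, on which this argument depends, is itself in hand: its proof proceeds along the analogous chain back to the base evaluation $H(0,0;-\tfrac12)=\tfrac{\pi^2}{8}$ underlying Corollary~\ref{c23}, combining the $\alpha$- and $\gamma$-recurrences of Theorem~\ref{t2.2} with the evident symmetry $H(\alpha,\beta;\gamma)=H(\beta,\alpha;\gamma)$ of the continued fraction in $\alpha$ and $\beta$.
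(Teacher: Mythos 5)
Your proposal is correct and follows essentially the same route as the paper: identify the continued fraction as $1/H\bigl(4,2;\tfrac12\bigr)$, apply the first recurrence of Theorem~\ref{t2.2} to get $H\bigl(4,2;\tfrac12\bigr)=\tfrac{12H(2,2;\frac12)-1}{8}$, and substitute $H\bigl(2,2;\tfrac12\bigr)=\tfrac{12-\pi^2}{16}$ from Corollary~\ref{c28}. Your extra care with the parameter matching (reading $\gamma=\tfrac12$ off the factor $2n+1$) and the explicit hypothesis checks are sound and merely make explicit what the paper leaves implicit.
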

\begin{cor}\label{c30}
$$\frac{16+3\pi^2}{16-\pi^2}=\textup{CF}[3n^2+9n+7,-(n+1)^3(2n-3)]$$
\end{cor}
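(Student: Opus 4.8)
The plan is to recognize Corollary~\ref{c30} as the specialization $\gamma=-\tfrac{3}{2}$ of the one-parameter family already analyzed in the proof of Corollary~\ref{c9}. Indeed, rewriting the inner polynomial as
$$-(n+1)^3(2n-3)=-2(n+1)^3\!\left(n-\tfrac{3}{2}\right),$$
we see that $\textup{CF}[3n^2+9n+7,-(n+1)^3(2n-3)]=\textup{CF}[3n^2+9n+7,-2(n+1)^3(n+\gamma)]$ with $\gamma=-\tfrac{3}{2}$. The proof of Corollary~\ref{c9} established, for general $\gamma$, the closed form
$$\textup{CF}[3n^2+9n+7,-2(n+1)^3(n+\gamma)]=\frac{2\gamma(2\gamma-1)H(0,0;\gamma)+1}{(2\gamma-1)H(0,0;\gamma)-2\gamma+2},$$
obtained from the index shift $n\mapsto n+1$ (which turns $3n^2+3n+1$ into $3n^2+9n+7$ and $-2n^3(n+\gamma-1)$ into $-2(n+1)^3(n+\gamma)$) together with the second recurrence of Theorem~\ref{t2.2}. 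So all that remains is to evaluate $H(0,0;-\tfrac{3}{2})$ and substitute.

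To compute $H(0,0;-\tfrac{3}{2})$, I would invoke Theorem~\ref{t2.1} directly: its hypotheses hold here, since $\Re(2\gamma-\alpha-\beta)=-3<1$ and $\tfrac{\alpha+1}{2}=\tfrac{\beta+1}{2}=\tfrac12$, $\gamma+1=-\tfrac12\notin\mathbb{Z}_{\le0}$. This gives
$$H\!\left(0,0;-\tfrac{3}{2}\right)=\frac{1}{4}\sum_{n=0}^\infty\frac{\bigl(\tfrac{1}{2}\bigr)_n\bigl(-\tfrac{1}{2}\bigr)_n}{\bigl(\tfrac{1}{2}\bigr)_{n+1}^{2}}.$$
Using $\bigl(\tfrac12\bigr)_{n+1}=\bigl(\tfrac12\bigr)_n\bigl(n+\tfrac12\bigr)$ and the telescoping ratio $\bigl(-\tfrac12\bigr)_n/\bigl(\tfrac12\bigr)_n=-\tfrac{1}{2n-1}$ for $n\ge1$, this collapses to $1-\sum_{n\ge1}\frac{1}{(2n-1)(2n+1)^2}$; a partial-fraction split together with $\sum_{k\ge0}(2k+1)^{-2}=\tfrac{\pi^2}{8}$ then yields
$$H\!\left(0,0;-\tfrac{3}{2}\right)=\frac{4+\pi^2}{16}.$$
Equivalently, this value is exactly the content of Corollary~\ref{c24} (since $-n^3(2n-3)=-2n^3(n-\tfrac32)$), and it also follows from $H(0,0;-\tfrac12)=\tfrac{\pi^2}{8}$ (Corollary~\ref{c23}) through the second recurrence of Theorem~\ref{t2.2} evaluated at $\gamma=-\tfrac12$.

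Finally I would substitute $\gamma=-\tfrac{3}{2}$ and $H(0,0;-\tfrac{3}{2})=\tfrac{4+\pi^2}{16}$ into the closed form. With $2\gamma=-3$, $2\gamma-1=-4$, $2\gamma(2\gamma-1)=12$ and $-2\gamma+2=5$, the numerator becomes $12\cdot\tfrac{4+\pi^2}{16}+1=\tfrac{64+12\pi^2}{16}$ and the denominator $-4\cdot\tfrac{4+\pi^2}{16}+5=\tfrac{16-\pi^2}{4}$, whence the quotient simplifies to $\tfrac{16+3\pi^2}{16-\pi^2}$, as claimed. The computation is essentially routine; the only point requiring care is the legitimacy of evaluating the family at the half-integer $\gamma=-\tfrac{3}{2}$, which is guaranteed because all parameters entering Theorems~\ref{t2.1} and~\ref{t2.2} avoid $\mathbb{Z}_{\le0}$, so the relevant instances of $H$ are regular (Lemma~\ref{l2.3}) and the identities apply verbatim.
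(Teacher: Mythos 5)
Your proposal is correct and takes essentially the same route as the paper: the paper's proof combines the second recurrence of Theorem~\ref{t2.2} (applied to the value $H\left(0,0;-\tfrac{3}{2}\right)=\tfrac{4+\pi^2}{16}$ from Corollary~\ref{c24}, yielding $H\left(0,0;-\tfrac{5}{2}\right)$) with the index-shift identity $\textup{CF}[3n^2+9n+7,-(n+1)^3(2n-3)]=3/\left(\textup{CF}[3n^2+3n+1,-n^3(2n-5)]-1\right)$, and these are precisely the two ingredients already packaged inside the general-$\gamma$ formula from the proof of Corollary~\ref{c9} that you specialize at $\gamma=-\tfrac{3}{2}$. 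Your direct telescoping evaluation of $H\left(0,0;-\tfrac{3}{2}\right)$ via Theorem~\ref{t2.1} is a harmless alternative to citing Corollary~\ref{c24}, which you also note, and your parameter checks (all of $\gamma+1$, $\tfrac{\alpha+\beta+1}{2}-\gamma$ avoiding $\mathbb{Z}_{\leq0}$) are exactly what legitimizes the substitution.
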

\begin{proof}[Proof of Corollary~{\upshape\ref{c23}}]
    By substituting $\alpha=\beta=0$ and $\gamma=-\frac{1}{2}$ for Theorem~{\upshape\ref{t2.1}}, we have
    \begin{align*}
        \textup{CF}[3n^2+3n+1,-n^3(2n-1)]&=\frac{1}{H\left(0,0;-\frac{1}{2}\right)}=4\left\{\sum_{n=0}^\infty\frac{\left(\frac{1}{2}\right)_n\left(\frac{1}{2}\right)_n}{\bigl(\frac{1}{2}\bigr)_{n+1}\bigl(\frac{1}{2}\bigr)_{n+1}}\right\}^{-1}\\
        &=\left\{\sum_{n=0}^\infty\frac{1}{(2n+1)^2}\right\}^{-1}\\
        &=\frac{8}{\pi^2}.
    \end{align*}
\end{proof}
\begin{proof}[Proof of Corollary~{\upshape\ref{c24}}]
    By Theorem~{\upshape\ref{t2.2}} and Corollary~{\upshape\ref{c23}}, we have
    $$\textup{CF}[3n^2+3n+1,-n^3(2n-3)]=\frac{1}{H\left(0,0;-\frac{3}{2}\right)}=\frac{1}{\frac{2H\left(0,0;-\frac{1}{2}\right)+1}{4}}=\frac{16}{4+\pi^2}.$$
\end{proof}
\begin{proof}[Proof of Corollary~{\upshape\ref{c25}}]
    By Theorem~{\upshape\ref{t2.2}} and Corollary~{\upshape\ref{c23}}, we have
    $$\textup{CF}[3n^2+7n+3,-n^2(n+2)(2n-1)]=\frac{1}{H\left(2,0;-\frac{1}{2}\right)}=\frac{1}{\frac{2H\left(0,0;-\frac{1}{2}\right)-1}{4}}=\frac{16}{-4+\pi^2}.$$
\end{proof}
\begin{proof}[Proof of Corollary~{\upshape\ref{c26}}]
    By Theorem~{\upshape\ref{t2.2}} and Corollary~{\upshape\ref{c25}}, we have
    $$\textup{CF}[3n^2+7n+3,-n^2(n+2)(2n-3)]=\frac{1}{H\left(2,0;-\frac{3}{2}\right)}=\frac{1}{\frac{4H\left(2,0;-\frac{1}{2}\right)+1}{8}}=\frac{32}{\pi^2}.$$
\end{proof}
\begin{proof}[Proof of Corollary~{\upshape\ref{c27}}]
    By Theorem~{\upshape\ref{t2.2}} and Corollary~{\upshape\ref{c25}}, we have
    $$\textup{CF}[3n^2+11n+9,-n(n+2)^2(2n-1)]=\frac{1}{H\left(2,2;-\frac{1}{2}\right)}=\frac{1}{\frac{4H\left(2,0;-\frac{1}{2}\right)-1}{4}}=\frac{16}{-8+\pi^2}.$$
\end{proof}
\begin{proof}[Proof of Corollary~{\upshape\ref{c28}}]
    By Theorem~{\upshape\ref{t2.2}} and Corollary~{\upshape\ref{c27}}, we have
    $$\textup{CF}[3n^2+11n+9,-n(n+2)^2(2n+1)]=\frac{1}{H\left(2,2;\frac{1}{2}\right)}=\frac{1}{\frac{4H\left(2,2;-\frac{1}{2}\right)-1}{-4}}=\frac{16}{12-\pi^2}.$$
\end{proof}
\begin{proof}[Proof of Corollary~{\upshape\ref{c29}}]
    By Theorem~{\upshape\ref{t2.2}} and Corollary~{\upshape\ref{c28}}, we have
    $$\textup{CF}[3n^2+15n+15,-n(n+2)(n+4)(2n+1)]=\frac{1}{H\left(4,2;\frac{1}{2}\right)}=\frac{1}{\frac{12H\left(2,2;\frac{1}{2}\right)-1}{8}}=\frac{32}{32-3\pi^2}.$$
\end{proof}
\begin{proof}[Proof of Corollary~{\upshape\ref{c30}}]
    By Theorem~{\upshape\ref{t2.2}} and Corollary~{\upshape\ref{c24}}, we have
    $$\textup{CF}[3n^2+3n+1,-n^3(2n-5)]=\frac{1}{H\left(0,0;-\frac{5}{2}\right)}=\frac{1}{\frac{12H\left(0,0;-\frac{3}{2}\right)+1}{16}}=\frac{64}{16+3\pi^2}.$$
    Thus,
    $$\textup{CF}[3n^2+9n+7,-(n+1)^3(2n-3)]=\frac{3}{\textup{CF}[3n^2+3n+1,-n^3(2n-5)]-1}=\frac{16+3\pi^2}{16-\pi^2}.$$
\end{proof}

The following is a corollary involving $\log(2)$.
\begin{cor}\label{c31}
$$\frac{1}{1-\log(2)}=\textup{CF}[3n^2+7n+4,-2n^2(n+1)^2]$$
\end{cor}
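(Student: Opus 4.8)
The plan is to recognize $\textup{CF}[3n^2+7n+4,-2n^2(n+1)^2]$ as the reciprocal of a special value of $H(\alpha,\beta;\gamma)$ and to apply Theorem~\ref{t2.1} directly. First I would match the coefficient polynomials against the template of that theorem. Demanding $a(n)=3n^2+7n+4=\Delta\bigl(n(n+\alpha)(n+\beta)\bigr)$ forces $\alpha+\beta=2$ and $\alpha\beta=1$, hence $\alpha=\beta=1$; indeed a direct expansion confirms $\Delta\bigl(n(n+1)^2\bigr)=3n^2+7n+4$. With these values the companion polynomial $-2n(n+\alpha)(n+\beta)(n+\gamma)=-2n(n+1)^2(n+\gamma)$ coincides with $-2n^2(n+1)^2$ exactly when $\gamma=0$. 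Thus the continued fraction equals $1/H(1,1;0)$.

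Next I would check the hypotheses of Theorem~\ref{t2.1}: the convergence condition $\Re(2\gamma-\alpha-\beta)=\Re(-2)<1$ holds, and $\frac{\alpha+1}{2}=\frac{\beta+1}{2}=\gamma+1=1\notin\mathbb{Z}_{\leq0}$. The theorem then yields
\begin{equation*}
\textup{CF}[3n^2+7n+4,-2n^2(n+1)^2]=4\left\{\sum_{n=0}^\infty\frac{\bigl(\tfrac12\bigr)_n(1)_n}{(1)_{n+1}(1)_{n+1}}\right\}^{-1}.
\end{equation*}
Using $(1)_{n+1}=(n+1)!$ and $\bigl(\tfrac12\bigr)_n=\binom{2n}{n}\,n!/4^n$, the general term collapses to $\binom{2n}{n}/\bigl\{4^n(n+1)^2\bigr\}$, so the series becomes $\sum_{n\geq0}C_n/\bigl\{4^n(n+1)\bigr\}$, where $C_n=\binom{2n}{n}/(n+1)$ is the $n$-th Catalan number.

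The crux — and the only nonroutine step — is evaluating this series in closed form. I would invoke the Catalan generating function $g(x)=\sum_{n\geq0}C_nx^n=\bigl(1-\sqrt{1-4x}\bigr)/(2x)$ and integrate term by term, giving $\sum_{n\geq0}\frac{C_n}{n+1}x^{n+1}=\int_0^x g(t)\,dt$, so that the required series equals $\frac{1}{x}\int_0^x g(t)\,dt$ evaluated at $x=\tfrac14$. The substitution $u=\sqrt{1-4t}$ reduces $\int_0^{1/4}g(t)\,dt$ to $\int_0^1\frac{u}{1+u}\,du=\bigl[u-\ln(1+u)\bigr]_0^1=1-\log(2)$, whence the series equals $4\bigl(1-\log 2\bigr)$. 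Substituting back gives
\begin{equation*}
\textup{CF}[3n^2+7n+4,-2n^2(n+1)^2]=\frac{4}{4\bigl(1-\log 2\bigr)}=\frac{1}{1-\log(2)}.
\end{equation*}
The main obstacle is therefore not the difference-equation machinery, which is entirely absorbed into Theorem~\ref{t2.1}, but the recognition and evaluation of this central-binomial (Catalan) series; alternatively one could simply cite a known closed form for $\sum_{n\geq0}\binom{2n}{n}/\bigl\{4^n(n+1)^2\bigr\}$, in the spirit of the evaluation used for Catalan's constant in Corollary~\ref{c1}.
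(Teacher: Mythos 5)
Your reduction is identical to the paper's: both match $3n^2+7n+4=\Delta\bigl(n(n+1)^2\bigr)$ to get $\alpha=\beta=1$, $\gamma=0$, verify the hypotheses of Theorem~\ref{t2.1}, and arrive at the same series $\sum_{n\geq0}\bigl(\tfrac12\bigr)_n(1)_n/\bigl\{(1)_{n+1}(1)_{n+1}\bigr\}$. Where you genuinely diverge is the evaluation of that series, which is the only substantive content of this corollary's proof. The paper shifts the index to write the sum as $\sum_{n\geq1}\frac{(2n)!}{4^n(n!)^2}\frac{1}{2n(2n-1)}$, splits it by partial fractions, evaluates the $\frac{1}{2n}$ piece as $\log(2)$ by differentiating $\log\bigl(\frac{1+\sqrt{1+z^2}}{2}\bigr)$ and substituting $z=i$ into the resulting power series on the boundary of its disc of convergence, and evaluates the $\frac{1}{2n-1}$ piece as $1$ by telescoping. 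You instead recognize the terms as $C_n/\bigl\{4^n(n+1)\bigr\}$ with $C_n$ the Catalan numbers and compute $4\int_0^{1/4}\frac{1-\sqrt{1-4t}}{2t}\,dt$ via $u=\sqrt{1-4t}$, getting $4(1-\log 2)$ in one stroke. Your route is arguably cleaner: it needs only one evaluation rather than two, and the term-by-term integration on $[0,\tfrac14]$ is justified by uniform convergence (the terms decay like $n^{-5/2}$), whereas the paper's imaginary substitution $z=i$ sits on the boundary $|z|=1$ and tacitly relies on an Abel-type argument. The one point worth making explicit in a final write-up is precisely that justification of integrating the Catalan generating function up to the boundary point $x=\tfrac14$ of its disc of convergence; with that sentence added, your proof is complete and correct.
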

\begin{proof}
    By substituting $\alpha=\beta=1$ and $\gamma=0$ for Theorem~{\upshape\ref{t2.1}}, we have
    \begin{align*}
        \textup{CF}[3n^2+7n+4,-2n^2(n+1)^2]&=4\left\{\sum_{n=0}^\infty\frac{\left(\frac{1}{2}\right)_n(1)_n}{(1)_{n+1}(1)_{n+1}}\right\}^{-1}\\
        &=\left\{\sum_{n=1}^\infty\frac{(2n)!}{4^n(n!)^2}\frac{1}{2n(2n-1)}\right\}^{-1}.
    \end{align*}
    Here, from
    $$\vartheta_z \log\left(\frac{1+\sqrt{1+z^2}}{2}\right) = 1- \frac{1}{\sqrt{1+z^2}},$$
    we get
    $$\log\left(1+\sqrt{1+z^2}\right)=\log(2)-\sum_{n=1}^\infty\frac{(-1)^n(2n)!}{4^n(n!)^2}\frac{z^{2n}}{2n},\qquad|z|\leq1.$$
    Substituting $z=i$, we have
    $$\sum_{n=1}^\infty\frac{(2n)!}{4^n(n!)^2}\frac{1}{2n}=\log(2).$$
    In addition,
    \begin{align*}
        \sum_{n=1}^\infty\frac{(2n)!}{4^n(n!)^2}\frac{1}{2n-1}&=\sum_{n=0}^\infty\frac{(2n)!}{4^n(n!)^2}\frac{1}{2(n+1)}\\
        &=\sum_{n=0}^\infty\frac{(2n)!}{4^n(n!)^2}-\frac{(2n+2)!}{4^{n+1}((n+1)!)^2}\\
        &=1.
    \end{align*}
    Therefore,
    \begin{align*}
        \textup{CF}[3n^2+7n+4,-2n^2(n+1)^2]&=\left\{\sum_{n=1}^\infty\frac{(2n)!}{4^n(n!)^2}\frac{1}{2n-1}-\sum_{n=1}^\infty\frac{(2n)!}{4^n(n!)^2}\frac{1}{2n}\right\}^{-1}\\
        &=\frac{1}{1-\log(2)}.
    \end{align*}
\end{proof}

\section{Proofs of the conjectures for $\pi^2$, $\log(2)$ and special values of the zeta function by Petkov\v{s}ek's algorithm}\label{sec3}
With the special solution of (\upshape\ref{e1.2}), the continued fraction can be expressed as the following lemma. 

\begin{lem}\label{l3.1}Assume that a sequence $y_n$ satisfies (\upshape\ref{e1.2}). If $y_n\neq0$ for all $n\geq0$,
$$\textup{CF}[a_n,b_n]=\left(a_0-\frac{y_1}{y_0}\right)+\frac{1}{{y_0}^2}\left\{\sum_{k=0}^\infty\frac{(-1)^k}{y_k y_{k+1}}\prod_{i=1}^kb_i\right\}^{-1}.$$
\end{lem}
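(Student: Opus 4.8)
The plan is to start from Lemma~\ref{l1.1}, which identifies $\textup{CF}[a_n,b_n]$ with $\lim_{N\to\infty}A_N/B_N$, and then to rewrite this ratio entirely in terms of the prescribed solution $y_n$. The bridge is the elementary fact that any two solutions $u_n,v_n$ of (\ref{e1.2}) have a Casoratian $W_n=u_{n+1}v_n-u_nv_{n+1}$ satisfying the first-order recurrence $W_n=-b_nW_{n-1}$; substituting the recurrence for $u_{n+1}$ and $v_{n+1}$ gives $W_n=b_n(u_{n-1}v_n-u_nv_{n-1})=-b_nW_{n-1}$ in one line, whence
\[
u_{n+1}v_n-u_nv_{n+1}=(-1)^n\Bigl(\prod_{i=1}^n b_i\Bigr)(u_1v_0-u_0v_1).
\]
I would record this identity first, since every later step is an instance of it.

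Next I would apply the identity to the pair $y_n$ and $B_n$. Here $u_1v_0-u_0v_1=y_1B_0-y_0B_1=-y_0$, so it reads $y_{n+1}B_n-y_nB_{n+1}=(-1)^{n+1}y_0\prod_{i=1}^n b_i$. Dividing by $y_ny_{n+1}$, which is legitimate precisely because $y_n\neq0$ for all $n$, turns the left-hand side into the telescoping increment $\frac{B_n}{y_n}-\frac{B_{n+1}}{y_{n+1}}$. Summing from $n=0$ to $N-1$ and using $B_0=0$ yields
\[
\frac{B_N}{y_N}=y_0\sum_{n=0}^{N-1}\frac{(-1)^n}{y_ny_{n+1}}\prod_{i=1}^n b_i.
\]
This is the step I expect to carry the substance, as it is exactly where the series in the statement is manufactured.

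To remove $A_N$, I would use the relation $y_n=y_0A_n+(y_1-a_0y_0)B_n$: both sides solve (\ref{e1.2}) and agree at $n=0$ and $n=1$, hence coincide for all $n$. Solving for $A_N$ and dividing by $B_N$ gives $\frac{A_N}{B_N}=\frac{y_N}{y_0B_N}-\frac{y_1-a_0y_0}{y_0}$. Substituting the displayed expression for $B_N=y_0y_N\sum_{n=0}^{N-1}(\cdots)$ collapses $\frac{y_N}{y_0B_N}$ into $\bigl(y_0^2\sum_{n=0}^{N-1}(\cdots)\bigr)^{-1}$, while $-\frac{y_1-a_0y_0}{y_0}=a_0-\frac{y_1}{y_0}$. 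Letting $N\to\infty$ and invoking Lemma~\ref{l1.1} then produces the asserted formula with $\sum_{k=0}^\infty$ in place of the partial sum.

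The Casoratian recurrence and the initial-condition matching are routine; the only points demanding care are the hypothesis $y_n\neq0$, which is used at every division by $y_n$, and the convergence of both $A_N/B_N$ and the partial sums. These are in fact linked by the finite-$N$ identity, so that existence of the continued fraction forces existence of the series limit. The genuinely conceptual move—more an insight than an obstacle—is recognizing that the prescribed summand $\frac{(-1)^k}{y_ky_{k+1}}\prod_{i=1}^k b_i$ is exactly the telescoped increment of $B_n/y_n$.
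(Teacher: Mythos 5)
Your proposal is correct and follows essentially the same route as the paper: both rest on the Casoratian identity $u_{n+1}v_n-u_nv_{n+1}=(-1)^n\bigl(\prod_{i=1}^n b_i\bigr)(u_1v_0-u_0v_1)$ for two solutions of (\ref{e1.2}), followed by division by $y_ny_{n+1}$ and telescoping to express $B_N/y_N$ as the partial sum of the series in the statement. The only cosmetic difference is that the paper runs the same telescoping a second time for the pair $(A_n,y_n)$ to get $A_N$ explicitly, whereas you eliminate $A_N$ via the basis relation $y_n=y_0A_n+(y_1-a_0y_0)B_n$; the two devices are interchangeable and lead to the identical limit computation.
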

It is already known how to construct a general solution from a special solution in a second-order linear difference equation. Thus Lemma~{\upshape\ref{l3.1}} is easily proved by applying Lemma~{\upshape\ref{l1.1}}.

\begin{proof}Multiplying the both sides of (\upshape\ref{e1.2}) and the difference equation of $A_n$ in (\upshape\ref{e1.1}) by $A_n$ and $y_n$ respectively, we have 
\begin{eqnarray*}
\left\{\begin{array}{l}
\displaystyle A_ny_{n+1}=a_nA_ny_n+b_nA_ny_{n-1}, \\
\displaystyle A_{n+1}y_n=a_nA_ny_n+b_nA_{n-1}y_n.
\end{array}
\right.
\end{eqnarray*}
Subtracting the first equation by the second equation on both sides, we get
\begin{eqnarray*}
(A_{n+1}y_n-A_ny_{n+1})&=&-b_n(A_ny_{n-1}-A_{n-1}y_n)\\
&=&(A_1y_0-A_0y_1)\prod_{i=1}^n(-b_i).
\end{eqnarray*}
Thus,
$$\frac{A_{n+1}}{y_{n+1}}-\frac{A_n}{y_n}=(a_0y_0-y_1)\frac{\prod_{i=1}^n(-b_i)}{y_ny_{n+1}}.$$
Therefore we get
$$ A_n=y_n\left\{\frac{1}{y_0}+(a_0y_0-y_1)\sum_{k=0}^{n-1}\frac{\prod_{i=1}^k(-b_i)}{y_ky_{k+1}}\right\}.$$
By a similar argument, we have
$$B_n=y_0y_n\sum_{k=0}^{n-1}\frac{\prod_{i=1}^k(-b_i)}{y_ky_{k+1}}.$$
It follows from Lemma~{\upshape\ref{l1.1}} that
$$\text{CF}[a_n,b_n]=\lim_{n\to\infty}\frac{A_n}{B_n}=\left(a_0-\frac{y_1}{y_0}\right)+\frac{1}{{y_0}^2}\left\{\sum_{k=0}^\infty\frac{(-1)^k}{y_k y_{k+1}}\prod_{i=1}^kb_i\right\}^{-1}.$$
\end{proof}

A special solution of (\upshape\ref{e1.2}) can sometimes be easily found by applying Petkov\v{s}ek's algorithm, which uses the following specific representation of rational functions.
\begin{lem}\label{3.2}\textup{\cite[Theorem 5.3.1]{A=B}}
Let $f(n)$ be a non-zero rational function. Then there exist polynomials $p,q,r$ such that $p,r$ are monic and
$$f(n)=\frac{p(n)}{p(n-1)}\frac{q(n)}{r(n)},$$
where
\begin{itemize}
    \item[\textup{1.}] $\gcd(q(n),r(n+j))=1\qquad$ for all non-negative integer $j,$
    \item[\textup{2.}] $\gcd(p(n-1),q(n))=1$,
    \item[\textup{3.}] $\gcd(p(n),r(n))=1.$
\end{itemize}
\end{lem}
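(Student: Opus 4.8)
The plan is to give the existence half of the Gosper--Petkov\v{s}ek normal form constructively, producing the triple $(p,q,r)$ by an explicit factor-extraction procedure. First I would normalize: writing $f(n)=z\,a(n)/b(n)$ with $a,b$ monic and coprime and $z$ a nonzero constant, I set $p:=1$, $q:=z\,a$, $r:=b$. Then $p,r$ are monic, $q$ carries the constant, the identity $f=\frac{p(n)}{p(n-1)}\frac{q(n)}{r(n)}$ holds trivially, and $\gcd(q,r)=1$ already gives condition~1 in the case $j=0$.

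The engine is a telescoping identity. If, for some integer $h\ge1$, the polynomial $g(n):=\gcd(q(n),r(n+h))$ is nonconstant, then $g(n)\mid q(n)$ and $g(n-h)\mid r(n)$, and with $P(n):=\prod_{i=0}^{h-1}g(n-i)$ one has
$$\frac{g(n)}{g(n-h)}=\frac{\prod_{i=0}^{h-1}g(n-i)}{\prod_{i=0}^{h-1}g(n-1-i)}=\frac{P(n)}{P(n-1)}.$$
This lets me shift the aligned part into the $p$-slot: replace $q(n)\leftarrow q(n)/g(n)$, $r(n)\leftarrow r(n)/g(n-h)$, and $p(n)\leftarrow p(n)\,P(n)$. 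Each such step preserves the factorization $f=\frac{p(n)}{p(n-1)}\frac{q(n)}{r(n)}$ and keeps $p,r$ monic.

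Which shifts $h$ must be treated is governed by the resultant $R(h):=\operatorname{Res}_n\!\bigl(q(n),r(n+h)\bigr)$, a nonzero polynomial in $h$ whose nonnegative integer roots are exactly the $h$ with $\gcd(q(n),r(n+h))\neq1$; I would iterate the extraction over these finitely many $h$. Termination is immediate, since each step deletes a nonconstant factor and so strictly lowers $\deg q+\deg r$. Moreover, because $q$ and $r$ are only ever \emph{divided}, the updated resultant divides the previous one, so extraction never creates new alignments and $\gcd(q(n),r(n+j))$ can only shrink. Hence the loop halts in a state where condition~1 holds for every $j\ge0$.

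The step I expect to be the main obstacle is verifying conditions~2 and~3 for the \emph{final} triple, namely $\gcd(p(n-1),q(n))=1$ and $\gcd(p(n),r(n))=1$. The difficulty is bookkeeping: the factor $p$ is assembled from the shifts $g_k(n-i)$, $0\le i\le h_k-1$, of the gcd's $g_k$ extracted at each step $k$ (gap $h_k$), while it is precisely the next shift $g_k(n-h_k)$ that gets deleted from $r$. To control this I would group the roots of $ab$ into orbits under $n\mapsto n+1$ and process the gaps in decreasing order, tracking within each orbit how roots are apportioned among $p$, $q$ and $r$. The key claim to establish is that any integer shift-coincidence between a surviving root of $q$ (respectively $r$) and a root placed into $p(n-1)$ (respectively $p(n)$) would itself be one of the alignments already removed by some extraction, so no such coincidence can persist. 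Making this orbit analysis precise is the real content of the argument; once it is in place, all three gcd conditions hold simultaneously and the construction yields the required $p,q,r$.
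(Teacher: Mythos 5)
Your overall architecture is the right one: this is exactly the Gosper--Petkov\v{s}ek construction, and in fact the paper itself gives no proof of this lemma at all (it simply cites \cite[Theorem 5.3.1]{A=B}), so the only question is whether your argument stands on its own. The normalization, the telescoping identity $g(n)/g(n-h)=P(n)/P(n-1)$, the use of the resultant to locate the finitely many bad shifts, the termination argument, and the observation that dividing $q$ and $r$ never creates new alignments (which settles condition 1) are all correct. The genuine gap is the one you flag yourself: conditions 2 and 3 are never established, only reduced to a ``key claim'' whose proof is deferred as ``the real content of the argument.''

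Worse, the specific recipe you propose makes that key claim false: processing the shifts in \emph{decreasing} order can violate conditions 2 and 3. Take $f(n)=\frac{n}{(n-1)(n-2)}$, so initially $p=1$, $q(n)=n$, $r(n)=(n-1)(n-2)$, with bad shifts $h=1,2$. Processing $h=2$ first extracts $g(n)=\gcd\bigl(n,(n+1)n\bigr)=n$ and yields $p(n)=n(n-1)$, $q(n)=1$, $r(n)=n-1$; the remaining shift $h=1$ then does nothing since $q=1$. The factorization and condition 1 hold, but $\gcd\bigl(p(n),r(n)\bigr)=n-1\neq1$, so condition 3 fails: the surviving root $1$ of $r$ collides with a root placed into $p$, and the responsible alignment (roots $0$ and $1$ at shift $1$) was never ``removed by some extraction,'' because by the time shift $1$ is processed the root $0$ has already left $q$. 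The fix is to process the shifts in \emph{increasing} order $h_1<h_2<\cdots<h_N$, with $s_i(n)=\gcd\bigl(q_{i-1}(n),r_{i-1}(n+h_i)\bigr)$, $q_i=q_{i-1}/s_i$, $r_i(n)=r_{i-1}(n)/s_i(n-h_i)$. Then the key claim has a short proof: if an irreducible $d(n)$ divided both $p(n)$ and the final $r_N(n)$, then $d(n)\mid s_i(n-j)$ for some $i$ and some $0\le j\le h_i-1$, whence $d(n+j)$ divides $q_{i-1}(n)$, which divides the original numerator $a(n)$, while $d(n+j)\mid b(n+j)$; coprimality of $a,b$ forces $j=h_m$ with $m\le i-1$, and then $d(n+h_m)$ divides both $q_m(n)$ (since $q_{i-1}\mid q_m$) and $r_m(n+h_m)$ (since $r_N\mid r_m$), contradicting $\gcd\bigl(q_m(n),r_m(n+h_m)\bigr)=1$, which holds as soon as step $m$ is completed and persists because later steps only divide. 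Condition 2 is symmetric. Until ``decreasing'' is replaced by ``increasing'' and this argument is supplied, the proof is incomplete.
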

By using Petkov\v{s}ek's algorithm, the following theorems can be proved.

\begin{thm}\label{t3.1}
$$\frac{18}{-8+\pi^2}=\textup{CF}[5n^2+14n+10,-2n^3(2n+3)]$$
\end{thm}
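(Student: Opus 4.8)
The plan is to follow the method of Section~\ref{sec3}: exhibit a hypergeometric solution $y_n$ of the recurrence~(\ref{e1.2}) with $a_n=5n^2+14n+10$ and $b_n=-2n^3(2n+3)$ by Petkov\v{s}ek's algorithm, substitute it into Lemma~\ref{l3.1}, and then evaluate the resulting series in closed form. First I would look for a solution with $r(n):=y_n/y_{n-1}$ rational. Dividing~(\ref{e1.2}) by $y_n$ gives the Riccati-type relation $r(n+1)r(n)=a_nr(n)+b_n$, and a quadratic ansatz $r(n)=\lambda n^2+\mu n+\nu$ balances the degrees. The coefficient of $n^4$ forces $\lambda^2-5\lambda+4=0$, so $\lambda\in\{1,4\}$; the root $\lambda=4$ closes up consistently and yields $r(n)=4n^2+6n=2n(2n+3)$ (the root $\lambda=1$ does not produce a consistent polynomial). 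Normalizing $y_0=1$, this gives $y_n=\prod_{m=1}^n 2m(2m+3)=2^n n!\prod_{m=1}^n(2m+3)$, which is nonzero for all $n\ge0$, so Lemma~\ref{l3.1} applies.

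Next I would substitute into Lemma~\ref{l3.1}. Since $a_0=10=y_1/y_0$, the leading bracket $a_0-y_1/y_0$ vanishes and $y_0^2=1$, so $\textup{CF}[a_n,b_n]=1/S$ with $S=\sum_{k=0}^\infty \frac{(-1)^k}{y_ky_{k+1}}\prod_{i=1}^k b_i$. Writing $\prod_{i=1}^k b_i=(-2)^k(k!)^3\prod_{i=1}^k(2i+3)$ and cancelling the products $\prod(2i+3)$ against $y_ky_{k+1}$ collapses everything to the single hypergeometric series
\[
S=\sum_{k=0}^\infty \frac{6(k+2)(k!)^2}{(2k+5)!}.
\]

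The crux is evaluating $S$. The term $t_k=\frac{6(k+2)(k!)^2}{(2k+5)!}$ is contiguous to $s_k:=\frac{(k!)^2}{(2k+2)!}$: the ratios $t_{k+1}/t_k=\frac{(k+1)^2}{2(k+2)(2k+7)}$ and $s_{k+1}/s_k=\frac{(k+1)^2}{2(k+2)(2k+3)}$ differ only in the factor $2k+7$ versus $2k+3$. This suggests splitting off a telescoping part, so I would posit $t_k=s_k+(g_k-g_{k+1})$ with $g_k=\frac{(k!)^2}{(2k+3)!}P(k)$ for a linear $P$. Clearing denominators turns this into the polynomial identity $(2k+4)(2k+5)P(k)-(k+1)^2P(k+1)=6(k+2)-(2k+3)(2k+4)(2k+5)$, and matching coefficients pins down $P(k)=-\tfrac{8}{3}(k+1)$. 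Summing, the telescoping part contributes $g_0=-\tfrac{4}{9}$ (with $g_k\to0$), while $\sum_{k\ge0}s_k=\sum_{m\ge1}\frac{1}{m^2\binom{2m}{m}}=\frac{\pi^2}{18}$. Hence $S=\frac{\pi^2}{18}-\frac{4}{9}=\frac{\pi^2-8}{18}$, and $\textup{CF}[5n^2+14n+10,-2n^3(2n+3)]=1/S=\frac{18}{-8+\pi^2}$.

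The main obstacle is this last step. Finding $r(n)$ via Petkov\v{s}ek's algorithm is mechanical and applying Lemma~\ref{l3.1} is a substitution, but the series $S$ cannot telescope to a rational number (its value involves $\pi^2$, so it is not Gosper-summable). The genuinely creative work is recognizing the correct ``base'' series $\sum s_k=\pi^2/18$ to subtract and producing the telescoper $g_k$; equivalently, one must evaluate the $\,{}_3F_2\!\left(1,1,1;2,\tfrac72;\tfrac14\right)$ that $S$ represents. I would therefore present the telescoping identity $t_k=s_k+(g_k-g_{k+1})$ explicitly (it is verified by the displayed polynomial identity), so that the reduction to the known central-binomial evaluation of $\pi^2/18$ becomes routine.
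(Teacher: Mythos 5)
Your proof is correct, and its skeleton coincides with the paper's: Petkov\v{s}ek's algorithm yields the same hypergeometric solution (the paper normalizes it as $y_n=2^nn!(2n+3)!!$, three times yours, which only rescales the factor $1/y_0^2$ in Lemma~\ref{l3.1}), and since $a_0=y_1/y_0=10$ the continued fraction reduces in both cases to the reciprocal of the same series. Where you genuinely diverge is in evaluating that series. The paper writes the summand as $\frac{1}{6}\frac{(k!)^2}{(2k)!}\frac{1}{(k+1)(2k+1)(2k+3)(2k+5)}$, performs the partial-fraction split into $\frac{1}{k+1},\frac{1}{2k+1},\frac{1}{2k+3},\frac{1}{2k+5}$ pieces, and quotes four closed forms from Sherman's tables \cite[Eq.3.43,3.44,3.45,3.79]{GS}; each of those involves $\pi\sqrt{3}$, and the $\pi\sqrt{3}$ contributions cancel to leave $(\pi^2-8)/162$. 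You instead decompose the summand as a base term $s_k=\frac{(k!)^2}{(2k+2)!}$ plus an exact telescoper $g_k-g_{k+1}$ with $g_k=-\frac{8}{3}(k+1)\frac{(k!)^2}{(2k+3)!}$ — your polynomial identity does pin down $P(k)=-\frac{8}{3}(k+1)$, as one checks by matching all four coefficients — so the whole evaluation rests on the single classical fact $\sum_{m\ge1}\frac{1}{m^2\binom{2m}{m}}=\frac{\pi^2}{18}$ together with the rational boundary value $g_0=-\frac{4}{9}$, giving $S=\frac{\pi^2-8}{18}$ and hence the claimed value. Your route buys self-containedness (one well-known series instead of four table look-ups) and explains structurally why no $\pi\sqrt{3}$ can survive in the answer; the paper's route is more mechanical once Sherman's tables are admitted, and it is the same technique the paper reuses elsewhere (e.g.\ in the alternative proof of Corollary~\ref{c24}), so it requires no bespoke telescoping identity.
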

\begin{thm}\label{t3.2}
$$\frac{1}{-1+2\log(2)}=\textup{CF}[3n+3,-2n^2]$$
\end{thm}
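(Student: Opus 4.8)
The plan is to obtain the value directly from Lemma~\ref{l3.1}, which only requires one nonvanishing solution $y_n$ of
\[
y_{n+1}=(3n+3)y_n-2n^2y_{n-1},\qquad n=1,2,3,\dots.
\]
Here $a_0=a(0)=3$ and $b_i=-2i^2$, so once such a $y_n$ is produced the value is $\bigl(a_0-\tfrac{y_1}{y_0}\bigr)+\tfrac{1}{y_0^2}\bigl\{\sum_{k\ge0}\tfrac{(-1)^k}{y_ky_{k+1}}\prod_{i=1}^k b_i\bigr\}^{-1}$, and the whole problem splits into (i) exhibiting $y_n$ and (ii) summing the resulting series.

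For step (i) I would apply Petkov\v{s}ek's algorithm (Lemma~\ref{3.2}) to this equation. Writing the sought ratio $R(n)=y_{n+1}/y_n$, the recurrence becomes the Riccati-type relation $R(n)R(n-1)=(3n+3)R(n-1)-2n^2$. A purely linear $R$ is ruled out immediately (the constant term forces an inconsistent slope), so I would search among rational functions of the shape $R(n)=\frac{2n^2+\alpha n+\beta}{n+\gamma}$, the leading coefficient $2$ reflecting the dominant growth rate among the two admissible rates $n$ and $2n$. Matching coefficients pins down $R(n)=\frac{2(n+1)(n+3)}{n+2}$, and forming the product $y_n=y_0\prod_{j=0}^{n-1}R(j)$ yields the closed form $y_n=2^{n-1}(n+2)\,n!$, which one then checks directly against the recurrence and sees is nonzero for every $n\ge0$. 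Since $y_n$ follows the larger growth rate $2^nn!$ rather than $n!$, the series in Lemma~\ref{l3.1} converges absolutely, which is exactly why this branch must be chosen.

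For step (ii), substituting $y_0=1$ and $y_1=3$ makes the leading term $a_0-y_1/y_0$ vanish and gives $1/y_0^2=1$. Using $\prod_{i=1}^k b_i=(-2)^k(k!)^2$ together with $y_ky_{k+1}=2^{2k-1}(k+1)(k+2)(k+3)(k!)^2$, each summand collapses to $\frac{2}{2^k(k+1)(k+2)(k+3)}$, so
\[
\textup{CF}[3n+3,-2n^2]=\left\{2\sum_{k=0}^\infty\frac{1}{2^k(k+1)(k+2)(k+3)}\right\}^{-1}.
\]

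The only genuinely non-mechanical remaining step is evaluating this series. Partial fractions give $\frac{1}{(k+1)(k+2)(k+3)}=\frac{1/2}{k+1}-\frac{1}{k+2}+\frac{1/2}{k+3}$, and each piece is a shifted tail of the expansion $-\log(1-t)=\sum_{m\ge1}t^m/m$ at $t=\tfrac12$; combining the three contributions yields $\sum_{k\ge0}\frac{1}{2^k(k+1)(k+2)(k+3)}=\log2-\tfrac12$. Hence the bracket equals $2\log2-1$ and the continued fraction equals $\frac{1}{2\log2-1}$, as claimed. I expect the main obstacle to be the first step, namely deriving the hypergeometric solution via Petkov\v{s}ek's algorithm; everything downstream is routine bookkeeping followed by a standard logarithmic series evaluation.
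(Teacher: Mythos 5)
Your proposal is correct and follows essentially the same route as the paper: it exhibits the hypergeometric solution (your $y_n=2^{n-1}(n+2)\,n!$ is just $\tfrac12$ times the paper's $y_n=2^n(n+2)\,n!$, an immaterial normalization since the recurrence is linear homogeneous), applies Lemma~\ref{l3.1}, and evaluates the resulting series by the same partial-fraction decomposition $\frac{1}{(k+1)(k+2)(k+3)}=\frac{1/2}{k+1}-\frac{1}{k+2}+\frac{1/2}{k+3}$ against the series for $\log 2$. The only cosmetic difference is that you sketch how Petkov\v{s}ek's algorithm produces the ratio $R(n)=\frac{2(n+1)(n+3)}{n+2}$, whereas the paper omits that derivation and simply verifies the solution.
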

\begin{thm}\label{t3.3}
$$-\frac{1}{\zeta(4)+4\zeta(2)-8}=\textup{CF}[n^4+(n+1)^4+2\left\{n^2+(n+1)^2\right\},-n^8]$$
\end{thm}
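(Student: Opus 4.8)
The plan is to produce, via Petkov\v{s}ek's algorithm, an explicit hypergeometric solution $y_n$ of the recurrence $y_{n+1}=a(n)y_n+b(n)y_{n-1}$ with $a(n)=n^4+(n+1)^4+2\{n^2+(n+1)^2\}$ and $b(n)=-n^8$, and then to feed it into Lemma~{\upshape\ref{l3.1}}. First I would rewrite $a(n)$ in closed form as $a(n)=2n^4+4n^3+10n^2+8n+3$ and set $R(n)=y_n/y_{n-1}$, so that the recurrence becomes the Riccati-type relation $R(n+1)R(n)=a(n)R(n)-n^8$. Examining the leading term forces $R(n)$ to behave like a monic quartic; since $b(n)=-n^8$ naturally pairs with $((n-1)!)^4$, I would try the ansatz $y_n=(n!)^4u_n$, which turns the relation into $(n+1)^4u_{n+1}=a(n)u_n-n^4u_{n-1}$. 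Solving this for a low-degree $u_n$ (equivalently, running the normal-form decomposition of Lemma~{\upshape\ref{3.2}}, with $p(n)=2n+1$, $q(n)=n^4$, $r(n)=1$) should yield $u_n=2n+1$, i.e.\ the hypergeometric solution $y_n=(n!)^4(2n+1)$, whose ratio $R(n)=n^4(2n+1)/(2n-1)$ is manifestly rational. I would confirm this candidate by direct substitution into the recurrence.

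With $y_n=(n!)^4(2n+1)>0$ in hand, I would apply Lemma~{\upshape\ref{l3.1}}. Here $y_0=1$, $y_1=3$ and $a_0=a(0)=3$, so the leading term $a_0-y_1/y_0$ vanishes. Moreover $\prod_{i=1}^k b_i=(-1)^k(k!)^8$ cancels the factor $(-1)^k$, while $y_ky_{k+1}=(k!)^8(k+1)^4(2k+1)(2k+3)$, so the bracketed series collapses to
$$\textup{CF}[a(n),b(n)]=\left\{\sum_{k=0}^\infty\frac{1}{(k+1)^4(2k+1)(2k+3)}\right\}^{-1}.$$
Thus the problem reduces to evaluating this single series in closed form, after first noting that its value is nonzero so that the reciprocal in Lemma~{\upshape\ref{l3.1}} is legitimate.

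The final step is a routine but decisive evaluation. Reindexing with $m=k+1$ gives $\sum_{m=1}^\infty 1/\{m^4(4m^2-1)\}$, and the partial-fraction identity $\frac{1}{m^4(4m^2-1)}=-\frac{4}{m^2}-\frac{1}{m^4}+8\left(\frac{1}{2m-1}-\frac{1}{2m+1}\right)$ separates it into $-4\zeta(2)-\zeta(4)$ plus $8$ times a telescoping sum equal to $1$. Hence the series equals $8-\zeta(4)-4\zeta(2)$, and taking reciprocals yields $\textup{CF}[a(n),b(n)]=1/(8-\zeta(4)-4\zeta(2))=-1/(\zeta(4)+4\zeta(2)-8)$, as claimed. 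I expect the main obstacle to be the first step: correctly carrying out Petkov\v{s}ek's algorithm (or guessing the right ansatz) to extract the closed form $y_n=(n!)^4(2n+1)$. Once this hypergeometric solution is identified, the application of Lemma~{\upshape\ref{l3.1}} and the partial-fraction evaluation are essentially mechanical.
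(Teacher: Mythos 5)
Your proposal is correct and follows essentially the same route as the paper: identify the special solution $y_n=(2n+1)(n!)^4$ (the paper likewise obtains it via Petkov\v{s}ek's algorithm and just verifies it by substitution), feed it into Lemma~{\upshape\ref{l3.1}} to get $\bigl\{\sum_{m\ge 1}1/\{m^4(2m-1)(2m+1)\}\bigr\}^{-1}$, and evaluate by partial fractions and telescoping to reach $8-\zeta(4)-4\zeta(2)$. Your partial-fraction decomposition is in fact written more carefully than the paper's (which contains a sign typo in the $\frac{8}{2k-1}$ term), so no changes are needed.
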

\begin{thm}\label{t3.4}
$$\frac{2}{2\zeta(5)+6\zeta(3)-9}=\textup{CF}[n^5+(n+1)^5+6\left\{n^3+(n+1)^3\right\},-n^{10}]$$
\end{thm}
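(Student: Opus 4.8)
The plan is to produce an explicit hypergeometric solution of the recurrence~(\ref{e1.2}) attached to this continued fraction, feed it into Lemma~\ref{l3.1}, and then evaluate the resulting series in closed form. Here $a_n=n^5+(n+1)^5+6\{n^3+(n+1)^3\}$ and $b_n=-n^{10}$, so the difference equation is $y_{n+1}=a_ny_n-n^{10}y_{n-1}$. Running Petkov\v{s}ek's algorithm (Lemma~\ref{3.2}) on this equation, I expect to recover the hypergeometric term
\[
y_n=(n!)^5\bigl(3n^2+3n+1\bigr),
\]
whose ratio $y_n/y_{n-1}=n^5\,\dfrac{3n^2+3n+1}{3n^2-3n+1}$ is manifestly rational. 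First I would verify directly that this $y_n$ satisfies the recurrence: writing $3n^2+3n+1=(n+1)^3-n^3$, the claim reduces to the polynomial identity $a_n\bigl((n+1)^3-n^3\bigr)=(n+1)^5\bigl((n+2)^3-(n+1)^3\bigr)+n^5\bigl(n^3-(n-1)^3\bigr)$, which is routine to check.

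Next I would apply Lemma~\ref{l3.1}. Since $\prod_{i=1}^k b_i=(-1)^k(k!)^{10}$, the sign in the lemma cancels and the summand becomes $(k!)^{10}/(y_ky_{k+1})$. With $y_0=1$, $y_1=7$ and $a_0=7$ the boundary term $a_0-y_1/y_0$ vanishes, so
\[
\textup{CF}[a_n,b_n]=\left\{\sum_{k=0}^\infty \frac{(k!)^{10}}{y_ky_{k+1}}\right\}^{-1}.
\]
Using $y_ky_{k+1}=(k!)^{10}(k+1)^5(3k^2+3k+1)(3k^2+9k+7)$ and reindexing by $m=k+1$, the series collapses to
\[
S:=\sum_{m=1}^\infty \frac{1}{m^5\,(3m^2-3m+1)(3m^2+3m+1)}=\sum_{m=1}^\infty\frac{1}{m^5(9m^4-3m^2+1)},
\]
so that $\textup{CF}[a_n,b_n]=1/S$.

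The heart of the argument is to show $S=\zeta(5)+3\zeta(3)-\tfrac92$. I would split the summand by partial fractions as
\[
\frac{1}{m^5(9m^4-3m^2+1)}=\frac{1}{m^5}+\frac{3}{m^3}-\frac{27m}{9m^4-3m^2+1},
\]
where the first two terms contribute $\zeta(5)+3\zeta(3)$. For the last term I would set $g(m)=3m^2+3m+1$, observe that $9m^4-3m^2+1=g(m-1)g(m)$ and $g(m)-g(m-1)=6m$, whence $\dfrac{27m}{g(m-1)g(m)}=\dfrac{9}{2}\left(\dfrac{1}{g(m-1)}-\dfrac{1}{g(m)}\right)$ telescopes to $\tfrac92\cdot\tfrac{1}{g(0)}=\tfrac92$. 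Combining gives $S=\zeta(5)+3\zeta(3)-\tfrac92$, and inverting yields $\textup{CF}[a_n,b_n]=\dfrac{1}{\zeta(5)+3\zeta(3)-9/2}=\dfrac{2}{2\zeta(5)+6\zeta(3)-9}$, as claimed.

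The main obstacle is the very first step, locating the hypergeometric solution. The recurrence is degenerate: the ratio $R(n)=y_n/y_{n-1}$ behaves like $\rho\,n^5$ with $\rho$ a \emph{double} root of $\rho^2-2\rho+1=0$, so both independent solutions share the growth $(n!)^5$ and the existence of a closed-form (hypergeometric) solution is not automatic. This is exactly where Petkov\v{s}ek's algorithm is indispensable. Once the candidate $y_n=(n!)^5(3n^2+3n+1)$ is in hand, the remainder—the reduction via Lemma~\ref{l3.1}, the partial-fraction split, and the telescoping that exposes $\zeta(5)$ and $\zeta(3)$—is routine; the one clean observation that makes the tail summable is the factorization $9m^4-3m^2+1=g(m-1)g(m)$ together with $g(m)-g(m-1)=6m$.
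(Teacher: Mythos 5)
Your proposal is correct and follows essentially the same route as the paper: the same Petkov\v{s}ek-derived solution $y_n=(3n^2+3n+1)(n!)^5$, the same application of Lemma~\ref{l3.1}, and the same partial-fraction evaluation (your telescoping term $\frac{27m}{g(m-1)g(m)}$ is exactly the paper's pair $\frac{9}{2(3k^2+3k+1)}-\frac{9}{2(3k^2-3k+1)}$). The only differences are cosmetic: you spell out the verification of the recurrence and the telescoping, which the paper leaves implicit.
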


\begin{thm}\label{t3.5}
$$\frac{2}{2\zeta(5)-2\zeta(3)-1}=\textup{CF}[n^5+(n+1)^5+6\left\{n^3+(n+1)^3\right\}-4(2n+1),-n^{10}]$$
\end{thm}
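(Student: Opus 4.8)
The plan is to treat this as an instance of the scheme of Section~\ref{sec3}: exhibit an explicit nonvanishing solution $y_n$ of the difference equation $y_{n+1}=a_ny_n-n^{10}y_{n-1}$, where $a_n=n^5+(n+1)^5+6\{n^3+(n+1)^3\}-4(2n+1)=2n^5+5n^4+22n^3+28n^2+15n+3$, and then apply Lemma~\ref{l3.1}. Both the shape $\prod_{i=1}^k b_i=(-1)^k(k!)^{10}$ and the degenerate leading root $\lambda=n^5$ of $\lambda^2-2n^5\lambda+n^{10}$ point to running Petkov\v{s}ek's algorithm on the ansatz $y_n=(n!)^5q(n)$ with $q$ a polynomial; substituting reduces the recurrence to $(n+1)^5q(n+1)=a_nq(n)-n^5q(n-1)$. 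A short computation shows that for $q\sim n^j$ the two leading coefficients cancel automatically and the next one equals $(j+6)(j-2)$, so the only available polynomial degree is $j=2$; solving the resulting finite linear system for $q(n)=n^2+\mu n+\nu$ gives $q(n)=n^2+n+1$, that is $y_n=(n!)^5(n^2+n+1)$. I would confirm this directly, e.g. $y_0=1,\ y_1=3,\ y_2=224$ against $a_1=75$, and note $n^2+n+1>0$ so $y_n\neq0$ for all $n\ge 0$, as Lemma~\ref{l3.1} requires.

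Writing $Q(k)=k^2+k+1$, we have $y_k=(k!)^5Q(k)$ and $y_{k+1}=(k+1)^5(k!)^5Q(k+1)$, so each summand in Lemma~\ref{l3.1} collapses:
\[
\frac{(-1)^k}{y_ky_{k+1}}\prod_{i=1}^k b_i=\frac{(k!)^{10}}{y_ky_{k+1}}=\frac{1}{(k+1)^5Q(k)Q(k+1)}.
\]
Since $a_0=3=y_1/y_0$ the bracket $\bigl(a_0-y_1/y_0\bigr)$ vanishes and $y_0=1$, so Lemma~\ref{l3.1} gives $\textup{CF}[a_n,b_n]=S^{-1}$ with $S=\sum_{k\ge0}(k+1)^{-5}Q(k)^{-1}Q(k+1)^{-1}$. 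Reindexing by $m=k+1$ and using $Q(m-1)Q(m)=(m^2-m+1)(m^2+m+1)=m^4+m^2+1$ turns this into the single clean series
\[
S=\sum_{m=1}^{\infty}\frac{1}{m^5(m^4+m^2+1)}.
\]

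The final step evaluates $S$ in closed form. The partial-fraction identity
\[
\frac{1}{m^5(m^4+m^2+1)}=\frac{1}{m^5}-\frac{1}{m^3}+\frac{m}{m^4+m^2+1}
\]
splits $S$ into $\zeta(5)-\zeta(3)$ plus a remainder, and for the remainder I would use $\frac{m}{m^4+m^2+1}=\tfrac12\bigl(\frac{1}{m^2-m+1}-\frac{1}{m^2+m+1}\bigr)$ together with $m^2+m+1=g(m+1)$ for $g(m)=m^2-m+1$, so that this part telescopes to $\tfrac12\,g(1)^{-1}=\tfrac12$. Hence $S=\zeta(5)-\zeta(3)+\tfrac12$ is an explicit rational combination of $\zeta(5)$, $\zeta(3)$ and $1$, and inverting it yields the asserted closed form $\textup{CF}[a_n,b_n]=\bigl(\zeta(5)-\zeta(3)+\tfrac12\bigr)^{-1}$.

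I expect the principal obstacle to be the first step: guaranteeing that Petkov\v{s}ek's algorithm really returns a hypergeometric solution and pinning down $q(n)=n^2+n+1$, since everything downstream is then forced—the term-by-term collapse, the reduction to $\sum m^{-5}(m^4+m^2+1)^{-1}$, and the emergence of $\zeta(5)$ and $\zeta(3)$ alongside a telescoping rational constant. The one structural subtlety, that the recurrence lies in the degenerate Poincar\'e--Perron regime where both independent solutions grow like $(n!)^5$, does not interfere, because Lemma~\ref{l3.1} only needs $y_n\neq0$ and the convergence of the resulting series (whose terms decay like $m^{-9}$).
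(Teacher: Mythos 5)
Your proposal is correct and takes essentially the same route as the paper: the paper also uses the special solution $y_n=(n^2+n+1)(n!)^5$ (stated without derivation, whereas you sketch the Petkov\v{s}ek ansatz $y_n=(n!)^5q(n)$ forcing $\deg q=2$), applies Lemma~\ref{l3.1}, and evaluates the same series $\sum_{m\ge1}\frac{1}{m^5(m^2+m+1)(m^2-m+1)}$ by the identical partial-fraction plus telescoping decomposition to get $\zeta(5)-\zeta(3)+\tfrac{1}{2}$.
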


\begin{thm}\label{t3.6}
$$\frac{64}{64\zeta(5)+176\zeta(3)-273}=\textup{CF}[n^5+(n+1)^5+16\left\{n^3+(n+1)^3\right\}-4(2n+1),-n^{10}]$$
\end{thm}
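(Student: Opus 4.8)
The plan follows the Petkov\v{s}ek route of this section (the same strategy behind Theorems~\ref{t3.3}--\ref{t3.5}): produce a hypergeometric solution of the recurrence, feed it into Lemma~\ref{l3.1}, and reduce the resulting series to a combination of zeta values. Expanding the coefficient gives $a_n=n^5+(n+1)^5+16\{n^3+(n+1)^3\}-4(2n+1)=2n^5+5n^4+42n^3+58n^2+45n+13$ and $b_n=-n^{10}$, so (\ref{e1.2}) reads $y_{n+1}=a_ny_n-n^{10}y_{n-1}$. The characteristic equation $\rho^2=2\rho-1$ has the double root $\rho=1$, so both solutions grow like $(n!)^5$ up to polynomial factors; I would therefore search for a solution of the form $y_n=(n!)^5P(n)$ with $P$ a polynomial, which is exactly the shape Petkov\v{s}ek's algorithm (Lemma~\ref{3.2}) returns. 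Substituting reduces the recurrence to the polynomial identity $(n+1)^5P(n+1)=a_nP(n)-n^5P(n-1)$; matching the leading coefficient of the correction term (which is $32$) forces $d(d+4)=32$, i.e.\ $\deg P=4$. Since the recurrence is symmetric about $n=-\tfrac12$, $P$ is a polynomial in $n(n+1)$, so I would solve under the ansatz $P(n)=(n^2+n+a)(n^2+n+b)$, obtaining $P(n)=\tfrac15(5n^4+10n^3+19n^2+14n+4)$ with $a+b=\tfrac{14}5$ and $ab=\tfrac45$.

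With this $y_n$ (nonzero for all $n\ge0$ since $P>0$ there), Lemma~\ref{l3.1} applies. Using $\prod_{i=1}^k b_i=(-1)^k(k!)^{10}$ and $y_ky_{k+1}=(k!)^{10}(k+1)^5P(k)P(k+1)$, the series collapses to
$$\textup{CF}[a_n,b_n]=\Bigl(a_0-\tfrac{y_1}{y_0}\Bigr)+\frac{1}{y_0^2}\left\{\sum_{k=0}^\infty\frac{1}{(k+1)^5P(k)P(k+1)}\right\}^{-1}.$$
The boundary term vanishes because $a_0=13=P(1)/P(0)=y_1/y_0$, and $y_0^{-2}=\tfrac{25}{16}$, so the entire problem reduces to evaluating $\Sigma\coloneqq\sum_{k\ge0}\bigl[(k+1)^5P(k)P(k+1)\bigr]^{-1}$.

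The evaluation of $\Sigma$ is the technical heart and where I expect the main work to lie. Reindexing $m=k+1$, the summand $\tfrac{1}{m^5P(m-1)P(m)}$ is an \emph{odd} rational function of $m$, since $m\mapsto-m$ swaps $P(m-1)\leftrightarrow P(m)$ and flips $m^5$; hence its partial-fraction expansion at $m=0$ contains only $m^{-5},m^{-3},m^{-1}$. I would read the $m^{-5}$ and $m^{-3}$ coefficients off the Taylor expansion of $1/(P(m-1)P(m))$ at $m=0$, getting the contribution $\tfrac{25}{16}\zeta(5)+\tfrac{275}{64}\zeta(3)$; the essential point is that the $m^{-1}$ coefficient (the $m^4$-Taylor coefficient) vanishes, so no divergent piece arises. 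The residual $r(m)=\tfrac{1}{m^5P(m-1)P(m)}-\tfrac{25/16}{m^5}-\tfrac{275/64}{m^3}$ is then $O(m^{-3})$, and I would telescope it by exhibiting a quadratic $S$ symmetric about $-\tfrac12$ with $r(m)=\tfrac{S(m-1)}{P(m-1)}-\tfrac{S(m)}{P(m)}$. Because $P(m)-P(m-1)=4m(m^2+\tfrac75)$, matching the $m^5$ and $m^3$ coefficients pins down $S$, and the $m^1$ coefficient is then satisfied automatically (the same vanishing that killed $m^{-1}$). Summing telescopically yields $\sum_{m\ge1}r(m)=S(0)/P(0)=-\tfrac{6825}{1024}$.

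Assembling, $\Sigma=\tfrac{25}{16}\zeta(5)+\tfrac{275}{64}\zeta(3)-\tfrac{6825}{1024}$, whence $\textup{CF}[a_n,b_n]=\tfrac{25}{16}\Sigma^{-1}=\dfrac{64}{64\zeta(5)+176\zeta(3)-273}$, as claimed. The only feature that makes this heavier than the quadratic cases of Theorems~\ref{t3.4}--\ref{t3.5} is that $P$ now has degree four with irrational roots $a,b=\tfrac{7\pm\sqrt{29}}5$, so the partial-fraction and telescoping bookkeeping carries more terms; but since only the symmetric functions $a+b$ and $ab$ enter, everything remains rational, and the real obstacle is simply confirming that the telescoper $S$ closes up the residual sum exactly.
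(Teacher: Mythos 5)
Your proposal is correct and takes essentially the same route as the paper: the same hypergeometric solution $y_n=(5n^4+10n^3+19n^2+14n+4)(n!)^5$ (your monic normalization $P=\tfrac15(5n^4+10n^3+19n^2+14n+4)$ is an immaterial rescaling), Lemma~\ref{l3.1} with the vanishing boundary term $a_0-y_1/y_0=0$, and the same partial-fraction-plus-telescoping evaluation, your value $\Sigma=\tfrac{25}{16}\zeta(5)+\tfrac{275}{64}\zeta(3)-\tfrac{6825}{1024}$ being exactly $25$ times the paper's sum. The only addition is that you sketch how the ansatz (degree matching $d(d+4)=32$ and symmetry about $n=-\tfrac12$) produces $P$, a derivation the paper deliberately omits.
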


\begin{thm}\label{t3.7}
$$\frac{1}{\zeta(7)-4\zeta(3)+4}=\textup{CF}[n^7+(n+1)^7+8\left\{n^5+(n+1)^5\right\}-8\left\{n^3+(n+1)^3\right\}+4(2n+1),-n^{14}]$$
\end{thm}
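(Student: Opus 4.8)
Since this theorem sits in Section~\ref{sec3}, the plan is to use Lemma~\ref{l3.1}: find one explicit nonvanishing solution $y_n$ of the associated difference equation, plug it into the series formula, and evaluate the resulting series in closed form. Here the equation (\ref{e1.2}) reads
\[
y_{n+1}=a_n\,y_n-n^{14}y_{n-1},\qquad a_n=P(n)+P(n+1),\quad P(x)=x^7+8x^5-8x^3+4x,
\]
so that $b_n=-n^{14}=-(n^7)^2$. Because the trailing coefficient is a perfect square of $n^7$ and $a_n\sim 2n^7$, the two fundamental solutions have the same leading growth $(n!)^7$ (a double characteristic root at the $n^7$ scale); this is precisely the regime where Petkov\v{s}ek's algorithm can be expected to return one hypergeometric solution.

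First I would search for that solution. Guided by the $(n!)^7$ growth, I would substitute the ansatz $y_n=(n!)^7 w(n)$ with $w$ a polynomial; dividing (\ref{e1.2}) by $(n!)^7$ collapses it to the polynomial recurrence
\[
(n+1)^7 w(n+1)=a_n\,w(n)-n^7 w(n-1),
\]
which Petkov\v{s}ek's normal form (Lemma~\ref{3.2}) turns into a finite search. Matching coefficients, I expect the minimal solution to be quadratic, namely $w(n)=2n^2+2n+1$, i.e. $y_n=(n!)^7\,(2n^2+2n+1)$, equivalently the hypergeometric ratio $y_n/y_{n-1}=n^7\,(2n^2+2n+1)/(2n^2-2n+1)$. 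Verifying that this $w$ satisfies the displayed recurrence is a routine but essential coefficient comparison.

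Next I would feed $y_n$ into Lemma~\ref{l3.1}. Two simplifications should occur: $y_0=1$ and $y_1=5=a_0$, so the leading term $a_0-y_1/y_0$ vanishes; and $\prod_{i=1}^{k}b_i=(-1)^k(k!)^{14}$ exactly cancels the factor $(-1)^k$ and the factorial part of $y_k y_{k+1}$. Writing $f(k)=2k^2+2k+1=k^2+(k+1)^2$, so that $y_k y_{k+1}=(k!)^{14}(k+1)^7 f(k)f(k+1)$, the continued fraction reduces to $\textup{CF}[a_n,b_n]=1/S$ with
\[
S=\sum_{k=0}^{\infty}\frac{1}{(k+1)^7\,f(k)\,f(k+1)}.
\]

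The main obstacle is then the evaluation of $S$, and the creative step I would look for is the factorization identity $f(k)f(k+1)=1+4(k+1)^4$ (checkable since $f(k)=(1+2\epsilon^2)-2\epsilon$, $f(k+1)=(1+2\epsilon^2)+2\epsilon$ with $\epsilon=k+1$). Re-indexing by $m=k+1$ gives $S=\sum_{m\ge1} m^{-7}(4m^4+1)^{-1}$, which I would peel apart by the telescoping partial fractions $\tfrac{1}{m^7(4m^4+1)}=\tfrac{1}{m^7}-\tfrac{4}{m^3(4m^4+1)}$ and $\tfrac{1}{m^3(4m^4+1)}=\tfrac{1}{m^3}-\tfrac{4m}{4m^4+1}$, together with $\tfrac{4m}{4m^4+1}=\tfrac{1}{f(m-1)}-\tfrac{1}{f(m)}$, whose sum telescopes to $1/f(0)=1$. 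This yields $\sum_{m\ge1}\tfrac{1}{m^3(4m^4+1)}=\zeta(3)-1$ and hence $S=\zeta(7)-4(\zeta(3)-1)=\zeta(7)-4\zeta(3)+4$, giving $\textup{CF}[a_n,b_n]=\bigl(\zeta(7)-4\zeta(3)+4\bigr)^{-1}$. I expect the genuinely non-routine points to be the Petkov\v{s}ek step (fixing the degree and coefficients of $w$) and spotting the identity $f(k)f(k+1)=1+4(k+1)^4$; once these are in hand the remaining manipulations are elementary telescoping.
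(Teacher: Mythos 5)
Your proposal is correct and follows essentially the same route as the paper: the same hypergeometric solution $y_n=(2n^2+2n+1)(n!)^7$ found via Petkov\v{s}ek's algorithm, the same application of Lemma~\ref{l3.1} yielding the series $\sum_{k\ge1}\frac{1}{k^7(2k^2+2k+1)(2k^2-2k+1)}$, and the same partial-fraction/telescoping evaluation giving $\zeta(7)-4\zeta(3)+4$. The extra details you supply (the $(n!)^7$ ansatz reducing to a polynomial recurrence, and the factorization $f(k)f(k+1)=4(k+1)^4+1$) are just explicit versions of steps the paper leaves as routine.
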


Derivation of the solution of (\upshape\ref{e1.2}) using Petkov\v{s}ek's algorithm is omitted because it is easy to verify that a given sequence $y_n$ satisfies (\upshape\ref{e1.2}).

\begin{proof}[Proof of Theorem~{\upshape\ref{t3.1}}]
Substitute $a_n=5n^2+14n+10$ and $b_n=-2n^3(2n+3)$ for (\upshape\ref{e1.2}). Then it has the following solution:
$$y_n=2^nn!(2n+3)!!.$$
Thus,
\begin{align*}
    \frac{(-1)^k}{y_ky_{k+1}}\prod_{i=1}^kb_i&=\frac{(-1)^k}{2^kk!(2k+3)!!2^{k+1}(k+1)!(2k+5)!!}\cdot(-2)^k(k!)^3\frac{(2k+3)!!}{3}\\
    &=\frac{1}{6}\frac{1}{(k+1)(2k+1)(2k+3)(2k+5)}\frac{(k!)^2}{(2k)!}.
\end{align*}
Thus,
\begin{align*}
    \sum_{k=0}^\infty\frac{(-1)^k}{y_ky_{k+1}}\prod_{i=1}^kb_i&=\frac{1}{6}\sum_{k=0}^\infty\frac{(k!)^2}{(2k)!}\frac{1}{(k+1)(2k+1)(2k+3)(2k+5)}\\
    &=\frac{1}{6}\sum_{k=0}^\infty\frac{(k!)^2}{(2k)!}\left(-\frac{1}{3}\cdot\frac{1}{k+1}+\frac{1}{4}\cdot\frac{1}{2k+1}+\frac{1}{2}\cdot\frac{1}{2k+3}-\frac{1}{12}\cdot\frac{1}{2k+5}\right)\\
    &=-\frac{1}{18}\left(\frac{4\pi\sqrt{3}}{9}-\frac{\pi^2}{9}\right)+\frac{1}{24}\cdot\frac{2\pi\sqrt{3}}{9}+\frac{1}{12}\left(\frac{14\pi\sqrt{3}}{9}-8\right)-\frac{1}{72}\left(\frac{74\pi\sqrt{3}}{9}-\frac{400}{9}\right)\\&=\frac{\pi^2-8}{162},
\end{align*}
where we use ~\cite[Eq.3.43,3.44,3.45,3.79]{GS}. Therefore from Lemma~{\upshape\ref{l3.1}}, we see that
$$\textup{CF}[5n^2+14n+10,-2n^3(2n+3)]=\frac{1}{9}\cdot\frac{162}{-8+\pi^2}=\frac{18}{-8+\pi^2}.$$
\end{proof}
\begin{proof}[Proof of Theorem~{\upshape\ref{t3.2}}]
Substitute $a_n=3n+3$ and $b_n=-2n^2$ for (\upshape\ref{e1.2}). Then it has the following solution:
$$y_n=2^n(n+2)n!.$$
Thus,
$$\frac{(-1)^k}{y_ky_{k+1}}\prod_{i=1}^kb_i=\frac{(-1)^k}{2^k(k+2)k!\cdot2^{k+1}(k+3)(k+1)!}\cdot (-2)^k(k!)^2=\frac{1}{2}\frac{2^{-k}}{(k+1)(k+2)(k+3)}.$$
Therefore by Lemma~{\upshape\ref{l3.1}}, we get
\begin{align*}
\text{CF}[3n+3,-2n^2]&=\frac{1}{4}\left\{\frac{1}{2}\sum_{k=0}^\infty2^{-k}\left(\frac{1}{2}\cdot\frac{1}{k+1}-\frac{1}{k+2}+\frac{1}{2}\cdot\frac{1}{k+3}\right)\right\}^{-1}\\
&=\frac{1}{4}\left\{\frac{1}{2}\sum_{k=1}^\infty\frac{2^{-k}}{k}-2\left(\sum_{k=1}^\infty\frac{2^{-k}}{k}-\frac{1}{2}\right)+2\left(\sum_{k=1}^\infty\frac{2^{-k}}{k}-\frac{5}{8}\right)\right\}^{-1}\\
&=\frac{1}{-1+2\log(2)}.
\end{align*}
\end{proof}
\begin{proof}[Proof of Theorem~{\upshape\ref{t3.3}}]
Substitute $a_n=n^4+(n+1)^4+2\{n^2+(n+1)^2\}$ and $b_n=-n^8$ for (\upshape\ref{e1.2}). Then it has the following solution:
$$y_n=(2n+1)(n!)^4.$$
Therefore by Lemma~{\upshape\ref{l3.1}}, we get
\begin{align*}
\text{CF}[n^4+(n+1)^4+2\left\{n^2+(n+1)^2\right\},-n^8]&=\left\{\sum_{k=1}^\infty\frac{1}{k^4(2k+1)(2k-1)}\right\}^{-1}\\
&=\left\{\sum_{k=1}^\infty\left(-\frac{1}{k^4}-\frac{4}{k^2}-\frac{8}{2k+1}-\frac{8}{2k-1}\right)\right\}^{-1}\\
&=-\frac{1}{\zeta(4)+4\zeta(2)-8}.
\end{align*}
\end{proof}
\begin{proof}[Proof of Theorem~{\upshape\ref{t3.4}}]
Substitute $a_n=n^5+(n+1)^5+6\left\{n^3+(n+1)^3\right\}$ and $b_n=-n^{10}$ for (\upshape\ref{e1.2}). Then it has the following solution:
$$y_n=(3n^2+3n+1)(n!)^5.$$
Therefore by Lemma~{\upshape\ref{l3.1}}, we get
\begin{align*}
&\text{CF}[n^5+(n+1)^5+6\left\{n^3+(n+1)^3\right\},-n^{10}]\\
&=\left\{\sum_{k=1}^\infty\frac{1}{k^5(3k^2+3k+1)(3k^2-k+1)}\right\}^{-1}\\
&=\left\{\sum_{k=1}^\infty\left(\frac{1}{k^5}+\frac{3}{k^3}+\frac{9}{2(3k^2+3k+1)}-\frac{9}{2(3k^2-3k+1)}\right)\right\}^{-1}\\
&=\frac{2}{2\zeta(5)+6\zeta(3)-9}.
\end{align*}
\end{proof}
\begin{proof}[Proof of Theorem~{\upshape\ref{t3.5}}]
Substitute $a_n=n^5+(n+1)^5+6\left\{n^3+(n+1)^3\right\}-4(2n+1)$ and $b_n=-n^{10}$ for (\upshape\ref{e1.2}). Then it has the following solution:
$$y_n=(n^2+n+1)(n!)^5.$$
Therefore by Lemma~{\upshape\ref{l3.1}}, we get
\begin{align*}
&\text{CF}[n^5+(n+1)^5+6\left\{n^3+(n+1)^3\right\}-4(2n+1),-n^{10}]\\
&=\left\{\sum_{k=1}^\infty\frac{1}{k^5(k^2+k+1)(k^2-k+1)}\right\}^{-1}\\
&=\left\{\sum_{k=1}^\infty\left(\frac{1}{k^5}-\frac{1}{k^3}-\frac{1}{2(k^2+k+1)}+\frac{1}{2(k^2-k+1)}\right)\right\}^{-1}\\
&=\frac{2}{2\zeta(5)-2\zeta(3)-1}.
\end{align*}
\end{proof}
\begin{proof}[Proof of Theorem~{\upshape\ref{t3.6}}]
Substitute $a_n=n^5+(n+1)^5+16\left\{n^3+(n+1)^3\right\}-4(2n+1)$ and $b_n=-n^{10}$ for (\upshape\ref{e1.2}). Then it has the following solution:
$$y_n=(5n^4+10n^3+19n^2+14n+4)(n!)^5. $$
Therefore by Lemma~{\upshape\ref{l3.1}}, we get
\begin{eqnarray*}
&&\text{CF}[n^5+(n+1)^5+16\left\{n^3+(n+1)^3\right\}-4(2n+1),-n^{10}]\\
&&=\frac{1}{16}\left\{\sum_{k=1}^\infty\frac{1}{k^5(5k^4+10k^3+19k^2+14k+4)(5k^4-10k^3+19k^2-14k+4)}\right\}^{-1}\\
&&=\left\{\sum_{k=1}^\infty\left(\frac{1}{k^5}+\frac{11}{4k^3}-\frac{110k^2-110k+273}{16(5k^4-10k^3+19k^2-14k+4)}+\frac{110k^2+110k+273}{16(5k^4+10k^3+19k^2+14k+4)}\right)\right\}^{-1}\\
&&=\frac{64}{64\zeta(5)+176\zeta(3)-273}.
\end{eqnarray*}
\end{proof}
\begin{proof}[Proof of Theorem~{\upshape\ref{t3.7}}]
Substitute $a_n=n^7+(n+1)^7+8\left\{n^5+(n+1)^5\right\}-8\left\{n^3+(n+1)^3\right\}+4(2n+1)$ and $b_n=-n^{14}$ for (\upshape\ref{e1.2}). Then it has the following solution:
$$y_n=(2n^2+2n+1)(n!)^7.$$
Therefore by Lemma~{\upshape\ref{l3.1}}, we get
\begin{align*}
&\text{CF}[n^7+(n+1)^7+8\left\{n^5+(n+1)^5\right\}-8\left\{n^3+(n+1)^3\right\}+4(2n+1),-n^{14}]\\
&=\left\{\sum_{k=1}^\infty\frac{1}{k^7(2k^2+2k+1)(2k^2-2k+1)}\right\}^{-1}\\
&=\left\{\sum_{k=1}^\infty\left(\frac{1}{k^7}-\frac{4}{k^3}-\frac{4}{2k^2+2k+1}+\frac{4}{2k^2-2k+1}\right)\right\}^{-1}\\
&=\frac{1}{\zeta(7)-4\zeta(3)+4}.
\end{align*}
\end{proof}

Corollaries 2.23--31 can also be proved in the same way.
However, the way of finding the value of the series in the proofs of Corollaries~{\upshape\ref{c24}},~{\upshape\ref{c26}},~{\upshape\ref{c27}} and~{\upshape\ref{c30}} is more complicated than the others\footnote{I express my sincere thanks to Taiki Watanabe for proving the four corollaries in this way.}.

For example, another proof of Corollary~{\upshape\ref{c24}} is given below.

\begin{proof}[Proof of Corollary~{\upshape\ref{c24}}]
Substitute $a_n=3n^2+3n+1$ and $b_n=-n^3(2n-3)$ for (\upshape\ref{e1.2}). Then it has the following solution:
$$y_n=(n^2+3n-2)n!(2n-3)!!.$$
Here assume that $(-1)!!=1,(-3)!!=-1$. Thus we have
\begin{align*}
\frac{(-1)^k}{y_ky_{k+1}}\prod_{i=1}^kb_i
&=\frac{(-1)^k}{(k^2+3k-2)k!(2k-3)!!(k^2+5k+2)(k+1)!(2k-1)!!}\cdot (-1)^{k+1}(k!)^3(2k-3)!!\\
&=-\frac{1}{(k+1)(k^2+3k-2)(k^2+5k+2)}\frac{(k!)^22^k}{(2k)!}.
\end{align*}
Thus, we get
\begin{align*}
&\sum_{k=0}^\infty\frac{(-1)^k}{y_k y_{k+1}}\prod_{i=1}^k b_i\\
&=-\sum_{k=0}^\infty\frac{(k!)^22^k}{(2k)!}\frac{1}{(k+1)(k^2+3k-2)(k^2+5k+2)}\\
&=-\sum_{k=0}^\infty\frac{(k!)^2}{(2k)!}\frac{2^k}{(k+1)(2k+4)}\left(\frac{1}{k^2+3k-2}-\frac{1}{k^2+5k+2}\right)\\
&=\frac{1}{8}-\sum_{k=0}^\infty\frac{((k+1)!)^2}{(2k+2)!}\frac{2^k}{(k+2)(k+3)(k^2+5k+2)}+\sum_{k=0}^\infty\frac{(k!)^2}{(2k)!}\frac{2^{k-1}}{(k+1)(k+2)(k^2+5k+2)}\\
&=\frac{1}{8}-\sum_{k=0}^\infty \frac{(k!)^2}{(2k)!}\frac{2^{k-1}}{(k+2)(k^2+5k+2)}\left(\frac{k+1}{(k+3)(2k+1)}-\frac{1}{k+1}\right)\\
&=\frac{1}{8}+\frac{1}{2}\sum_{k=0}^\infty \frac{(k!)^2}{(2k)!}\frac{2^k}{(k+1)(k+2)(k+3)(2k+1)}\\
&=\frac{1}{8}+\frac{1}{2}\sum_{k=0}^\infty \frac{(k!)^22^k}{(2k)!}\left(-\frac{1}{2}\cdot \frac{1}{k+1}+\frac{1}{3}\cdot \frac{1}{k+2}-\frac{1}{10}\cdot \frac{1}{k+3}+\frac{8}{15}\cdot \frac{1}{2k+1}\right)\\
&=\frac{1}{8}+\frac{1}{2}\left\{-\frac{1}{2}\left(\pi-\frac{\pi^2}{8}\right)+\frac{1}{3}\left(\frac{5}{2}\pi-\frac{3}{8}\pi^2-3\right)-\frac{1}{10}\left(6\pi-\frac{15}{16}\pi^2-\frac{35}{4}\right)+\frac{8}{15}\cdot \frac{\pi}{2}\right\}\\
&=\frac{4+\pi^2}{64},
\end{align*}
where we use \cite[Eq. 3.58, 3.94, 3.95 and 3.96]{GS}. Therefore from Lemma~{\upshape\ref{l3.1}}, we see that
\begin{eqnarray*}
\text{CF}[3n^2+3n+1,-n^3(2n-3)]=\frac{1}{4}\cdot \frac{64}{4+\pi^2}=\frac{16}{4+\pi^2}.
\end{eqnarray*}
\end{proof}

\bmhead{Acknowledgments}
I would like to thank Professors Hiroyuki Ochiai and Hideyuki Ishi for their great help in writing this paper.

%%\bibliography{sn-bibliography}% common bib file
%% if required, the content of .bbl file can be included here once bbl is generated
%%\input sn-article.bbl
%% BioMed_Central_Bib_Style_v1.01

%%============================================================================%%
%% while using chicago reference style, both abbreviated and expanded form of %%
%% author name format is acceptable. Refer below example for expanded form    %%
%%============================================================================%%

%%  author		= "{Cameron, Deborah}", - single author
%%  author		= "{Saito, Yukio} and {Hyuga, Hiroyuki}", - double author 

%%======================================%%
%% Example for author names with suffix %%
%%======================================%%

%%  author		= "{Price, R. A. Jr} and {Curry, N. {III}} and McCann, K. E. and 
%%					Fielding, J. L. and {Abercrombie, E. Jr}",

\end{document}